\DeclareMathAlphabet{\mymathbb}{U}{BOONDOX-ds}{m}{n}
\numberwithin{equation}{section}
\newtheorem{thm}{Theorem}[section]
\newtheorem{defi}{Definition}[section]
\newtheorem{assume}{Assumption}[section]
\newtheorem{prop}[thm]{Proposition}
\newtheorem{lemma}[thm]{Lemma}
\newtheorem{ex}{Example}[section]
\newtheorem{remark}{Remark}[section]
\begin{document}

\author{Kiseop Lee\thanks{Department of Statistics, Purdue University, West Lafayette, IN, United States, kiseop@purdue.edu}, Seongje Lim \thanks{Department of Mathematical Sciences, Seoul National University, 1, Gwanak-ro, Gwanak-gu, Seoul, Republic of Korea, tjdwpdla@snu.ac.kr} and Hyungbin Park \thanks{Department of Mathematical Sciences, Seoul National University, 1, Gwanak-ro, Gwanak-gu, Seoul, Republic of Korea, hyungbin@snu.ac.kr, hyungbin2015@gmail.com}   }

\title{Option pricing under path-dependent stock models}
\maketitle
\abstract{ 
	
This paper studies how to price and hedge options under stock models given as  a path-dependent SDE solution.
When 
the path-dependent SDE coefficients have  Fr\'{e}chet derivatives,
an option price is differentiable with respect to time and the path, and is given as a solution to the path-dependent PDE. 
This can be regarded as  a path-dependent version of the Feynman-Kac formula.
As a byproduct, we obtain the differentiability of  path-dependent SDE solutions and  the SDE representation of their derivatives. 
In addition,  we provide formulas for Greeks with 
path-dependent coefficient perturbations.
A stock model having coefficients with time integration forms of paths is covered as an example. 
}

\section{Introduction}
\label{intro}

\subsection{Overview}

This paper aims to investigate option prices under a stock  model given as a solution to an SDE
 with path-dependent coefficients having  Fr\'{e}chet derivatives.  
We show that 
an option price is differentiable with respect to time and   path, and is given as a solution to a path-dependent PDE. 
Using this result, 
PDE representations and hedging portfolios of options under path-dependent stock models are investigated.
In addition, we derive option Greeks with path-dependent coefficient perturbations.

One of main purposes of this paper is to
provide a PDE representation of 
option prices under path-dependent stock models.
This is a path-dependent version of the Feynman-Kac formula and can be regarded as  extended results of  \cite{PengWang2016bsde}.
We consider a market model  in which the logarithm of stock price follows a path-dependent SDE. The stock price process $S=(S_t)_{t\ge0}$ is given as $S_{t} = S_{0}e^{X_{t}}$, where
\begin{align} \label{model risk-neutrala}
dX_t = b_t(X)\,dt + \sigma_t(X)\,dW_t, \, X_0=0,
\end{align} 
for a non-anticipative functional $b, \sigma$
under the risk-neutral measure.
A path-dependent option price is $v_t(X_t) = e^{-r(T-t)}u_T(X_t)$ where $u_t$ is defined as 
\begin{align} \label{model option price1}
u_t(X_t):= e^{-r(T-t)} \mathbb{E}\left[g(X_{T}) | \mathcal{F}_{t} \right],\;0\le t\le T\,,
\end{align}
for a constant short rate $r$ and a non-anticipative functional $g.$
We show that $u$ has first- and second- order vertical derivatives and a horizontal derivative, denoted as $D_{x}u,$ $D_{xx}u$ and $D_tu,$ respectively, and $u$ satisfies a path-dependent PDE:
\begin{equation}
\begin{aligned} \label{main PPDE_1}
D_{t}u_t(\gamma_{t}) &+ b_t(\gamma_{t}) D_{x}u_t(\gamma_{t}) + \frac{1}{2}\sigma_t^{2}(\gamma_{t}) D_{xx}u_t(\gamma_{t}) = 0, \quad &&\textnormal{for} \quad &&\gamma_{t} \in D([0, t], \mathbb{R}^d)  , \\
u_T(\gamma_{T})      &= g(\gamma_{T}), \quad &&\textnormal{for} \quad &&\gamma_{T} \in D([0, T], \mathbb{R}^d).
\end{aligned}
\end{equation}
The converse  also holds true. The precise statement is given in Theorem \ref{main theorem}.

\cite{PengWang2016bsde} studied a nonlinear Feynman-Kac formula for non Markovian BSDEs as in \eqref{main PPDE_1} with a standard Brownian motion $X_t=W_t$ (i.e. $b \equiv 0, \sigma \equiv 1$).
It is not straightforward to extend their results to an Ito process $X$ having path dependent drift and volatility terms. The reason is as follows.
As a main technique, they used 
the property that a Brownian motion with initial value $x_0$ is the parallel translation of a standard Brownian motion by $x_0$. 
More precisely, the Brownian increment is given as $W^{\gamma_t + \delta_t} - W^{\gamma_t} =\delta_t$ where $W^{\gamma}(s) = \gamma(s)\mathbbm{1}_{[0,t]}(s) + (\gamma(t) + W(s) - W(t))\mathbbm{1}_{( t, T ]}(u)$ with a standard Brownain motion $W$. 
However, we cannot expect that this property holds for a path-dependent SDE solution, thus it cannot be extended directly to a path-dependent SDE solution.
This paper is an extension of the above paper to an Ito process $X$ having path dependent drift and volatility.
To overcome the technique in Peng and Wang, we adopt Fr\'{e}chet derivative to 
estimate the term as $X^{\gamma_t + \delta_t} - X^{\gamma_t}$.

As another application, 
we conduct a sensitivity analysis of option prices with respect to the coefficient perturbations of stock. 
For the perturbed coefficients
$b^\epsilon$ and $\sigma^\epsilon$  with the perturbation parameter $\epsilon,$
let $X^\epsilon$ be a solution to  
\begin{equation} \label{greek stock model1}
\begin{aligned}
dX^{\epsilon}_{t} &= b^{\epsilon}_t(X^{\epsilon})\,dt + \sigma^{\epsilon}_{t}(X^{\epsilon})\,dW_{t}, \quad X^{\epsilon}_{0} =0.
\end{aligned}
\end{equation}
At time $0,$ the perturbed  option price is 
\begin{equation}
  \begin{aligned}
    v_0^\epsilon(X_0)= e^{-r^\epsilon T} \mathbb{E}\left[g(X_{T}^\epsilon)  \right]  
  \end{aligned}
\end{equation}
for the perturbed short rate $r^\epsilon.$
We show that 
\begin{equation} 
\begin{aligned} 
&\lim_{\epsilon \rightarrow 0} \frac{1}{\epsilon} (v_0^\epsilon(X_0)-v_0(X_0))\\
=\;&	 -u_0(X_{0})e^{-rT}\dot{r}T
	- e^{-rT} \mathbb{E}\Big[\int_{0}^{T} D_{x}u_s(X_s) \dot{b}_{s}(X) +  D_{xx}u_s(X_s) \dot{\sigma}_{s}(X)\sigma_{s}(X) \,ds\Big] 
	\end{aligned}
\end{equation}	
where
$\dot{b},\dot{\sigma},\dot{r}$ are the partial derivatives of $b,\sigma,r$, respectively, of $\epsilon$ evaluated at $\epsilon=0$.  
Precise assumptions regarding perturbations and the results are given in Section~\ref{subsec:G}. 

The rest of this paper is structured as follows. 
The related literature is reviewed in
Section \ref{relatedarticle}.
We explain the basic concepts of functional It\^{o} calculus
in Section \ref{sec:2}.
In Section \ref{sec:d}, we
investigate the differentiability with respect to the initial path of path-dependent SDEs and state the main result of this paper.
In Section \ref{sec:price}, 
our main results are applied to option pricing theory for an exponential path-dependent stock price model.
We show that the option price is a $\mathbb{C}^{1, 2}$ solution to a path-dependent partial differential equation 
and provide sensitivity formulas. 
Section \ref{sec:ex} presents an example for a stock price model.
Finally, the last section summarizes
the paper. The proofs of the main results are presented
in the appendices.

\subsection{Literature review}\label{relatedarticle}

Recently, path-dependent SDEs have been actively researched. 
\cite{Dupire2019} extended Itô calculus to functionals of the current path of a process. To obtain an Itô formula, they expressed the differential of the functional in terms of partial derivatives. Moreover, they developed an extension of the Feynman-Kac formula to the functional case and an explicit expression of the integrand in the martingale representation theorem. \cite{Cont2010a} derived a change of variable formula for non-anticipative functionals. Their results led to functional extensions of the Itô formula for a large class of stochastic processes.
\cite{Cont2010} showed that the functional derivative admits a suitable extension to the space of square-integrable martingales. 
This extension defines a weak derivative viewed as a non-anticipative lifting of the Malliavin derivative.
These results led to a constructive martingale representation formula for Itô processes.

The Feynman-Kac formula has been extended to non-Markovian cases. 
\cite{Peng2010}
studied a type of path-dependent quasi-linear parabolic PDEs.
These PDEs are formulated through a classical BSDE in which the terminal values and generators
are general functions of Brownian motion paths.
As a result, they established the nonlinear Feynman-Kac formula for a general non-Markovian BSDE.
\cite{Ekren2014} proposed a notion of viscosity solutions for path-dependent semi-linear parabolic PDEs. 
This can be viewed as viscosity solutions of non-Markovian backward SDEs. They proved the existence, uniqueness, stability, and comparison principle for the viscosity solutions. 
\cite{bouchard2021approximate} introduced a notion of approximate viscosity solution for nonlinear path-dependent PDEs including the Hamilton-Jacobi-Bellman type equations. They investigated the existence, stability, comparison principle and regularity of the solution to the PDEs.

Estimating Greeks 
is an important topic in mathematical finance. 
\cite{Fournie1999} presents a probabilistic method for the numerical computations of Greeks in finance. 
They applied the integration-by-parts formula developed in the Malliavin calculus to exotic European options in the framework of the Black and Scholes model.  Their method was compared to the Monte Carlo finite difference approach and turned out to be efficient in the case of discontinuous payoff functionals.

\section{Functional It\^{o} calculus}\label{sec:2}
In this section, we introduce basic notions of functional It\^{o} calculus. We fix a probability space $(\Omega, \mathcal{F}, (\mathcal{F}_{t})_{t \ge 0}, \mathbb{P})$ and  time maturity $T \in \mathbb{R}_{+}$. Let $D([0, T], \mathbb{R}^d)$ be a collection of all functions with c\`{a}dl\`{a}g paths from $[0, t]$ to $\mathbb{R}^d.$ We also define $\tilde{D}^{d} = \bigcup_{t \in [0, T]} D([0, t], \mathbb{R}^d)$. 
We define $\mathbb{D}^d $ as a collection of all $d$-dimensional stochastic processes with c\`{a}dl\`{a}g paths. (i.e., $X \in \mathbb{D}^d$ means that $X$ is a stochastic process defined on $[0, T]$ and $X(\omega) \in D([0, T], \mathbb{R}^d)$ for $\omega\in \Omega$).

We write $\eta^{t}(\cdot) = \eta(\cdot \wedge t) \in D([0, T], \mathbb{R}^d)$ to denote the stopped path of $\eta$ at time $t \in [0, T]$ for $\eta \in D([0, T], \mathbb{R}^d)$. We denote the value of path $\gamma_{t} \in D([0, t] , \mathbb{R}^d)$ at time $s \in [0, t]$ as $\gamma_{t}(s) \in \mathbb{R}^{d}$.

For each $t \in [0, T]$, let $\gamma_{t} \in D([0, t], \mathbb{R}^d)$. We introduce several notations
\begin{align}
  \left\lVert \gamma_{t} \right\rVert_{s} & := \sup_{u \in [0, s]}\left\lVert \gamma(u) \right\rVert :=  \sup_{u \in [0, s]}\max_{i = 1, \cdots d}\left\lvert (\gamma(u)_1, \gamma(u)_2, \cdots \gamma(u)_d) \right\rvert, \\ 
  \quad \left\lVert \gamma_{t} \right\rVert_{s_0, s_1} & := \sup_{u \in [s_0, s_1]}\left\lVert \gamma(u) \right\rVert, 
  \quad \left\lVert \gamma_{t} \right\rVert := \left\lVert \gamma_{t} \right\rVert_{t},
\end{align}
for $s, s_0, s_1 \in [0, t]$. The map $ \lVert \cdot \rVert = \lVert \cdot \rVert_{t}$ is regarded as a norm in $D([0, t], \mathbb{R}^d)$. We use the same notation in $\mathbb{D}^{d}$, 
\begin{align}\label{ftnl notation}
  \left\lVert X \right\rVert_{s} := \sup_{u \in [0, s]}\left\lVert X(u) \right\rVert, 
  \; \left\lVert X \right\rVert_{s_0, s_1} := \sup_{u \in [s_0, s_1]}\left\lVert X(u) \right\rVert, 
  \; \left\lVert X \right\rVert := \left\lVert X \right\rVert_{T}.
\end{align}

Let $F : [0, T] \times D([0, T], \mathbb{R}^d) \rightarrow \mathbb{R} $. We say that $F$ is a non-anticipative functional if for any $\eta, \gamma \in D([0, t], \mathbb{R}^d)$ satisfying $\eta^{s} = \gamma^{s}$, the equality $F(s, \eta) = F (s, \gamma)$ holds. This means that the value of $F(s, \gamma)$ depends only on the $[0, s]$ part of path $\gamma$.
The non-anticipative functional $F$ can be regarded as an operator $\tilde{F}$ from $\tilde{D}^{d}$ to $\mathbb{R}$ by defining $\tilde{F}(\gamma) = F(t, \gamma^{T})$ for $\gamma \in D([0, t], \mathbb{R}^d)$ where $\gamma^{T}(s) = \gamma(s) \mathds{1}_{[0, t]}(s) + \gamma(t)\mathds{1}_{[t, T]}(s)$.
We can denote $F_{t}(\eta) = F(t, \eta) \in \mathbb{R}$ for a non-anticipative functional $F$ if there is no ambiguity.
Next, we introduce the notions of horizontal and vertical derivatives, which indicate the sensitivity of a functional to each perturbation of a stopped path $(t, \eta^{t})$.
\begin{defi}\label{ftnl ito defi}
  Let $F :[0, T] \times D([0, T], \mathbb{R}^d) \rightarrow \mathbb{R}$ be a non-anticipative functional.
  \begin{enumerate}
    \item [(i)] We say that $F$ is left continuous if
    \begin{enumerate}
      \item  for each $t \in [0, T]$, $F(t, \cdot) : (D([0, T], \mathbb{R}^d), \lVert \cdot \rVert_{T} ) \rightarrow \mathbb{R}$ is continuous,
      \item  for any $(t, \eta) \in [0, T] \times D([0, T], \mathbb{R}^d)$ and $\epsilon > 0$, there exists $\delta > 0$ if $(t', \eta') \in [0, T] \times D([0, T], \mathbb{R}^d)$ satisfy $t' < t$ and $\lVert \eta - \eta' \rVert _{t'} + \lvert t - t' \rvert < \delta$, then $\lvert F(t, \eta) - F(t', \eta') \rvert < \epsilon$.
    \end{enumerate}
    \item [(ii)] We say that $F$ is boundedness-preserving if for any $K \in \mathbb{R}_{+}$ and $t < T$, there exists $C > 0$ such that if $s \le t$ and $\lVert \eta \rVert_{s} < K$ for $\eta \in D([0, T], \mathbb{R})$, then $\lvert F(s, \eta) \rvert \le C$.
    \item [(iii)] $F$ is said to be horizontally differentiable at $(t, \eta)\in[0, T] \times D([0, T], \mathbb{R}^d)$ if the limit
    \begin{align}
      D_tF(t, \eta) = \lim_{\delta \rightarrow 0+} \frac{\left\lvert F(t+\delta, \eta^{t} ) - F(t, \eta^{t}) \right\rvert }{\delta}  
    \end{align}
    exists. We call $D_tF(t, \eta)$ the horizontal derivative $D_tF(t, \eta)$ of $F$ at $(t, \eta)$.
    \item [(iv)] $F$ is said to be vertically differentiable at $(t, \eta)\in[0, T] \times D([0, T], \mathbb{R}^d)$
    if the limit
    \begin{align}
      \partial_i F(t, \eta) = \lim_{\delta \rightarrow 0+ }\frac{\left\lvert F(t, \eta^{t} + \delta e_{i} \mathds{1}_{[t, T]}) - F(t, \eta^{t} ) \right\rvert }{\delta}
    \end{align}
    exists for $i = 1, \cdots , d$ where $e_{i}$ is i-th basis. \\
     We call $D_x F(t, \eta) := (\partial_{1}F(t, \eta), \cdots, \partial_{1}F(t, \eta)) \in \mathbb{R}^{1\times d}$ the vertical derivative $D_x F(t, \eta)$ of $F$ at $(t, \eta)$.
  \end{enumerate}
\end{defi}

If $F$ is horizontally differentiable at all $(t, \eta)$, the map $DF : (t, \eta) \mapsto DF(t, \eta)$ is a non-anticipative functional. The same holds for the vertical derivative. Similarly, we can define the second-order vertical derivative $D_{xx}F = D_x(D_xF)$ of $F$.
\begin{defi} \label{ftnl regular def}
  We say $F$ is a regular functional if there exist $D_tF, D_xF, D_{xx}F$ for all $(t, \eta) \in [0, T] \times D([0, T], \mathbb{R}^d)$, and we have
  \begin{enumerate}
    \item [(i)] For each $t \in [0, T]$, $D_tF(t, \cdot ) : (D([0, T], \mathbb{R}^d),\lVert \cdot \rVert_{T} ) \rightarrow \mathbb{R}$ is continuous,
    \item [(ii)] $D_xF, D_{xx}F$ are left continuous,
    \item [(iii)] and $D_tF, D_xF, D_{xx}F$ are boundedness-preserving.
  \end{enumerate}
\end{defi}

From \cite{Cont2016FuntioncalItoCalculus}, we get the continuous version of the functional It\^{o} formula.
\begin{thm}  \label{ftnl Ito formula}
  (Functional It\^{o} formula : continuous case) Let $X$ be a continuous $d$-dimensional semimartingale and $F$ be a regular functional. For any $t \in [0, T)$,
  \begin{align}
    F(t, X_{t}) - F(0, X_{0}) &= \int_{0}^{t}D_t F(u, X_{u})\,du + \int_{0}^{t} D_x F(u, X_{u})\,dX(u) \\ 
    &+\frac{1}{2}\int_{0}^{t} \textnormal{tr}( D_{xx}F(u, X_{u})\,d[X](u) ) \quad a.s.
  \end{align}
\end{thm}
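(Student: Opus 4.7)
The plan is to follow the classical Dupire--Cont--Fourni\'e strategy: approximate the continuous semimartingale $X$ by piecewise constant c\`adl\`ag paths along a refining partition, telescope $F(t, X_t) - F(0, X_0)$ along that partition, split each telescope increment into a ``horizontal'' piece (time advances, path frozen) and a ``vertical'' piece (time fixed, endpoint updated), and pass to the limit using the regularity assumed in Definition~\ref{ftnl regular def}.

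First I would localize. Since $X$ has continuous paths, the stopping times $\tau_K := \inf\{s : \lVert X \rVert_s \geq K\} \wedge t$ satisfy $\tau_K \uparrow t$ almost surely as $K \to \infty$. By the boundedness-preserving property of $D_tF, D_xF, D_{xx}F$, all three integrands in the claimed identity are bounded on $\{s \le \tau_K\}$, so it is enough to prove the formula up to $\tau_K$ and then let $K \to \infty$. Fix a sequence of partitions $\pi_n = \{0 = t_0^n < \cdots < t_{k_n}^n = t\}$ with $|\pi_n| \to 0$, and define the discretized path $X^n$ by $X^n(s) = X(t_i^n)$ for $s \in [t_i^n, t_{i+1}^n)$ and $X^n(t) = X(t)$. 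Continuity of $X$ then gives $X^n \to X$ uniformly on $[0, t]$ a.s.

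Next I would telescope
\begin{equation}
F(t, X^n_t) - F(0, X^n_0) = \sum_{i=0}^{k_n - 1} \Big[ F(t_{i+1}^n, X^n_{t_{i+1}^n}) - F(t_i^n, X^n_{t_i^n}) \Big],
\end{equation}
and insert the intermediate functional $F(t_{i+1}^n, \widetilde X^{n,i})$, where $\widetilde X^{n,i}$ denotes the stopped path $X^n_{t_i^n}$ extended constantly on $[t_i^n, t_{i+1}^n]$ at the value $X(t_i^n)$. The horizontal piece $F(t_{i+1}^n, \widetilde X^{n,i}) - F(t_i^n, X^n_{t_i^n})$ involves only a time shift on a path that is constant after $t_i^n$, so by the left-continuity in Definition~\ref{ftnl ito defi}(i)(b) and the fundamental theorem of calculus it equals $\int_{t_i^n}^{t_{i+1}^n} D_t F(s, \widetilde X^{n,i})\, ds$. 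The vertical piece $F(t_{i+1}^n, X^n_{t_{i+1}^n}) - F(t_{i+1}^n, \widetilde X^{n,i})$ concerns only an endpoint perturbation of size $\Delta X_i := X(t_{i+1}^n) - X(t_i^n)$, so componentwise Taylor expansion in the direction of each basis vector $e_j$ yields
\begin{equation}
D_x F(t_{i+1}^n, \widetilde X^{n,i})\, \Delta X_i + \tfrac{1}{2}\, \textnormal{tr}\!\Big( D_{xx} F(t_{i+1}^n, \widetilde X^{n,i})\, \Delta X_i (\Delta X_i)^{\!\top} \Big) + R_{n,i},
\end{equation}
with a remainder $R_{n,i}$ that is $o(|\Delta X_i|^2)$ uniformly on the localized set by the left-continuity of $D_{xx}F$.

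Finally I would pass to the limit as $n \to \infty$. The horizontal sum converges a.s.\ to $\int_0^t D_t F(u, X_u)\, du$ by continuity of $D_t F(\cdot, \eta)$ in its first variable and the uniform convergence $\widetilde X^{n,i} \to X$; the first-order vertical sum converges in probability to $\int_0^t D_x F(u, X_u)\, dX_u$ by the standard Riemann-sum approximation theorem for It\^o integrals, invoking the left continuity and boundedness of $D_xF$; and the quadratic sum converges to $\tfrac{1}{2}\int_0^t \textnormal{tr}(D_{xx} F(u, X_u)\, d[X]_u)$ by the convergence of the realized quadratic covariation along partitions of vanishing mesh, again using left continuity of $D_{xx}F$. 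The main obstacle is the second of these limits: one must show that the integrand $D_xF$, which is only \emph{left}-continuous in the path variable, evaluated along the perturbed approximating paths $\widetilde X^{n,i}$, defines Riemann sums that converge to the correct stochastic integral. This is precisely the point at which the combination of left continuity and the boundedness-preserving property in Definition~\ref{ftnl regular def} is essential, as it permits dominated convergence for the predictable integrand and the uniform control of the remainder $\sum_i R_{n,i}$.
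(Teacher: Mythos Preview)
Your sketch is the standard Dupire--Cont--Fourni\'e argument and is correct in outline. Note, however, that the paper does not prove this theorem at all: it simply quotes the result from \cite{Cont2016FuntioncalItoCalculus} (see the sentence immediately preceding the statement), so there is no ``paper's own proof'' to compare against---your proposal is essentially a summary of the proof in that reference.
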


\section{Differentiability of stochastic flow}\label{sec:d}
In this section, we investigate the differentiability with respect to the initial path of a path-dependent SDE.
The result is the path-dependent version of Theorem~V.39 of \cite{protter2005stochastic}. \cite{protter2005stochastic} deals with the differentiability with respect to an initial data point of an SDE when the coefficients of the SDE are a function depending on a finite point. The condition of this theorem is that the coefficient function of the SDE is differentiable. Therefore, we introduce the Fr\'{e}chet derivative of a non-anticipative functional corresponding to the differentiability of the function.

Let $V$ and $W$ be two normed vector spaces. We denote the family of linear bounded operators from $V$ to $W$ with the operator norm by $L(V, W)$. The Fr\'{e}chet derivative of an operator between two normed vector spaces can be defined. 

\begin{defi} \label{main Fre def}
  Let $V$, $W$ be vector spaces with norm $||\cdot||_V$ and $||\cdot||_W$.
  \begin{enumerate}
    \item [(i)] We say that an operator $F : V \rightarrow W $ has a Fr\'{e}chet derivative at $v \in V$ if there exists a bounded linear operator $DF(v) : V \rightarrow W $ such that
    \begin{align}
      \lim_{\left\lVert u \right\rVert_{V} \rightarrow 0} \frac{\left\lVert F(v + u) - F(v) - DF(v)(u) \right\rVert_{W}}{\left\lVert u \right\rVert_{V}} = 0.
    \end{align}
  
    \item [(ii)] Suppose that an operator $F : V \rightarrow W$ has a Fr\'{e}chet derivative for all $v \in V$. We say that $F$ has a second-order Fr\'{e}chet derivative at $v \in V$ if $DF : V \rightarrow L(V, W)$ has a Fr\'{e}chet derivative at $v \in V$. This means that there exists a bounded linear operator $D^{2}F(v) : V \rightarrow L(V, W)$ such that
    \begin{align}
      \lim_{\left\lVert u \right\rVert_{V} \rightarrow 0} \frac{\left\lVert DF(v + u) - DF(v) - D^{2}F(v)(u) \right\rVert_{L(V, W) }}{\left\lVert u \right\rVert_{V} } = 0.
    \end{align}
  \end{enumerate}
  \end{defi}

An operator $F : V \rightarrow W $ is said to have a Fr\'{e}chet derivative if $F$ has a Fr\'{e}chet derivative for all $v \in V$. We say that the operator $F$ has a second Fr\'{e}chet derivative as in a similar way. For simplicity, we may sometimes denote $D^{2}F(v)(w)$ as $D^{2}F(v,w)$. 

Before applying the above definitions to non-anticipative functionals, we define some notions following \cite{protter2005stochastic}. Fix a maturity $T \in \mathbb{R}_{+}$. A non-anticipative functional $F : [0, T] \times D([0, T], \mathbb{R}^d) \rightarrow \mathbb{R}^{N}$ is functional Lipschitz continuous if for each $t \in [0, T]$, there exists a constant $K(t)$ such that 
\begin{align}
  \left\lVert F_t(\eta) - F_t(\delta) \right\rVert  \le K(t) \left\lVert \eta - \delta \right\rVert_t 
\end{align}
for any $\eta, \delta \in D([0, T], \mathbb{R}^d)$.
There are some conditions for a non-anticipative functional $F$ to apply the Fr\'{e}chet derivative.
\begin{assume} \label{main F ass}
  Let $ F : [0, T] \times D([0, T], \mathbb{R}^d) \rightarrow \mathbb{R}$ be a non-anticipative functional. The functional $F$ satisfies the following: 
   \begin{enumerate}
    \item [(i)] for each $\eta \in D([0, T], \mathbb{R}^d)$, the map $F(\eta) : [0, T] \rightarrow \mathbb{R}^{N}$ is a c\`{a}dl\`{a}g path,
    \item [(ii)] there exists an increasing sequence of open sets $A_k$ such that $\bigcup_k A_k = D([0, T], \mathbb{R}^d)$ and $F$ is functional Lipschitz continuous on each $A_k$.
   \end{enumerate}
\end{assume}
The first condition means that we can regard the functional $F : [0, T] \times D([0, T], \mathbb{R}^d) \rightarrow \mathbb{R}^{N}$ as the operator $F : D([0, T], \mathbb{R}^d) \rightarrow D([0, T], \mathbb{R}^{N} )$. The second is a necessary condition for the existence of the Fr\'{e}chet derivative and the solution to the SDE. We refer to satisfying the second condition as a local functional Lipschitz continuity. 

Let $Z$ be a $N$-dimensional semimartingale with $Z_{0} = 0$ and $F : D([0, T], \mathbb{R}^d) \rightarrow D([0, T], \mathbb{R}^{d \times N} )$ be a non-anticipative functional satisfying Assumption \ref{main F ass} and having the Fr\'{e}chet derivative $DF$. Consider a solution $(X^{x},DX^{x})$ to the $N$-dimensional SDE with a non-anticipative functional coefficient 
\begin{align}
  X^{x}_{t}  & = x + \int_{0}^{t}F_{s-}(X^{x})\,dZ_{s}, \label{main Fre SDEX}           \\
  DX^{x}_{t} & = 1 + \int_{0}^{t}DF_{s-}(X^{x})(DX^{x})\,dZ_{s}, \label{main Fre sdeDX}
\end{align}
for $x \in \mathbb{R}^{d}$. Since $F$ and $DF$ are locally functional Lipschitz continuous, we know that the solutions of \eqref{main Fre SDEX} and \eqref{main Fre sdeDX} uniquely exist.

The following theorems deal with the differentiability of a solution to an SDE with a non-anticipative functional coefficient. Under the good condition of the coefficient $F$, a solution to the SDE can be differentiable, and the differential is written as a solution to an SDE. This means we can apply the property of SDE to the differential. We can regard this proposition as a non-anticipative functional version of Theorem~V.39 of \cite{protter2005stochastic}.

\begin{prop} \label{main Fre prop1}
  Let $F : D([0, T], \mathbb{R}^d) \rightarrow D([0, T], \mathbb{R}^{d \times N} )$ be a non-anticipative functional with Assumption~\ref{main F ass}. Assume that $F$ has the Fr\'{e}chet derivative $DF$ which is locally Lipschitz continuous in the operator norm, i.e., for any $\eta \in D([0, T], \mathbb{R}^d)$, there exist constant $\epsilon, K(\eta) > 0$ such that if $\lVert \eta - \tilde{\eta} \rVert < \epsilon$ then
 \begin{align}\label{main Fre prop1 ass}
    \left\lVert DF(\eta)(\delta) - DF(\tilde{\eta})(\delta) \right\rVert \le K(\eta) \left\lVert \eta-\tilde{\eta} \right\rVert \left\lVert \delta \right\rVert,
  \end{align}
holds for all $\delta \in D([0, T], \mathbb{R}^d)$. Suppose that $X$ is a solution to SDE \eqref{main Fre SDEX}. Then, the following conditions are satisfied.
  \begin{enumerate}
    \item [(i)] For almost everywhere $\omega \in \Omega$, there exists a function $X^{x}(t, \omega)$ that is continuous differentiable in $\mathbb{R}^{d}$ with respect to $x$.
    \item [(ii)] Let $DX^{x}(t, \omega) = \frac{\partial X^{x}}{\partial x} (t, \omega)$. Then, $DX^{x}$ is the solution to SDE \eqref{main Fre sdeDX}.
  \end{enumerate}
\end{prop}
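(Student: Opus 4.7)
The plan is to adapt the proof of Theorem~V.39 of \cite{protter2005stochastic} to the path-dependent, Fr\'{e}chet-derivative setting, combining Burkholder-Davis-Gundy (BDG) inequalities and Gronwall's lemma for SDEs driven by $Z$, a first-order Fr\'{e}chet-Taylor expansion of $F$, and Kolmogorov's continuity criterion applied to the two-parameter family indexed by $(x,h)$. Since $DF$ is locally Lipschitz in the operator norm by \eqref{main Fre prop1 ass}, the linearized SDE \eqref{main Fre sdeDX} admits a unique strong solution $DX^{x}$; the entire task is then to show that this $DX^{x}$ coincides with the pathwise derivative $\partial X^{x}/\partial x$ for almost every $\omega$.

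First, I would localize by stopping times adapted to the open sets $A_{k}$ from Assumption~\ref{main F ass}(ii), so that $F$ and $DF$ may be assumed globally functional Lipschitz and bounded. Writing $\Delta^{x,h}_{t} := X^{x+h}_{t} - X^{x}_{t}$ for $h \in \mathbb{R}^{d}\setminus\{0\}$, subtracting the two copies of \eqref{main Fre SDEX} and applying BDG together with Gronwall to the resulting linear SDE yields, for any $p \ge 2$, a bound of the form $\mathbb{E}\|\Delta^{x,h}\|_{T}^{p} \le C|h|^{p}$. Consequently the scaled increment $\xi^{x,h} := \Delta^{x,h}/|h|$ is bounded in $L^{p}$ uniformly in small $h$.

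Next, I would expand $F_{s-}(X^{x+h}) - F_{s-}(X^{x}) = DF_{s-}(X^{x})(\Delta^{x,h}) + R^{x,h}_{s-}$, where the Fr\'{e}chet remainder satisfies $\|R^{x,h}\|_{T} \le \varepsilon(h)\|\Delta^{x,h}\|_{T}$ with $\varepsilon(h) \to 0$. Dividing by $|h|$ and comparing with \eqref{main Fre sdeDX} contracted against the direction $h/|h|$ produces a linear SDE for the difference $\xi^{x,h} - DX^{x}(h/|h|)$ driven by $DF_{s-}(X^{x})$ with forcing term $|h|^{-1}R^{x,h}$. BDG, Gronwall and the preceding $L^{p}$ bound then give $\|\xi^{x,h} - DX^{x}(h/|h|)\|_{T} \to 0$ in $L^{p}$ as $h \to 0$, which establishes existence of the directional derivative at every fixed $x$ and identifies it with the solution of \eqref{main Fre sdeDX}.

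The main obstacle is upgrading this $L^{p}$, fixed-$x$ convergence to the almost-sure existence of a jointly continuous pathwise derivative $\partial X^{x}/\partial x$ required by part~(i). Following Protter's strategy, I would establish a joint $L^{p}$-H\"older estimate $\mathbb{E}\|\xi^{x,h} - \xi^{x',h'}\|_{T}^{p} \le C(|x-x'|+|h-h'|)^{p\alpha}$ for some $\alpha>0$ and $p$ sufficiently large, then invoke Kolmogorov's continuity criterion on $\mathbb{R}^{d}\times\mathbb{R}^{d}$ to obtain a jointly continuous version whose limit as $h \to 0$ delivers the continuous derivative required in the statement. The delicate point is controlling the Fr\'{e}chet remainder $R^{x,h}$ uniformly in both $x$ and $h$; this is precisely where the local operator-norm Lipschitz bound \eqref{main Fre prop1 ass} on $DF$ becomes essential, since it allows the remainder estimate to be made uniformly small over a neighborhood of $x$ and preserves H\"older regularity under the second round of SDE-comparison arguments.
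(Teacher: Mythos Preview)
Your outline is sound, but it follows a genuinely different route from the paper's proof. After the same localization step, the paper proceeds by discretization rather than by direct analysis of the difference quotient: it introduces approximations $X^{x,(n)},DX^{x,(n)}$ obtained by freezing the coefficient along random partitions $\pi_{n}$ tending to the identity (Lemma~\ref{Fre Lem2}), verifies $\partial X^{x,(n)}/\partial x = DX^{x,(n)}$ by explicit induction over the partition intervals (between partition points the path is frozen, so the Fr\'echet chain rule reduces to an ordinary one), passes to the limit in $S^{p}$, and finally concludes via the distribution-theoretic fact that a continuous function with continuous distributional derivative is classically $C^{1}$. Continuity of $x\mapsto DX^{x}$ in $S^{p}$, needed for that last step, is obtained separately from \eqref{main Fre prop1 ass} and Kolmogorov. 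Your approach instead compares $\xi^{x,h}$ with $DX^{x}$ through a first-order Fr\'echet expansion and upgrades to pathwise differentiability by a two-parameter Kolmogorov argument on $(x,h)$; this is closer in spirit to Kunita's treatment of stochastic flows. It avoids the discretization machinery and the appeal to distribution theory, at the price of the joint H\"older estimate you correctly flag as the delicate step; the paper's route sidesteps that estimate entirely by pushing the differentiation onto the finite-dimensional approximants.

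One technical remark: since $Z$ is a general semimartingale, the ``BDG plus Gronwall'' combination you invoke should be understood in the Emery-norm/$\mathcal{H}^{p}$ framework that the paper uses (Lemma~\ref{Fre Lem1} and Emery's inequality), not the bare Brownian-motion version; this is what actually delivers the $S^{p}$ bounds $\|X^{x}-X^{y}\|_{S^{p}}\le C|x-y|$ and the corresponding estimate for $DX^{x}-DX^{y}$.
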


\begin{prop} \label{main Fre 2 prop2}
  Let $F : D([0, T], \mathbb{R}^d) \rightarrow D([0, T], \mathbb{R}^{d \times N} )$ be a non-anticipative functional with Assumption~\ref{main F ass}. Assume that $F$ has the second-order Fr\'{e}chet derivative $D^{2}F$, and that $DF$ and $D^{2}F$ are locally Lipschitz continuous in the operator norm, i.e., for each $\eta \in D([0, T], \mathbb{R}^d)$, there exist constants $\epsilon > 0$, $K(\eta)$ such that if $\lVert \eta - \tilde{\eta} \rVert \le \epsilon$ then
  \begin{equation}
  \begin{aligned}\label{main prop2 ass}
    \left\lVert DF(\eta)(\gamma) - DF(\tilde{\eta})(\gamma) \right\rVert      & \le K(\eta) \left\lVert \eta-\tilde{\eta} \right\rVert \left\lVert \gamma \right\rVert ,               \\
    \left\lVert D^{2}F(\eta, \gamma)(\delta) - D^{2}F(\tilde{\eta}, \gamma)(\delta) \right\rVert & \le K(\eta) \left\lVert \eta-\tilde{\eta} \right\rVert \left\lVert \gamma \right\rVert \left\lVert \delta \right\rVert,
  \end{aligned}
  \end{equation}
  for all $\gamma$, $\delta \in D([0, T], \mathbb{R}^d)$. Also, let $D^{2}X^{x}$ be a solution to the SDE for each $x \in \mathbb{R}^{d}$,
  \begin{align}\label{main Fre sdeD2X}
    D^{2}X^{x}_{t} = \int_{0}^{t}D^{2}F_{s-}(X^{x}, DX^{x})(DX^{x}) + DF_{s-}(X^{x})(D^{2}X^{x}) \,dZ_{s}.
  \end{align}
  Then, $D^{2}X^{x}$ exists and is unique. Moreover, we obtain the following.
  \begin{enumerate}
    \item [(i)] For almost everywhere $\omega \in \Omega$, there exists a function $DX^{x}(t, \omega)$ that is continuous differentiable in $\mathbb{R}^{d}$ with respect to $x$.
    \item [(ii)] The differential $\frac{\partial DX^{x}}{\partial x} (t, \omega)$ of $DX^{x}$ is the solution to SDE \eqref{main Fre sdeD2X}.
  \end{enumerate}
\end{prop}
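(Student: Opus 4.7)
The plan is to mirror the strategy used for Proposition~\ref{main Fre prop1}, iterated one level up: first establish existence and uniqueness of the candidate derivative process $D^2X^x$ as the unique solution of a linear SDE with known coefficients, and then show that the difference quotients of $DX^x$ in the parameter $x$ converge to this process, using the Lipschitz bounds \eqref{main prop2 ass} together with Proposition~\ref{main Fre prop1}.

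For step one, observe that SDE~\eqref{main Fre sdeD2X} is \emph{linear} in the unknown $D^2X^x$ once the processes $X^x$ and $DX^x$ (which exist by Proposition~\ref{main Fre prop1}) are treated as given. Indeed, the term $D^2F_{s-}(X^x,DX^x)(DX^x)$ is a fixed c\`{a}dl\`{a}g process, while the coefficient $DF_{s-}(X^x)$ acts linearly and is, by Assumption~\ref{main F ass} and local Lipschitz continuity, functional Lipschitz along the path of $X^x$. Existence and uniqueness of $D^2X^x$ therefore follow from standard SDE theory for functional Lipschitz coefficients (localizing on the sets $A_k$ from Assumption~\ref{main F ass} and pasting), exactly as in the treatment of $DX^x$ in Proposition~\ref{main Fre prop1}.

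For step two, I would show that for almost every $\omega$ and $h$ near $0$,
\begin{align}
R^{x,h}_t := \frac{DX^{x+h}_t - DX^x_t}{h} - D^2X^x_t \longrightarrow 0 \qquad \text{as } h\to 0,
\end{align}
and that the convergence is locally uniform in $x$. Subtracting the SDEs for $DX^{x+h}$, $DX^x$, and $D^2X^x$, and using the algebraic decomposition
\begin{align}
\frac{DF_{s-}(X^{x+h})(DX^{x+h}) - DF_{s-}(X^x)(DX^x)}{h}
&= \frac{DF_{s-}(X^{x+h}) - DF_{s-}(X^x)}{h}(DX^{x+h}) \\
&\quad + DF_{s-}(X^x)\Big(\tfrac{DX^{x+h}-DX^x}{h}\Big),
\end{align}
one identifies the dominant terms with $D^2F_{s-}(X^x,DX^x)(DX^x)$ and $DF_{s-}(X^x)(D^2X^x)$ respectively. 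The first is handled by writing
\begin{align}
\frac{DF_{s-}(X^{x+h}) - DF_{s-}(X^x)}{h}(DX^{x+h})
= D^2F_{s-}\!\Big(X^x,\tfrac{X^{x+h}-X^x}{h}\Big)(DX^{x+h}) + \epsilon^{x,h}_{s},
\end{align}
where $\epsilon^{x,h}_s$ is the Taylor remainder controlled by \eqref{main prop2 ass}; since $(X^{x+h}-X^x)/h\to DX^x$ and $DX^{x+h}\to DX^x$ by Proposition~\ref{main Fre prop1}, and since $D^2F$ is bilinear and Lipschitz, this piece converges to $D^2F_{s-}(X^x,DX^x)(DX^x)$. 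The second piece reproduces the linear term $DF_{s-}(X^x)(R^{x,h})$ on the right. Applying a stochastic Gronwall argument (after localization on $A_k$ and a standard stopping-time truncation of $\|X^x\|$, $\|DX^x\|$, and $\|D^2X^x\|$) yields $R^{x,h}\to 0$, proving that $\partial DX^x/\partial x = D^2X^x$. Continuous dependence of $D^2X^x$ on $x$ then follows by a parallel Gronwall estimate on $D^2X^{x+h}-D^2X^x$, exploiting continuity of $X^x$, $DX^x$ in $x$ and local Lipschitz continuity of $DF$ and $D^2F$.

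The main obstacle will be the Taylor remainder step: we must bound $\epsilon^{x,h}_s$ uniformly in $s$ on a set of large probability, and simultaneously control $\|(X^{x+h}-X^x)/h - DX^x\|$ in the bilinear slot of $D^2F$. This requires carefully combining the second local Lipschitz estimate in \eqref{main prop2 ass} with the $L^p$-convergence of difference quotients from Proposition~\ref{main Fre prop1}, all restricted to stopping-time truncations. Once that remainder estimate is in place, Gronwall closes the argument routinely.
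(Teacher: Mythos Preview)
Your proposal is correct in outline, but it takes a genuinely different route from the paper. The paper omits the proof of Proposition~\ref{main Fre 2 prop2}, stating only that it is the generalization of the proof of Proposition~\ref{main Fre prop1} to second order. That proof, following Protter's Theorem~V.39, works by (i) reducing to globally Lipschitz coefficients, (ii) proving $S^p$-continuity of the candidate derivative in $x$ and invoking Kolmogorov's criterion, (iii) approximating the SDE by solutions along random partitions (Lemma~\ref{Fre Lem2}) and verifying the derivative identity for the approximants by finite induction over partition intervals, and (iv) passing to the limit via a distribution-theoretic argument to upgrade to classical pathwise differentiability. Your approach bypasses the partition approximation and distribution theory entirely, working directly with the difference-quotient SDE and closing by a stochastic Gronwall estimate after a Taylor expansion controlled by \eqref{main prop2 ass}. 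This is more self-contained and avoids the machinery of Lemma~\ref{Fre Lem2}, at the cost of heavier bookkeeping with stopping-time truncations and the bilinear remainder. One point you should make explicit: Gronwall gives $S^p$-convergence of $R^{x,h}$ for each fixed $x$, but the statement asks for differentiability for a.e.\ $\omega$ jointly in $x\in\mathbb{R}^d$; the paper's route handles this via Kolmogorov on $D^2X^x$ together with the distribution-theory step (Step~4 in the proof of Proposition~\ref{main Fre prop1}), and in your approach you still need that selection argument to pass from $L^p$ to pathwise differentiability on a common full-measure set.
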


The above theorems state when the initial value of the SDE is a point. 
We can easily extend the theorems to cases in which the initial value of SDE is a path. Let $(\Omega, \mathbb{Q}, \mathcal{F}, (\mathcal{F}_t )_{t \ge 0} )$ be the filtered probability space, where $\mathcal{F}_t$ is a sigma-algebra generated by an $N$-dimensional Wiener process $W_s$ for $0 \le s \le t$. Suppose that the stochastic process $X^{\gamma_{t}}$ follows an SDE for each $t \in [0, T]$ and $\gamma_{t} \in D([0, t], \mathbb{R}^d)$,
\begin{equation} \label{main model sdeX}
  \begin{aligned}
    X^{\gamma_{t}}(s) & = \gamma_{t}(t) + \int^{s}_{t}b_r(X^{\gamma_{t}})\,dr +\int^{s}_{t}\sigma_r(X^{\gamma_{t}})\,dW_r, \quad &&t \le s \le T, \\
    X^{\gamma_{t}}(s) & = \gamma_{t}(s), \quad &&0 \le s \le t, 
  \end{aligned}
  \end{equation}
  where $b$ and $\sigma$ are non-anticipative functionals.
  The proofs of the following theorems are given at the end of Appendix \ref{Appendix:Fre}.

  \begin{thm}\label{main SDE diff}
    Let $b : D([0, T], \mathbb{R}^d) \rightarrow D([0, T], \mathbb{R}^d)$, $\sigma : D([0, T], \mathbb{R}^d) \rightarrow D([0, T], \mathbb{R}^{d \times N} )$ be a non-anticipative functional with Assumption~\ref{main F ass}. Assume that $b, \sigma$ have the Fr\'{e}chet derivatives $Db, D\sigma$ satisfying \eqref{main Fre prop1 ass}. Let $X^{\gamma_{t}}$ be a solution to SDE \eqref{main model sdeX} and $D_{\delta_t}X^{\gamma_{t}}$ be a solution to the SDE 
    \begin{equation}\label{model sdeDX} 
      \begin{aligned}
         & D_{\delta_t}X^{\gamma_{t}}(s) = \delta_{t}(t) + \int_{t}^{s} Db_r(X^{\gamma_{t}})(D_{\delta_{t}}X^{\gamma_{t}}) \,dr + \int_{t}^{s} D\sigma_r(X^{\gamma_{t}})(D_{\delta_{t}}X^{\gamma_{t}}) \,dW_{r} &&\quad t \le s \le T,
        \\ & D_{\delta_t}X^{\gamma_{t}}(s) = \delta_{t}(s) &&\quad 0 \le s < t.
      \end{aligned}  
      \end{equation}
      for each $\delta_t \in D([0, t], \mathbb{R}^d)$. Then, for $(s, \omega) \in [0, T] \times \Omega $ with $\omega$ not in the exceptional set, we have
      \begin{align}
       \lim_{h \rightarrow 0} \left\lvert \frac{X^{\gamma_{t} + h\delta_t }(s, \omega) - X^{\gamma_{t}}(s, \omega)}{h} - D_{\delta_{t}}X^{\gamma_{t}}(s, \omega) \right\rvert = 0,
      \end{align}
      for $h \in \mathbb{R}$. 
  \end{thm}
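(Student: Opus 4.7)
The plan is to adapt the argument of Proposition \ref{main Fre prop1} to the path-initial setting, using the scalar $h$ as a one-parameter family controlling the perturbation along $\delta_t$. On the interval $[0, t]$ the assertion is immediate: by the initial-path conditions in \eqref{main model sdeX} and \eqref{model sdeDX},
\begin{align}
X^{\gamma_t + h\delta_t}(s) - X^{\gamma_t}(s) = h\delta_t(s) = h\, D_{\delta_t}X^{\gamma_t}(s),
\end{align}
for $0 \le s \le t$, so the difference quotient equals $D_{\delta_t}X^{\gamma_t}$ identically there. The substance of the theorem lies on $[t, T]$, where both processes solve path-dependent SDEs driven by the same Brownian motion $W$.

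Writing $X^h := X^{\gamma_t + h\delta_t}$, $X^0 := X^{\gamma_t}$, and $Y := D_{\delta_t}X^{\gamma_t}$, I would first establish a quantitative stability estimate of the form $\mathbb{E} \lVert X^h - X^0 \rVert^2 \le C h^2$ for $|h|$ small. This is obtained by subtracting the defining SDEs for $X^h$ and $X^0$, exploiting the functional Lipschitz property of $b, \sigma$ coming from Assumption \ref{main F ass}, and combining the Burkholder-Davis-Gundy inequality with Gronwall's lemma. To make the Lipschitz constants uniform in $h$, I would localize on the increasing open sets $A_k$ of Assumption \ref{main F ass} by introducing stopping times $\tau_k := \inf\{s \ge t : X^0 \notin A_k\}$; the identity is then proved on the stopped processes and extended by letting $k \to \infty$.

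The core step is to identify the limit of $Z^h := (X^h - X^0)/h - Y$. The Fr\'{e}chet-differentiability of $b$ and $\sigma$ at $X^0$, together with the Lipschitz condition \eqref{main Fre prop1 ass} on $Db, D\sigma$, yields the second-order Taylor expansion
\begin{align}
b_r(X^h) - b_r(X^0) = Db_r(X^0)(X^h - X^0) + R^h_r, \quad \lVert R^h_r \rVert \le C \lVert X^h - X^0 \rVert^2,
\end{align}
and similarly for $\sigma$. Substituting and using the linearity of $Db_r(X^0)$ and $D\sigma_r(X^0)$ in their arguments shows that $Z^h$ solves the linear inhomogeneous SDE
\begin{align}
Z^h(s) = \int_t^s Db_r(X^0)(Z^h)\,dr + \int_t^s D\sigma_r(X^0)(Z^h)\,dW_r + E^h(s),
\end{align}
where $E^h$ aggregates the quadratic remainders divided by $h$. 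The stability estimate gives $\mathbb{E} \sup_s \lVert E^h(s) \rVert^2 \le C h^2$, and a final application of BDG and Gronwall yields $\mathbb{E} \sup_s \lVert Z^h(s) \rVert^2 \to 0$. Extracting a subsequence $h_n \to 0$ along which $\sup_s \lVert Z^{h_n} \rVert \to 0$ almost surely, and using the a.s.\ continuity of $h \mapsto X^h$ in sup norm (which follows from the same stability argument), one obtains the pointwise convergence stated in the theorem outside a single $\mathbb{P}$-null set.

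The principal obstacle is the local character of both the functional Lipschitz assumption (Assumption \ref{main F ass}(ii)) and the Fr\'{e}chet-derivative Lipschitz condition \eqref{main Fre prop1 ass}: the constants depend on the reference path and hold only on the open sets $A_k$ that exhaust $D([0, T], \mathbb{R}^d)$. The stopping-time localization must be carried out carefully so that both $X^h$ and $X^0$ remain in a common bounded region uniformly in small $h$, ensuring the quadratic remainder $\lVert R^h \rVert^2 / h^2$ is bounded and integrable. This mirrors the strategy behind Proposition \ref{main Fre prop1}, but is more delicate in the path-functional framework because the perturbation is a path $h\delta_t$ rather than a vector $h e_i \in \mathbb{R}^d$, so one must verify that the path geometry of $\delta_t$ propagates cleanly through the localization and through the Fr\'{e}chet expansion.
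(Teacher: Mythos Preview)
Your analytical core differs from the paper's route. The paper does not run a direct Gronwall argument on $Z^h$; instead it transplants the four-step proof of Proposition~\ref{main Fre prop1} (Protter's scheme): reduce to global Lipschitz, establish $S^p$ continuity of $x\mapsto DX^x$ via Kolmogorov, build piecewise-constant approximations $(X^{(\pi_n)},DX^{(\pi_n)})$ along random partitions, prove $\partial X^{(\pi_n)}/\partial x = DX^{(\pi_n)}$ by induction on partition intervals, and pass to the limit using distribution theory. The only new wrinkle for Theorem~\ref{main SDE diff} is that the partitions are chosen to contain the jump times of the c\`adl\`ag paths $\gamma_t$ and $\delta_t$, so the inductive step survives. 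Your Taylor-remainder plus Gronwall argument is a legitimate and more direct alternative for the $S^p$ estimate $\mathbb{E}\lVert Z^h\rVert^p\to 0$; it bypasses the discrete approximation entirely, and the quadratic bound on the remainder is exactly what \eqref{main Fre prop1 ass} delivers via the mean-value form $F(\eta)-F(\tilde\eta)-DF(\tilde\eta)(\eta-\tilde\eta)=\int_0^1[DF(\tilde\eta+\theta(\eta-\tilde\eta))-DF(\tilde\eta)](\eta-\tilde\eta)\,d\theta$.

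The gap is in your last paragraph. Extracting a subsequence $h_n\to 0$ along which $\sup_s\lVert Z^{h_n}\rVert\to 0$ a.s., together with a.s.\ continuity of $h\mapsto X^h$, does \emph{not} yield the a.s.\ limit for every $h\to 0$: continuity of $h\mapsto X^h$ says nothing about continuity of $h\mapsto (X^h-X^0)/h$, which is precisely the differentiability you are trying to prove. The paper (and Protter) close this by first producing, via Kolmogorov, continuous versions of \emph{both} $h\mapsto X^h$ and $h\mapsto D_{\delta_t}X^{\gamma_t+h\delta_t}$, and then invoking the classical fact that a continuous function whose distributional derivative is continuous is $C^1$ with that derivative in the classical sense. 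If you want to stay on your route, you would need either to upgrade your estimate to $\mathbb{E}\lVert Z^h\rVert^p\le C|h|^p$ for $p$ large and combine it with a Kolmogorov-type argument on the two-parameter family $(s,h)\mapsto X^h(s)-X^0(s)-hY(s)$, or to import the same distributional-derivative step the paper uses. As written, the passage from $S^2$ convergence to the pathwise statement in the theorem is incomplete.
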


  \begin{thm}\label{main SDE 2 diff}
    Let $b : D([0, T], \mathbb{R}^d) \rightarrow D([0, T], \mathbb{R}^d)$, $\sigma : D([0, T], \mathbb{R}^d) \rightarrow D([0, T], \mathbb{R}^{d \times N} )$ be a non-anticipative functional with Assumption~\ref{main F ass}. Assume that $b, \sigma$ have Fr\'{e}chet derivatives up to order 2 as $Db, D\sigma, D^2b, D^2\sigma$ satisfying \eqref{main prop2 ass}. Let $X^{\gamma_{t}}$, $DX^{\gamma_{t}}$ be a solution to SDEs \eqref{main model sdeX}, \eqref{model sdeDX} and $D^{2}X^{\gamma_{t}}$ be a solution to the SDE
    \begin{alignat}{3}\label{model sdeD2X}
      D_{\delta'_t}D_{\delta_t}X^{\gamma_{t}}(s) :=& \int_{s}^{t} D^{2}b_r(X^{\gamma_{t}}, D_{\delta_t}X^{\gamma_{t}})(D_{\delta'_t}X^{\gamma_{t}}) + Db_r(X^{\gamma_{t}})(D_{\delta'_t}D_{\delta_t}X^{\gamma_{t}}) \,dr         \\
      +& \int_{s}^{t} D^{2}\sigma_r(X^{\gamma_{t}}, D_{\delta_t}X^{\gamma_{t}})(D_{\delta'_t}X^{\gamma_{t}}) + D\sigma_r(X^{\gamma_{t}})(D_{\delta'_t}D_{\delta_t}X^{\gamma_{t}}) \,dW_{r} \quad &&t \le s \le T, \\
      D_{\delta'_t}D_{\delta_t}X^{\gamma_{t}}(s) =&\;0 \quad &&0 \le s \le t.
  \end{alignat}
  Then, for $(s, \omega) \in [0, T] \times \Omega $ with $\omega$ not in the exceptional set, the equality
\begin{align}
  \lim_{\left\lvert h \right\rvert  \rightarrow 0 } \left\lvert \frac{D_{\delta_t}X^{\gamma_t  + h\delta'_t}(s, \omega) - D_{\delta_t}X^{\gamma_{t}}(s, \omega) - h \cdot D_{\delta'_t}D_{\delta_t}X^{\gamma_{t}}(s) }{h }  \right\rvert = 0
\end{align}
  holds.  
  \end{thm}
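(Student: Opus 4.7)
The plan is to mimic the proof strategy of Theorem~\ref{main SDE diff} one order higher, treating the first-order derivative process $D_{\delta_t}X^{\gamma_t}$ as the ``base'' object (whose existence and differentiability-input properties are already guaranteed by Theorem~\ref{main SDE diff}) and differentiating it once more in the initial path. Concretely, I would introduce the error process
\begin{align}
E^h(s,\omega) := \frac{D_{\delta_t}X^{\gamma_t+h\delta'_t}(s,\omega) - D_{\delta_t}X^{\gamma_t}(s,\omega)}{h} - D_{\delta'_t}D_{\delta_t}X^{\gamma_t}(s,\omega),
\end{align}
for $s\in[t,T]$, with $E^h\equiv 0$ on $[0,t]$ (consistent with the initial condition $D_{\delta'_t}D_{\delta_t}X^{\gamma_t}(t)=0$, which reflects the fact that the initial value $\delta_t(t)$ of SDE~\eqref{model sdeDX} is independent of $\gamma_t$), and show that $E^h\to 0$ almost surely uniformly in $s$.

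First I would subtract the SDE~\eqref{model sdeDX} written at $\gamma_t$ from the same SDE written at $\gamma_t+h\delta'_t$, divide by $h$, and then subtract the ansatz~\eqref{model sdeD2X}. The resulting right-hand side contains differences of the form $Db_r(X^{\gamma_t+h\delta'_t})(D_{\delta_t}X^{\gamma_t+h\delta'_t})-Db_r(X^{\gamma_t})(D_{\delta_t}X^{\gamma_t})$, which I would expand by the first-order Fr\'echet Taylor expansion of $Db$ around $X^{\gamma_t}$ combined with bilinearity. Substituting $X^{\gamma_t+h\delta'_t}-X^{\gamma_t} = h\, D_{\delta'_t}X^{\gamma_t} + h\, r^h$ with $r^h\to 0$ from Theorem~\ref{main SDE diff}, the leading-order terms cancel exactly against the corresponding terms in~\eqref{model sdeD2X}, and what remains is an affine equation
\begin{align}
E^h(s) = \int_t^s \bigl(Db_r(X^{\gamma_t})(E^h) + R^h_r\bigr)\,dr + \int_t^s \bigl(D\sigma_r(X^{\gamma_t})(E^h) + S^h_r\bigr)\,dW_r,
\end{align}
with residuals $R^h_r, S^h_r$ built from the $o(\cdot)$ Taylor remainders of $Db$, $D\sigma$ and the first-order error $r^h$.

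Next I would localize via stopping times
$\tau_k:=\inf\{s\ge t:\|X^{\gamma_t}\|_s+\|X^{\gamma_t+h\delta'_t}\|_s+\|D_{\delta_t}X^{\gamma_t}\|_s+\|D_{\delta'_t}X^{\gamma_t}\|_s>k\}$, so that on $[t,\tau_k]$ the local Lipschitz hypotheses~\eqref{main prop2 ass} upgrade to bounds with a $k$-dependent constant. The BDG inequality for the stochastic integral and Cauchy–Schwarz for the drift yield an estimate of the form $\mathbb{E}[\|E^h\|^2_{t,s\wedge\tau_k}] \le C_k \int_t^s \mathbb{E}[\|E^h\|^2_{t,r\wedge\tau_k}]\,dr + \varepsilon(h,k)$ with $\varepsilon(h,k)\to 0$ as $h\to 0$ for each fixed $k$. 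Gronwall's inequality then gives $\mathbb{E}[\|E^h\|^2_{t,T\wedge\tau_k}]\to 0$, and since $\tau_k\nearrow T$ almost surely, extracting a subsequence $h_n\to 0$ yields pathwise uniform convergence $E^{h_n}\to 0$ outside an exceptional null set, which is exactly what the theorem demands.

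The main obstacle I anticipate is controlling the residual $R^h,S^h$ at the quantitative rate needed. It is not enough that $X^{\gamma_t+h\delta'_t}\to X^{\gamma_t}$: to turn an $o(\|X^{\gamma_t+h\delta'_t}-X^{\gamma_t}\|)$ Taylor remainder into $o(h)$ after dividing by $h$, one needs both $\|X^{\gamma_t+h\delta'_t}-X^{\gamma_t}\|_T = O(h)$ (a Lipschitz-in-initial-path estimate for~\eqref{main model sdeX}) and, more subtly, the same $O(h)$ bound for $\|D_{\delta_t}X^{\gamma_t+h\delta'_t}-D_{\delta_t}X^{\gamma_t}\|_T$, which comes from the linearized SDE~\eqref{model sdeDX} and requires its own Gronwall–BDG estimate on the difference of derivative processes. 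Once these two a priori bounds are established (using the local Lipschitz continuity of $Db,D\sigma$ from~\eqref{main prop2 ass} with a further localization), the residual estimates close and the argument proceeds as sketched.
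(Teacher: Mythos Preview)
Your direct approach via the error process $E^h$ and a BDG--Gronwall estimate is a legitimate strategy, but it is genuinely different from the paper's route. The paper (following Protter's Theorem~V.39 through Propositions~\ref{main Fre prop1}--\ref{main Fre 2 prop2}) first uses $S^p$ Lipschitz estimates and Kolmogorov's continuity theorem to produce a version of $h\mapsto D_{\delta_t}X^{\gamma_t+h\delta'_t}$ that is continuous in $h$ for almost every $\omega$; it then approximates the SDE by discretized versions along random partitions (extending Lemma~\ref{Fre Lem2} to path-valued initial data by including the jump times of $\gamma_t,\delta_t,\delta'_t$ in the partition), verifies the derivative relation for the discretized processes by induction on the partition intervals, and finally invokes the distribution-theoretic fact that a continuous function with continuous distributional derivative is classically differentiable. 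The whole point of this detour is to secure a single exceptional null set valid for \emph{all} $h$.

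This is exactly where your argument has a gap. From $\mathbb{E}\bigl[\|E^h\|^2_{t,T\wedge\tau_k}\bigr]\to 0$ you obtain only convergence in probability, and passing to a subsequence gives $E^{h_n}\to 0$ almost surely along that particular sequence with a null set depending on $(h_n)$. The theorem, however, asserts the full limit $\lim_{h\to 0}E^h(s,\omega)=0$ for $\omega$ outside a \emph{fixed} exceptional set; subsequential almost-sure convergence is strictly weaker, so ``exactly what the theorem demands'' is not correct. To close the gap you would need to prove an $S^p$ H\"older estimate in $h$ for $h\mapsto D_{\delta_t}X^{\gamma_t+h\delta'_t}$ and invoke Kolmogorov's continuity theorem to obtain a jointly continuous version, at which point you have essentially reproduced Step~2 of the paper's argument; your Gronwall estimate then serves to identify the derivative. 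Your residual analysis and localization are otherwise sound, and under the Lipschitz hypothesis~\eqref{main prop2 ass} on $D^2b,D^2\sigma$ the $O(h)$ bounds you flag for $\|X^{\gamma_t+h\delta'_t}-X^{\gamma_t}\|$ and $\|D_{\delta_t}X^{\gamma_t+h\delta'_t}-D_{\delta_t}X^{\gamma_t}\|$ do hold.
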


The results of the above theorems indicate that the process $X^{\gamma_{t}}$ is differentiable in the direction $\delta_t$ and the derivative is represented as a solution to SDE \eqref{model sdeDX}. We summarize the assumption of coefficients $b, \sigma$ in Theorem~\ref{main SDE 2 diff}.

  \begin{assume} \label{main coeff ass1}
    The non-anticipative functionals $b : D([0, T], \mathbb{R}^d) \rightarrow D([0, T], \mathbb{R}^d)$, $\sigma : D([0, T], \mathbb{R}^d) \rightarrow D([0, T], \mathbb{R}^{d \times N} )$ satisfy the following:
    \begin{enumerate}
      \item [(i)] Assumption~\ref{main F ass},
      \item [(ii)] $b, \sigma$ have Fr\'{e}chet derivatives up to order 2 in sense of Definition~\ref{main Fre def}, and they satisfy locally functional Lipschitz continuity \eqref{main prop2 ass}.
    \end{enumerate}
  \end{assume}
  
\section{Path-dependent option pricing}\label{sec:price}
Through this section, we set up the exponential path-dependent stock price model and apply our results to option pricing. 
Under certain conditions regarding the coefficients and payoff functional, the option price is a $\mathbb{C}^{1, 2}$ solution to a path-dependent partial differential equation (PPDE). 
Using this, we obtain  sensitivity formulas
of option prices for changes of underlying model coefficients.

\subsection{Path-dependent SDEs and PDEs}\label{subsec:PSDE}
We set up the exponential path-dependent stock price model in which the logarithm of stock price follows a general path-dependent SDE. Since this setting is constructed under the risk-neutral model, an option price is represented by a conditional expectation. Next, we introduce a backward stochastic differential equation (BSDE) to which a solution is an option price. Finally, we state a theorem that implies that the option price operator is the solution to a PPDE and a solution to the PPDE is the option price.

Let us denote a logarithm of $d$-dimensional stock price by $X$. Let $r \in \mathbb{R}_+$ be the deterministic short rate term and $q : D([0, T], \mathbb{R}^d) \rightarrow D([0, T], \mathbb{R}^d)$ be the non-anticipative dividend functional. To start under the risk-neutral setting, we define a logarithm of stock price for each $i = 1, \cdots, d$ as
\begin{align} \label{model risk-neutral_1}
  dX^{(i)}_t = (r - q^{(i)}_t(X)- \frac{1}{2}\sigma^{(i)}_t(X)\sigma^{(i)}_t(X)^{\top} )\,dt + \sigma^{(i)}_t(X)\,dW_t, \, X^{(i)}_0 = 0,
\end{align}
where  $W_t$ is $N$-dimensional Brownian motion and $\sigma^{(i)} : D([0, T], \mathbb{R}^d) \rightarrow D([0, T], \mathbb{R}^{N} )$ is a non-anticipative functional. We define a non-anticipative functional $\sigma : D([0, T], \mathbb{R}^d) \rightarrow D([0, T], \mathbb{R}^{d \times N} )$ such that its $i$-th row is $\sigma^{i}$ for all $i = 1, \cdots, d$. Then, the $d$-dimensional process $X$ can be written as 
\begin{align} \label{model risk-neutral}
  dX_t = (r \mathds{1}^{d}-q_t(X)- \frac{1}{2}\left\lvert \sigma_t(X) \right\rvert^{2}  )\,dt + \sigma_t(X)\,dW_t, \, X_0 = \mymathbb{0}^{d},
\end{align}
where $\mymathbb{0}^{d}$, $\mathds{1}^{d}$ are zero and a unit vector, respectively, in $\mathbb{R}^{d}$, and $\lvert \sigma_t \rvert^{2} : D([0, T], \mathbb{R}^d) \rightarrow D([0, T], \mathbb{R}^d)$ is a non-anticipative functional, the $i$-th coordinate of which is $\sigma^{(i)}_t(X)\sigma^{(i)}_t(X)^{\top}$ for $i = 1, \cdots, d$.

The $d$-dimensional stock price is represented by
$  S_{t} = (S^{(1)}_t , \cdots, S^{(d)}_t) := (S^{(1)}_0 e^{X^{(1)}_t}, \cdots, S^{(d)}_0 e^{X^{(d)}_t})$
where $X = (X^{(1)}, \cdots, X^{(d)})$ and $S^{(i)}_0>0$ for all $i = 1, \cdots, d$.
We obtain the equation from the multidimensional It\^{o} formula for all $i = 1, \cdots, d$
\begin{align}
  dS^{(i)}_{t} = S^{(i)}_{t}\,dX^{(i)}_{t} + \frac{1}{2}S^{i}_{t}\,d[X^{(i)}]_{t} = S^{(i)}_{t}(r - q^{(i)}_t(X)) \,dt + S^{(i)}_{t}\sigma^{(i)}_{t}(X)\,dW_{t},  
\end{align}
where $\sigma^{(i)}$ is the $i$-th row of $\sigma_t$.
Thus, the above probability space is a risk-neutral measure space. 

Let $H : D([0, T], \mathbb{R}^d) \rightarrow \mathbb{R}$ be a payoff non-anticipative functional. 
Then, the option price at time $t$ is
\begin{align} \label{model option price}
  e^{-r(T-t)} \mathbb{E}[ H(S_{0}e^{X_{T}}) | \mathcal{F}_{t} ] = e^{-r(T-t)} \mathbb{E}\left[g(X_{T}) | \mathcal{F}_{t} \right],
\end{align}
where $g : D([0, T], \mathbb{R}^d) \rightarrow \mathbb{R}$ with $g^{(i)}(\eta) = H^{(i)} \circ \exp(S_{0} \eta )$ for all $i = 1, \cdots, d$. 

Now, we introduce a BSDE to which a solution is the above option price \eqref{model option price}. Suppose that the stochastic process $X^{\gamma_{t}}$ follows SDE \eqref{main model sdeX}. For a more general setting, the non-anticipative functional $b$ is used instead of $r \mathds{1}^{d}-q_t - \frac{1}{2}\lvert \sigma_t \rvert^{2}$. Note that we set $b = r \mathds{1}^{d}-q_t - \frac{1}{2}\lvert \sigma_t \rvert^{2}$ and $\gamma_{t} = X_{t}$ to ensure that the solution to SDE \eqref{model option price} corresponds to the solution to \eqref{model risk-neutral}. 

In the remainder of this section, we assume that the coefficients $b, \sigma$ satisfy Assumption~\ref{main coeff ass1}. Then, directional derivatives $D_{\delta_t}X^{\gamma_{t}}$ and $D_{\delta'_t}D_{\delta_t}X^{\gamma_{t}}$ of $X^{\gamma_{t}}$ exist for $\delta_t, \delta'_t \in D([0, t], \mathbb{R}^d).$ In particular, we focus on the vertical derivative in Definition~\ref{ftnl ito defi}. Let $e_i$ be the $i$-th standard basis for $\mathbb{R}^{d}$. For a path $\tilde{e}_i := e_i\mathds{1}_{[0,t]} \in D([0, t], \mathbb{R}^d)$, we denote $\partial_iX^{\gamma_{t}}$ as a vertical derivative $D_{\tilde{e}_i}X^{\gamma_{t}}$. In addition, we denote $D_x F(t, \eta)$ as $(\partial_{1}F(t, \eta), \cdots, \partial_{1}F(t, \eta)) \in \mathbb{R}^{1\times d}$.


Let $\mathcal{S}^{2}(0, T ; \mathbb{R})$ be the set of all $(\mathcal{F}_{t})$-adapted processes $Y$ with $\mathbb{E}[\sup_{u \in [0, T]} \lvert Y(u) \rvert^{2}]$ $<\infty$ and $\mathcal{H}^{2}(0, T ; \mathbb{R})$ be the set of all $(\mathcal{F}_{t})$-adapted processes $Z$ with $\mathbb{E}[\int_{0}^{T} \lvert Z(u) \rvert ^{2} \,du] < \infty$. In a similar way, we can define the spaces $\mathcal{S}^{p}$ and $\mathcal{H}^{p}$ for $p \ge 2$. Suppose that $(Y^{\gamma_{t}}, Z^{\gamma_{t}})$ is a solution to BSDE \eqref{model bsdeY}:
\begin{align} \label{model bsdeY}
  Y^{\gamma_{t}}(s) = g(X^{\gamma_{t}}) - \int^{T}_{s} Z^{\gamma_{t}}(r)\,\,dW_r, \quad t \le s \le T,
\end{align}
for $X^{\gamma_{t}}$ in \eqref{main model sdeX}. From the martingale representation theorem, BSDE \eqref{model bsdeY} has a unique solution $(Y^{\gamma_{t}},Z^{\gamma_{t}})_{0 \le t \le T}$ in $\mathcal{S}^{2}(0, T ; \mathbb{R}) \times \mathcal{H}^{2}(0, T ; \mathbb{R})$ for a given $\gamma_{t} \in D([0, t], \mathbb{R}^d)$. Already, we know that $Y^{\gamma_{t}}(u) =\mathbb{E}[ g(X^{\gamma_{t}}_{T}) | \mathcal{F}_{u} ]$ for $ t \le u \le T$.

In addition, we assume that $g : D([0, T], \mathbb{R}^d) \rightarrow \mathbb{R}$ has Fr\'{e}chet derivatives up to order 2.
\begin{assume} \label{model payoff ass2}
  The payoff functional $g : D([0, T], \mathbb{R}^d) \rightarrow \mathbb{R}$ satisfies the following conditions:
  \begin{enumerate}
    \item [(i)] for each $\eta \in D([0, T], \mathbb{R}^d)$, there exists a constant $\delta = \delta(\eta) > 0$ such that if $\lVert \tilde{\eta} - \eta \rVert < \delta $, then $\lVert g(\eta) - g(\tilde{\eta}) \rVert \le C (1 + \lVert \eta \rVert + \lVert \tilde{\eta} \rVert )^{k} \lVert\eta - \tilde{\eta} \rVert$ for some $k = k(\eta) \ge 1$,
    \item  [(ii)] $g$ has Fr\'{e}chet derivatives up to order 2 in sense of Definition~\ref{main Fre def}, and they satisfy local Lipschitz continuity \eqref{main prop2 ass}. More precisely, for each $\eta \in D([0, T], \mathbb{R}^d)$, there exist constant $\epsilon > 0$, $K(\eta)$ such that if $\lVert \eta - \tilde{\eta} \rVert \le \epsilon$ then
    \begin{equation}
    \begin{aligned}\label{main ass g ineq}
      \left\lvert Dg(\eta)(\gamma) - Dg(\tilde{\eta})(\gamma) \right\rvert      & \le K(\eta) \left\lVert \eta-\tilde{\eta} \right\rVert \left\lVert \gamma \right\rVert ,               \\
      \lvert D^{2}g(\eta, \gamma)(\delta) - D^{2}g(\tilde{\eta}, \gamma)(\delta)  \rVert & \le K(\eta) \left\lVert \eta-\tilde{\eta} \right\rvert \left\lVert \gamma \right\rVert \left\lvert \delta \right\rVert,
    \end{aligned}
    \end{equation}
    for all $\gamma$, $\delta \in D([0, T], \mathbb{R}^d)$.
  \end{enumerate}
\end{assume}

Note that we assume the Lipschitz continuity of the coefficients $b$, $\sigma$ and local Lipschitz continuity of the payoff functional $g$. The difference is followed by the existence of solutions to the SDE \eqref{main model sdeX} and BSDE \eqref{model bsdeY}. To guarantee the existence and uniqueness of solutions to the SDE \eqref{main model sdeX} for the entire interval $[0, T]$, we need the coefficients $b$, $\sigma$ to be globally Lipschitz continuous. There may be a blowup in finite time if the coefficients $b$ and $\sigma$ are only locally Lipschitz continuous, not globally. Conversely, we already know that solution to BSDE \eqref{model bsdeY} uniquely exists from the martingale representation theorem. Thus, the first condition of Assumption~\ref{model payoff ass2} is only required for a technical reason.

Now, we introduce the result of this subsection. This result implies that the conditional expectation is the solution to a PPDE. Conversely, it also states that a solution to the PPDE is the conditional expectation. This means that we can estimate the conditional expectation as solving the PPDE. Recall that $\tilde{D}^{d} = \bigcup_{t \in [0, T]} D([0, t], \mathbb{R}^d)$. The proof of this theorem is in Appendix~\ref{proof of main theorem}.

\begin{thm}\label{main theorem}
  Under  Assumptions \ref{main coeff ass1} and \ref{model payoff ass2}, we define a non-anticipative functional  $u : \tilde{D}^{d} \rightarrow \mathbb{R}$ as
  \begin{equation} \label{main def of u}
  u_t(\gamma_{t}) := Y^{\gamma_{t}}(t) = \mathbb{E}\left[ g(X^{\gamma_{t}}_{T}) | \mathcal{F}_{t} \right]
  \end{equation}
  for $\gamma_{t} \in D([0, t], \mathbb{R}^d)$, where $Y^{\gamma_{t}}$ is a solution to BSDE \eqref{model bsdeY}.
  Then  $u$ has first and second-order vertical derivatives and the horizontal derivative, denoted as $D_{x}u,$ $D_{xx}u$, and $D_tu,$ respectively.
 Moreover, $u$ satisfies a PPDE
  \begin{equation}
    \begin{aligned} \label{main PPDE}
    D_{t}u_t(\gamma_{t}) &+ \langle b_t(\gamma_{t}) , D_{x}u_t(\gamma_{t}) \rangle + \frac{1}{2} \textnormal{tr}\left(\sigma_t(\gamma_{t}) D_{xx}u_t(\gamma_{t})\sigma^{\top}_t(\gamma_{t}) \right) = 0, \;\; &&\textnormal{for} \;\; &&\gamma_{t} \in D([0, t], \mathbb{R}^d), \\
    u_T(\gamma_{T})      &= g(\gamma_{T}), \;\; &&\textnormal{for} \;\; &&\gamma_{T} \in D([0, T], \mathbb{R}^d).
  \end{aligned}
  \end{equation}
Conversely, suppose that the regular functional $u$ is a solution to PPDE \eqref{main PPDE}. Then,  
\begin{equation}
u_t(\gamma_{t}) = Y^{\gamma_{t}}(t), \qquad \textnormal{for} \qquad \gamma_{t} \in D([0, t], \mathbb{R}^d),
\end{equation}
where $Y^{\gamma_{t}}$ is a solution to BSDE \eqref{model bsdeY}.
\end{thm}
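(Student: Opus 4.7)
The plan is to attack the two implications separately, using Theorem~\ref{ftnl Ito formula} as the common engine and Theorems~\ref{main SDE diff}--\ref{main SDE 2 diff} to obtain the regularity of $u$ needed to invoke it. The forward direction (that $u$ is regular and solves the PPDE) is substantially harder than the converse, so I would organize the forward proof in four stages: first establish the vertical derivatives of $u$, then the horizontal derivative, then verify the continuity/boundedness-preservation requirements of Definition~\ref{ftnl regular def}, and only then apply the functional It\^{o} formula together with the martingale property of $s \mapsto Y^{\gamma_t}(s)$ to read off the PPDE.

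For the vertical derivatives, the formal candidates are
\begin{equation}
\partial_i u_t(\gamma_t) = \mathbb{E}\bigl[Dg(X^{\gamma_t}_T)(\partial_i X^{\gamma_t})\bigr], \qquad \partial_j \partial_i u_t(\gamma_t) = \mathbb{E}\bigl[D^2 g(X^{\gamma_t}_T, \partial_i X^{\gamma_t})(\partial_j X^{\gamma_t}) + Dg(X^{\gamma_t}_T)(\partial_j \partial_i X^{\gamma_t})\bigr],
\end{equation}
where $\partial_i X^{\gamma_t}$ and $\partial_j \partial_i X^{\gamma_t}$ are the first- and second-order directional derivatives supplied by Theorems~\ref{main SDE diff}--\ref{main SDE 2 diff}. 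Justifying the interchange of differentiation and expectation reduces to building integrable dominators: I would combine the local polynomial growth of $g$ and $Dg$ from Assumption~\ref{model payoff ass2}(i)--(ii) with standard $L^p$ moment estimates on $X^{\gamma_t}, \partial_i X^{\gamma_t}, \partial_j\partial_i X^{\gamma_t}$ obtained from their linear SDEs \eqref{model sdeDX}--\eqref{model sdeD2X} via Gr\"{o}nwall and BDG. The same estimates, applied path-wise in $\gamma_t$, deliver the continuity of $\gamma_t \mapsto D_x u_t(\gamma_t)$ and $\gamma_t \mapsto D_{xx} u_t(\gamma_t)$ in $\|\cdot\|_t$, hence their left-continuity and boundedness-preservation.

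The horizontal derivative is where I expect the main friction, because it corresponds to a pure time-perturbation with frozen path, a mode not directly covered by the SDE-differentiability machinery above. My plan is to avoid proving $D_t u$ exists \emph{ab initio} and instead obtain it as a by-product of the functional It\^{o} expansion: fix $\gamma_t$, extend it to $\gamma_t^t$ by freezing, and apply Theorem~\ref{ftnl Ito formula} to $s \mapsto u_s(X^{\gamma_t}_s)$ on $[t, t+\delta]$ using only the already-established vertical derivatives. Because $s \mapsto u_s(X^{\gamma_t}_s) = \mathbb{E}[g(X^{\gamma_t}_T)\mid \mathcal{F}_s]$ is a martingale, the bounded-variation part of the expansion must be zero in expectation for every $\delta$; dividing by $\delta$, letting $\delta \downarrow 0$, and using left-continuity of $D_x u, D_{xx} u$ identifies
\begin{equation}
D_t u_t(\gamma_t) = -\langle b_t(\gamma_t), D_x u_t(\gamma_t)\rangle - \tfrac{1}{2}\,\mathrm{tr}\bigl(\sigma_t(\gamma_t) D_{xx} u_t(\gamma_t) \sigma_t^\top(\gamma_t)\bigr),
\end{equation}
simultaneously exhibiting the horizontal derivative and verifying the PPDE \eqref{main PPDE}. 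The terminal condition $u_T(\gamma_T) = g(\gamma_T)$ is built into the definition \eqref{main def of u}.

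The converse is then short. Given a regular solution $u$ to the PPDE, I would apply the functional It\^{o} formula to $u_s(X^{\gamma_t}_s)$ on $[t, T]$; the PPDE annihilates the drift, leaving
\begin{equation}
u_T(X^{\gamma_t}_T) - u_t(\gamma_t) = \int_t^T D_x u_s(X^{\gamma_t}_s)\,\sigma_s(X^{\gamma_t})\,dW_s.
\end{equation}
Boundedness-preservation of $D_x u$ and $\sigma$ combined with the polynomial growth input from Assumption~\ref{model payoff ass2}(i) upgrade the stochastic integral from a local to a true martingale; taking $\mathbb{E}[\,\cdot \mid \mathcal{F}_t]$ and using the terminal condition $u_T = g$ yields $u_t(\gamma_t) = \mathbb{E}[g(X^{\gamma_t}_T)\mid \mathcal{F}_t] = Y^{\gamma_t}(t)$. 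The principal technical burden of the whole proof remains the uniform-in-$\gamma_t$ stability estimates for the perturbed SDEs, which must be strong enough to feed both the dominated-convergence arguments for the vertical derivatives and the left-continuity requirements of Definition~\ref{ftnl regular def} before the It\^{o} formula is invoked.
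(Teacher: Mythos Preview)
Your treatment of the vertical derivatives and of the converse direction is essentially the paper's own argument and is sound. The gap is in the horizontal derivative.

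You propose to ``obtain $D_t u$ as a by-product of the functional It\^{o} expansion'' by applying Theorem~\ref{ftnl Ito formula} to $s\mapsto u_s(X^{\gamma_t}_s)$ \emph{using only the already-established vertical derivatives}. This is circular: Theorem~\ref{ftnl Ito formula} is stated for \emph{regular} functionals in the sense of Definition~\ref{ftnl regular def}, and regularity already presupposes the existence of $D_t u$. There is no version of the functional It\^{o} formula in the paper that decomposes $u_s(X^{\gamma_t}_s)$ knowing only $D_x u$ and $D_{xx}u$; the bounded-variation part you want to read off is precisely $\int D_t u + \langle b, D_x u\rangle + \tfrac12\mathrm{tr}(\cdots)\,ds$, so without $D_t u$ you cannot even write the expansion, let alone isolate the PPDE from it. Your martingale-plus-divide-by-$\delta$ scheme never gets off the ground.

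The paper handles this by an approximation argument that avoids invoking It\^{o} on $u$ itself. One freezes the path on a partition $\pi_n$, so that on each subinterval the SDE \eqref{main model sdeX} becomes an ordinary (finite-dimensional-parameter) SDE and the associated $u^{(n)}$ is given by a chain of classical Feynman--Kac PDEs \`a la Pardoux--Peng. For those, $\partial_t u^{(n)}$ exists by standard parabolic theory and satisfies $\partial_t u^{(n)} = -L^{(n)} u^{(n)}$ on each piece. One then proves $u^{(n)}(\gamma_{t,s})\to u_s(\gamma_{t,s})$, $D_x u^{(n)}\to D_x u$, $D_{xx}u^{(n)}\to D_{xx}u$ (this uses the same $S^p$ stability estimates you planned for the vertical part), passes to the limit in the integrated identity
\[
u^{(n)}(\gamma_{t,t+\delta}) - u^{(n)}(\gamma_t) \;=\; -\int_t^{t+\delta} L^{(n)} u^{(n)}(\gamma_{t,s})\,ds,
\]
and only then differentiates in $\delta$ to \emph{define} $D_t u_t(\gamma_t) = -L u_t(\gamma_t)$. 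The functional It\^{o} formula is used only for the converse, exactly as you do.
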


Now, we set the non-anticipative functional $b_t = r \mathds{1}^{d}-q_t - \frac{1}{2}\lvert \sigma_t \rvert^{2}$ as in \eqref{model risk-neutral} to examine the option price. Then, the PPDE \eqref{main PPDE} is changed to 
\begin{equation}
  \begin{aligned} \label{main PPDE_option}
  D_{t}u_t(\gamma_{t}) &+ \langle r\mathds{1}^{d} - q_t(\gamma_{t}) ,  D_{x}u_t(\gamma_{t})  \rangle  \\
  &+ \frac{1}{2} \textnormal{tr}\left(\sigma_t(\gamma_{t}) \cdot (D_{xx}u_t(\gamma_{t}) - \textnormal{diag}_d(D_xu_t(\gamma_t) )) \sigma^{\top}_{t}(\gamma_{t})  \right) = 0, \, &&\textnormal{for}\,  \gamma_{t} \in D([0, t], \mathbb{R}^d), \\
   u_T(\gamma_{T})      &= g(\gamma_{T})  \, &&\textnormal{for} \, \gamma_{T} \in D([0, T], \mathbb{R}^d),
\end{aligned}
\end{equation}
where $\textnormal{diag}_d(D_xu_t)$ is a $d \times d$ diagonal matrix whose $i$-th coordinate is $\partial_iu_t $ for each $i = 1, \cdots, d$.

Let $u$ be the solution to the above PPDE \eqref{main PPDE_option} and define a non-anticipative functional $v$ as $v_t(\gamma_t) = u_t(\gamma_t)\exp(-r(T-t))$. 
Then, the option price is $v_t(X_t)$, since $v_t(X_t) = u_t(X_t)\exp(-r(T-t)) = \mathbb{E}[g(X_T) | \mathcal{F}_t ]\exp(-r(T-t))$.
Moreover, we can obtain the hedging portfolio of option. This construction is motivated by Theorem~5.1 in \cite{fournie2010functional}. Recall that the $1$-dimensional adapted process $C_{t}$ is a self-financing portfolio with initial value $x$ and $d$-dimensional position process $a_t = (a^{(1)}_t, \cdots, a^{(d)}_t )$ under the $d$-dimensional stock model $S_{t}$ if $C_t$ satisfies 
\begin{equation}
  \begin{aligned} \label{main def portfolio}
    dC_t &=  \frac{C_t -  \langle a_t, S_t \rangle}{R_t} \, dR_t + \sum_{i = 1}^{d} a^{(i)}_t S^{(i)}_t q^{i}_t(X)\, dt + \sum_{i = 1}^{d} a^{(i)}_t  \, dS^{(i)}_t  \\
    &= \Big(  rC_t - \sum_{i = 1}^{d} a^{(i)}_t S^{(i)}_t(r - q^{(i)}_t(X)) \Big) \, dt + \sum_{i = 1}^{d} a^{(i)}_t  \, dS^{(i)}_t \\
        C_0 &= x,
  \end{aligned}
  \end{equation}
where $R_t = \exp(rt)$ denotes bond price with short rate interest $r$ (i.e., $dR_t = rR_t\,dt$).

\begin{prop} \label{greek value prop1}
  Let $u$ be the solution to the above PPDE \eqref{main PPDE_option} and define a non-anticipative functional $v$ as $v_t(\gamma_t) = u_t(\gamma_t)e^{-r(T-t)}$. Then, $v_t(X_t)$ is the self-financing portfolio with the initial value $\mathbb{E}[u_T(X_T)]e^{-rT}$ and the $d$-dimensional position process whose $i$-th coordinate is $\partial_iu_t(X_t)/S^{(i)}_{t}e^{-r(T-t)}$.
\end{prop}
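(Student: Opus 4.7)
The plan is to apply the functional Itô formula to $v_t(X_t)$ and show that the resulting decomposition matches the self-financing portfolio SDE \eqref{main def portfolio} with the claimed position process. Since $u$ is regular and $X$ is a continuous semimartingale, Theorem~\ref{ftnl Ito formula} applies. First I would record the derivatives of $v_t(\gamma_t)=u_t(\gamma_t)e^{-r(T-t)}$: the vertical derivatives transform trivially, $D_xv_t=e^{-r(T-t)}D_xu_t$ and $D_{xx}v_t=e^{-r(T-t)}D_{xx}u_t$, while the horizontal derivative picks up the chain rule contribution from the exponential, giving $D_tv_t(\gamma_t)=e^{-r(T-t)}\bigl(D_tu_t(\gamma_t)+ru_t(\gamma_t)\bigr)$.

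Next I would plug these into the functional Itô formula, using the dynamics of $X$ from \eqref{model risk-neutral} and the identity $d[X^{(i)},X^{(j)}]_t=(\sigma_t\sigma_t^\top)_{ij}\,dt$. The resulting drift of $v_t(X_t)$ is
\begin{align}
e^{-r(T-t)}\Bigl[ru_t+D_tu_t+\langle r\mathds{1}^d-q_t-\tfrac12|\sigma_t|^2,D_xu_t\rangle+\tfrac12\operatorname{tr}(\sigma_tD_{xx}u_t\sigma_t^\top)\Bigr].
\end{align}
The key step is to recognize the Itô correction: by definition of $|\sigma_t|^2$, one has the identity $\langle D_xu_t,|\sigma_t|^2\rangle=\operatorname{tr}(\sigma_t\operatorname{diag}_d(D_xu_t)\sigma_t^\top)$, so the bracket above equals $ru_t$ plus precisely the left-hand side of the PPDE \eqref{main PPDE_option}, which vanishes. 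Hence $dv_t(X_t)=rv_t(X_t)\,dt+e^{-r(T-t)}D_xu_t(X_t)\sigma_t(X)\,dW_t$.

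It remains to match this with the self-financing SDE. With the candidate position $a^{(i)}_t=\partial_iu_t(X_t)\,e^{-r(T-t)}/S^{(i)}_t$, direct substitution in \eqref{main def portfolio} yields
\begin{align}
\sum_{i=1}^d a^{(i)}_t\,dS^{(i)}_t=e^{-r(T-t)}\langle r\mathds{1}^d-q_t(X),D_xu_t(X_t)\rangle\,dt+e^{-r(T-t)}D_xu_t(X_t)\sigma_t(X)\,dW_t,
\end{align}
and the drift correction $rv_t-\sum_i a^{(i)}_tS^{(i)}_t(r-q^{(i)}_t(X))$ cancels the $dt$ piece above, leaving exactly $rv_t(X_t)\,dt+e^{-r(T-t)}D_xu_t(X_t)\sigma_t(X)\,dW_t$, which agrees with the expression for $dv_t(X_t)$. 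Finally, the initial value is $v_0(X_0)=u_0(X_0)e^{-rT}=\mathbb{E}[g(X_T)]e^{-rT}=\mathbb{E}[u_T(X_T)]e^{-rT}$ using the terminal condition $u_T=g$ in \eqref{main PPDE_option}.

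The main obstacle is really only bookkeeping: one must be careful that the horizontal derivative $D_t$ captures the explicit time-dependence of the discount factor (this step is occasionally miswritten), and one must identify $\operatorname{diag}_d(D_xu_t)$ in the PPDE as the Itô correction that arises when passing from the log-price dynamics to the stock dynamics. Once these two algebraic identifications are in place, everything reduces to the PPDE \eqref{main PPDE_option}.
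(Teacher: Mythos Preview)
Your proof is correct and follows essentially the same approach as the paper: apply the functional It\^{o} formula, use the PPDE \eqref{main PPDE_option} to reduce the drift, and match against the self-financing condition \eqref{main def portfolio}. The only cosmetic difference is ordering: the paper applies It\^{o} to $u_t(X_t)$ first (obtaining $du_t(X_t)=D_xu_t(X_t)\sigma_t(X)\,dW_t$ directly from the PPDE), then rewrites the martingale increment in terms of $dS_t$, and finally multiplies by $e^{-r(T-t)}$ via the product rule, whereas you apply It\^{o} directly to $v_t(X_t)$ and handle the discount factor through $D_tv$.
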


\subsection{Greeks}\label{subsec:G}

For simplicity, we set the dimension $N = d = 1$. We continue using all settings in Section~\ref{subsec:PSDE} except for the dimension.
This section studies the sensitivity of option prices for changes in short rate $r$ and volatility $\sigma$. Since we have $b_t = r - q_t- \sigma^{2}_t/2$ 
to examine option prices, the changes in short rate and volatility can be expressed by the changes in $b$ and $\sigma$, which are the coefficients of SDE \eqref{model risk-neutral}. In this process, we define families of coefficients under certain assumptions. For each coefficient, we compare the value of the portfolio obtained in Proposition~\ref{greek value prop1}. Using this, we obtain the sensitivity formula. This approach is inspired by Chapter~5 in \cite{fournie2010functional}.

First, we 
consider families of 
coefficients  $(r^{\epsilon})_{\epsilon \in I},$ 
$(q_\cdot^{\epsilon})_{\epsilon \in I}$ and $(\sigma_\cdot^{\epsilon})_{\epsilon \in I}$
where $I=(-1,1).$
Here, $\epsilon$ is the perturbation parameter.
For convenience, we define
$ b^{\epsilon}_t :=r^{\epsilon } -q_t^\epsilon- \frac{1}{2}(\sigma^{\epsilon}_t)^{2}.$
For each $\epsilon \in I$, the perturbed stock price is $S^{\epsilon}_{t}= S_{0}e^{X^{\epsilon}} $
where its logarithm $X^{\epsilon}$  is a solution to the SDE
\begin{equation} \label{greek stock model}
\begin{aligned}
 dX^{\epsilon}_{t}& = b^{\epsilon}_t(X^{\epsilon})\,dt + \sigma^{\epsilon}_{t}(X^{\epsilon})\,dW_{t}, \quad X^{\epsilon}_{0} = 0.
\end{aligned}
\end{equation}

\begin{assume} \label{greek assum}
  The family of coefficients $b^{\epsilon}$ and $\sigma^{\epsilon}$ satisfy the following conditions:
  \begin{enumerate}
          \item [(i)] For $\psi = b, \sigma, q$, there exist differentials $\dot{\psi} : [0, T] \times D([0, T], \mathbb{R}) \rightarrow \mathbb{R}$ satisfying the following: for each $\eta_0 \in D([0, T], \mathbb{R})$, there is a $\delta = \delta(\eta_0) > 0$ such that for all $\eta \in D([0, T], \mathbb{R})$ with $ \lVert \eta - \eta_0 \rVert < \delta$,
          \begin{equation} \label{greek diff ineq}
          \begin{aligned}
                \lvert \psi^{\epsilon}_{t}(\eta) - \psi_{t}(\eta) -\epsilon \dot{\psi}_t(\eta) \rvert \le  \epsilon \phi(\eta_0, \epsilon) \left\lVert \eta \right\rVert^{k}_{t},
                \end{aligned}
              \end{equation}      
                where $t \in [0, T]$, $k > 0 $ is independent of $\eta$ and $\phi : D([0, T], \mathbb{R}) \times I \rightarrow \mathbb{R}_{+}$ is continuous with respect to $\epsilon$ with $\phi(\cdot , 0) = 0$.

    \item [(ii)] The families $b^{\epsilon}$, $\sigma^{\epsilon}$ satisfy Assumption~\ref{main coeff ass1}. Moreover, the Lipschitz continuity constant is independent of $\epsilon$. More precisely, for each $\eta \in D([0, T], \mathbb{R}^d)$, there exist constant $\kappa > 0$, $K(\eta) > 0$ such that if $\lVert \eta - \tilde{\eta} \rVert \le \kappa$ then
    \begin{equation}
    \begin{aligned}
      \left\lVert D\phi^{\epsilon}(\eta)(\gamma) - D\phi^{\epsilon}(\tilde{\eta})(\gamma) \right\rVert      & \le K(\eta) \left\lVert \eta-\tilde{\eta} \right\rVert \left\lVert \gamma \right\rVert ,               \\
      \lVert D^{2}\phi^{\epsilon}(\eta, \gamma)(\delta) - D^{2}\phi^{\epsilon}(\tilde{\eta}, \gamma)(\delta)  \rVert & \le K(\eta) \left\lVert \eta-\tilde{\eta} \right\rVert \left\lVert \gamma \right\rVert \left\lVert \delta \right\rVert,
    \end{aligned}
    \end{equation}
    for all $\gamma$, $\delta \in D([0, T], \mathbb{R}^d)$ where $\phi^{\epsilon} = b^{\epsilon}, \sigma^{\epsilon}$.

    \item [(iii)] The differentials   $\dot{b}$, $\dot{\sigma}$  satisfy locally functional Lipschitz continuity, which is the second condition of Assumption~\ref{main F ass}.
    \end{enumerate}
\end{assume}

The first condition indicates the local existence of differentials of the family of coefficients. The second condition means that 
we can apply Theorem~\ref{main theorem} to the stock model in \eqref{greek stock model} for each $\epsilon \in I$. The third condition is needed for a technical reason to prove Theroem~\ref{greek formula thm}.

Let $u$ be a solution to PPDE \eqref{main PPDE_option}. As shown in the paragraph before Proposition~\ref{greek value prop1}, 
the option price at time $0$ is $v_0(X_0) = u_0(X_0)e^{-rT}$ and the perturbed one is $v^{\epsilon}_0(X_{0}) = \mathbb{E}[g(X^{\epsilon}_T)]e^{-r^{\epsilon}T}$ for perturbed short rate term $r^{\epsilon}$. 
From the terminal condition of the PPDE \eqref{main PPDE_option}, we know $g(X^{\epsilon}_T) = u_T(X^{\epsilon}_T)$. Therefore, the sensitivity of option prices is written as 
\begin{align}\label{greek sensitivity form}
  \lim_{\epsilon \rightarrow 0} \frac{1}{\epsilon} (v^{\epsilon}_0(X_{0}) - v_0(X_0) ) =   \lim_{\epsilon \rightarrow 0} \frac{1}{\epsilon} \mathbb{E} [u_T(X^{\epsilon}_T){e^{-r^{\epsilon}T}} - u_T(X_T){e^{-rT}}   ].
\end{align}
To make the term $u_T(X^{\epsilon}_T)$, we apply the portfolio in Proposition~\ref{greek value prop1} to the perturbed stock dynamic $S^{\epsilon}$ in \eqref{greek stock model}. The next proposition states the value of the portfolio at time $t$.
\begin{prop} \label{greek value prop2}
  For each $\epsilon \in I$, define the stochastic process $G^{\epsilon}$ as
  \begin{equation}
  \begin{aligned}
    G^{\epsilon}(t) = u_t(X^{\epsilon}_t) + \int_{0}^{t} D_{x}u_s(X^{\epsilon}_s) \left( b_{s}(X^{\epsilon}) - b^{\epsilon}_{s}(X^{\epsilon}) \right) 
    + \frac{1}{2}D_{xx}u_s(X^{\epsilon}_s) \big( \sigma_{s}^{2}(X^{\epsilon}) - (\sigma^{\epsilon}_s)^{2}(X^{\epsilon}) \big) \,ds,
  \end{aligned}
\end{equation}
  and $K^{\epsilon}(t) = G^{\epsilon}(t)e^{-r^{\epsilon}(T-t)}.$ Then, $G^{\epsilon}$ is a martingale and $K^{\epsilon}$ is a self-financing portfolio under the perturbed stock model $S^{\epsilon}_t $ in \eqref{greek stock model} with initial value $\mathbb{E}[u_T(X^{\epsilon}_T)]e^{-r^{\epsilon}T}$ and the position $\frac{D_{x}u_t(X^{\epsilon})_t}{S^{\epsilon}_{t}}e^{-r^{\epsilon}T}$.
\end{prop}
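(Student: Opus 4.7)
The plan is to apply the functional Itô formula to $u_t(X^\epsilon_t)$ and then invoke the PPDE \eqref{main PPDE} satisfied by $u$ with respect to the \emph{unperturbed} coefficients $b,\sigma$ in order to cancel the drift against the integrand appearing in the definition of $G^\epsilon$; what remains is a stochastic integral, giving the martingale property. The self-financing claim then follows from Itô's product rule on $K^\epsilon(t)=G^\epsilon(t)e^{-r^\epsilon(T-t)}$ and a direct comparison with \eqref{main def portfolio}.

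First, since $u$ is the regular functional produced by Theorem~\ref{main theorem}, Theorem~\ref{ftnl Ito formula} applied to the continuous semimartingale $X^\epsilon$ solving \eqref{greek stock model} gives
\begin{align}
du_t(X^\epsilon_t) = D_t u_t(X^\epsilon_t)\,dt + D_x u_t(X^\epsilon_t)\,dX^\epsilon_t + \tfrac{1}{2} D_{xx} u_t(X^\epsilon_t)\,d[X^\epsilon]_t.
\end{align}
Substituting $dX^\epsilon_t = b^\epsilon_t(X^\epsilon)\,dt + \sigma^\epsilon_t(X^\epsilon)\,dW_t$ and using the PPDE evaluated pointwise at $\gamma_t=X^\epsilon_t$ to replace $D_t u_t(X^\epsilon_t)$ by $-b_t(X^\epsilon)D_x u_t(X^\epsilon_t)-\tfrac{1}{2}\sigma_t^2(X^\epsilon)D_{xx} u_t(X^\epsilon_t)$, the $dt$-terms rearrange to
\begin{align}
du_t(X^\epsilon_t) = D_x u_t(X^\epsilon_t)\bigl(b^\epsilon_t-b_t\bigr)(X^\epsilon)\,dt + \tfrac12 D_{xx} u_t(X^\epsilon_t)\bigl((\sigma^\epsilon_t)^2-\sigma_t^2\bigr)(X^\epsilon)\,dt + D_x u_t(X^\epsilon_t)\sigma^\epsilon_t(X^\epsilon)\,dW_t.
\end{align}
Differentiating the definition of $G^\epsilon$ and adding, the two $dt$-terms cancel exactly against the integrand of the Lebesgue integral, leaving $dG^\epsilon(t)=D_x u_t(X^\epsilon_t)\sigma^\epsilon_t(X^\epsilon)\,dW_t$. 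To upgrade from local to true martingale, I would combine the boundedness-preserving property of $D_xu$ from Definition~\ref{ftnl regular def}(iii) with the uniform-in-$\epsilon$ moment estimates on $\|X^\epsilon\|$ and $\sigma^\epsilon_t(X^\epsilon)$ provided by Assumption~\ref{greek assum}(ii), which yields $\mathbb{E}\bigl[\int_0^T (D_x u_t(X^\epsilon_t)\sigma^\epsilon_t(X^\epsilon))^2\,dt\bigr]<\infty$.

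For the portfolio claim, Itô's product rule on $K^\epsilon(t)=G^\epsilon(t)e^{-r^\epsilon(T-t)}$ gives
\begin{align}
dK^\epsilon(t) = r^\epsilon K^\epsilon(t)\,dt + e^{-r^\epsilon(T-t)} D_x u_t(X^\epsilon_t)\sigma^\epsilon_t(X^\epsilon)\,dW_t.
\end{align}
Using $dS^\epsilon_t=S^\epsilon_t(r^\epsilon-q^\epsilon_t(X^\epsilon))\,dt+S^\epsilon_t\sigma^\epsilon_t(X^\epsilon)\,dW_t$ and the portfolio equation \eqref{main def portfolio} applied to $S^\epsilon$, one verifies that with the stated position (reading $e^{-r^\epsilon T}$ as $e^{-r^\epsilon(T-t)}$ to match the discounting used in Proposition~\ref{greek value prop1}) $a^\epsilon_t = D_x u_t(X^\epsilon_t) e^{-r^\epsilon(T-t)}/S^\epsilon_t$, the $(r^\epsilon-q^\epsilon_t)$ and $q^\epsilon_t$ contributions rearrange into the $r^\epsilon K^\epsilon(t)\,dt$ drift and the diffusion matches, so $K^\epsilon$ is indeed self-financing. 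The initial value identity is $K^\epsilon(0)=G^\epsilon(0)e^{-r^\epsilon T}=\mathbb{E}[G^\epsilon(T)]e^{-r^\epsilon T}=\mathbb{E}[u_T(X^\epsilon_T)]e^{-r^\epsilon T}$ once one checks that the Lebesgue-integral part of $G^\epsilon(T)$ has zero expectation; alternatively, one reads off $G^\epsilon(0)=u_0(X^\epsilon_0)$ directly and then uses the martingale property to rewrite.

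The delicate point I expect is the pathwise substitution $\gamma_t\mapsto X^\epsilon_t$ into the PPDE. The PPDE \eqref{main PPDE} was proven for $u$ associated with coefficients $b,\sigma$ and holds for every $\gamma_t\in D([0,t],\mathbb{R}^d)$, so applying it along the \emph{different} stochastic path $X^\epsilon$ requires that $D_tu,D_xu,D_{xx}u$ be genuine non-anticipative regular functionals (not merely objects defined along the unperturbed flow $X$); this is precisely the content supplied by Theorem~\ref{main theorem}, so the substitution is legal once joined with the integrability argument used to obtain the true martingale property.
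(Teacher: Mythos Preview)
Your proof is correct and follows essentially the same approach as the paper: apply the functional It\^{o} formula to $u_t(X^\epsilon_t)$, use the PPDE \eqref{main PPDE} for the unperturbed coefficients to eliminate $D_t u$, observe that the remaining drift is exactly cancelled by the integrand in $G^\epsilon$, and then deduce the self-financing property of $K^\epsilon$ from the product rule together with the dynamics of $S^\epsilon$, just as in the proof of Proposition~\ref{greek value prop1}. Your additional remarks on integrability and on the legitimacy of evaluating the PPDE along $X^\epsilon$ are sound elaborations that the paper leaves implicit.
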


Using the above arguments, we can obtain the 
following formula, which gives the sensitivity of option price when the short rate $r$ and the volatility $\sigma$ are perturbed.
For the last equality in the theorem, we used   $\dot{r} = \dot{b}_{t} + \dot{\sigma}_{t}\sigma_{t} + \dot{q}_t$ from $ b^{\epsilon}_t :=r^{\epsilon } -q_t^\epsilon- \frac{1}{2}(\sigma^{\epsilon}_t)^{2}$,

\begin{thm} \label{greek formula thm}
    Under Assumptions \ref{main coeff ass1}, \ref{model payoff ass2}, and \ref{greek assum}, the sensitivity of the option price \eqref{greek sensitivity form} is 
    \begin{equation}\label{greek formula}
    \begin{aligned} 
      &\lim_{\epsilon \rightarrow 0} \frac{1}{\epsilon} (v_0^{\epsilon}(X_{0}) - v_0(X_0) ) \\ =\; 
      & - u_0(X_{0})e^{-rT}\dot{r}T 
    - \mathbb{E}\Big[\int_{0}^{T} D_{x}u_s(X_s) \dot{b}_{s}(X) +  D_{xx}u_s(X_s) \dot{\sigma}_{s}(X)\sigma_{s}(X) \,ds\Big]e^{-rT}\\
   =\, &-  u_0(X_{0})e^{-rT}\dot{r}T  
    - \mathbb{E}\Big[\int_{0}^{T} (\dot{r} - \dot{q}_s(X) )D_{x}u_s(X_s)  +  \left(D_{xx}u_s(X_s) -  D_{x}u_s(X_s) \right)\dot{\sigma}_{s}(X)\sigma_{s}(X) \,ds\Big]e^{-rT}.
  \end{aligned}
\end{equation}

\end{thm}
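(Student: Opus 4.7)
The plan is to apply Proposition~\ref{greek value prop2} to reduce $v_0^\epsilon - v_0$ to an expression whose $\epsilon$-expansion can be carried out term by term. Since $G^\epsilon$ is a martingale with $G^\epsilon(0) = u_0(X_0)$, taking expectation at $t = T$ and using the terminal condition $u_T = g$ from the PPDE~\eqref{main PPDE_option} yields
\begin{equation*}
\mathbb{E}[g(X_T^\epsilon)] \;=\; u_0(X_0) \;-\; \mathbb{E}\Big[\int_0^T D_xu_s(X_s^\epsilon)(b_s - b_s^\epsilon)(X^\epsilon) + \tfrac{1}{2}D_{xx}u_s(X_s^\epsilon)(\sigma_s^2 - (\sigma_s^\epsilon)^2)(X^\epsilon)\,ds\Big].
\end{equation*}
Multiplying by $e^{-r^\epsilon T}$ and subtracting $v_0 = u_0(X_0)e^{-rT}$ decomposes $v_0^\epsilon - v_0$ into an interest-rate piece $u_0(X_0)(e^{-r^\epsilon T} - e^{-rT})$ and a drift/volatility piece $-e^{-r^\epsilon T}\mathbb{E}[I^\epsilon]$, where $I^\epsilon$ denotes the $ds$-integral above.

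Next I divide by $\epsilon$ and let $\epsilon \to 0$. The chain rule gives the interest-rate limit $-u_0(X_0)T\dot{r}e^{-rT}$. For the drift/volatility piece, Assumption~\ref{greek assum}(i) yields the pointwise expansions $\epsilon^{-1}(b_s - b_s^\epsilon)(\eta) \to -\dot{b}_s(\eta)$ and, via the factorisation $(\sigma^\epsilon)^2 - \sigma^2 = (\sigma^\epsilon - \sigma)(\sigma^\epsilon + \sigma)$, $\epsilon^{-1}(\sigma_s^2 - (\sigma_s^\epsilon)^2)(\eta) \to -2\sigma_s(\eta)\dot{\sigma}_s(\eta)$. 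Combined with $L^p$-convergence $X^\epsilon \to X$ (obtained from \eqref{greek stock model} by Gronwall and Burkholder--Davis--Gundy using the uniform-in-$\epsilon$ Lipschitz bounds of Assumption~\ref{greek assum}(ii)) and left continuity of the regular functionals $D_xu$ and $D_{xx}u$, the integrand of $\epsilon^{-1}I^\epsilon$ converges pointwise a.s.\ to the desired integrand, and the first line of \eqref{greek formula} follows after combining with the sign and $e^{-rT}$ factors.

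The hard part is justifying the interchange of limit and expectation. Assumption~\ref{greek assum}(i) bounds $\epsilon^{-1}|\psi^\epsilon - \psi|(\eta)$ by $|\dot\psi(\eta)| + \phi(\eta_0,\epsilon)\|\eta\|_t^k$ for $\psi \in \{b,\sigma\}$, and Assumption~\ref{greek assum}(ii) delivers uniform-in-$\epsilon$ moment estimates $\sup_\epsilon \mathbb{E}[\|X^\epsilon\|_T^p] < \infty$ by standard SDE arguments. What remains is a polynomial growth bound on $D_xu$ and $D_{xx}u$, which I would establish by representing $D_xu_s(\gamma_s) = \mathbb{E}[Dg(X_T^{\gamma_s})(\partial_1 X^{\gamma_s})]$ via Theorem~\ref{main SDE diff} applied to \eqref{main def of u}, and similarly writing $D_{xx}u_s$ via Theorem~\ref{main SDE 2 diff}, then combining classical linear-SDE moment bounds for $\partial_1 X^{\gamma_s}$ and its second derivative with the polynomial growth of $Dg, D^2g$ supplied by Assumption~\ref{model payoff ass2}. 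With the dominating function secured, dominated convergence on $\Omega \times [0,T]$ together with Fubini pulls the limit under the $ds$-integral.

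For the second equality, I differentiate the identity $b_t^\epsilon = r^\epsilon - q_t^\epsilon - \tfrac{1}{2}(\sigma_t^\epsilon)^2$ in $\epsilon$ at $\epsilon = 0$ to obtain $\dot{b}_t = \dot{r} - \dot{q}_t - \sigma_t\dot{\sigma}_t$. Substituting into the first line of \eqref{greek formula} and regrouping, the $D_xu_s$ terms pick up the $\dot{r} - \dot{q}_s$ factor, while the $\sigma_s\dot{\sigma}_s$ terms combine into the $(D_{xx}u_s - D_xu_s)$ bracket, yielding the second line.
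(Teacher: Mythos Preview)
Your proposal is correct and follows essentially the same route as the paper: both use the martingale property of $G^\epsilon$ from Proposition~\ref{greek value prop2} to decompose $v_0^\epsilon - v_0$ into an interest-rate piece $u_0(X_0)(e^{-r^\epsilon T}-e^{-rT})$ and the integral piece $-e^{-r^\epsilon T}\mathbb{E}[I^\epsilon]$, then pass to the limit via dominated convergence together with the $L^p$-convergence $X^\epsilon\to X$ (the paper's Lemma~\ref{greeks_app lem1}) and the relation $\dot b=\dot r-\dot q-\sigma\dot\sigma$ for the second equality. The only cosmetic difference is that the paper makes the triangle-inequality splitting of $\epsilon^{-1}I^\epsilon$ explicit---first replacing $(b^\epsilon-b)/\epsilon$ by $\dot b$ at $X^\epsilon$, then replacing $X^\epsilon$ by $X$ in $\dot b$, then in $D_xu$---whereas you package this as ``pointwise convergence plus dominating function''; your more detailed justification of the polynomial bound on $D_xu,D_{xx}u$ via the BSDE representations is exactly what the paper invokes through Theorem~\ref{Ver thm1}.
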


\begin{remark}
  We introduce \cite{fournie2010functional}'s work with respect to the sensitivity analysis using our notation. First, we sketch a model and an option price. The stock model is 
  \begin{align}
    dS_t = r_tS_t\,dt + \sigma_{t}(S)\,dW_t
  \end{align}
  where the deterministic short rate is $r$ and volatility $\sigma$ is a non-anticipative functional satisfying Assumption~\ref{main F ass}. Let $u$ be a locally regular functional in Definition~\ref{ftnl regular def} and $g$ be an option. Assume that for all $t < T$, $u$ satisfies
  \begin{equation}
  \begin{aligned}
    D_{t}u_t(\gamma_{t}) &+ r_tD_xu_t(\gamma_{t})\gamma_{t}(t) + \frac{1}{2}D_{xx}u_t(\gamma_{t})\gamma_{t}^{2}(t)\sigma_t^{2}(\gamma_{t}) = r_tu_t(\gamma_{t}) \\
    u_T(\gamma_{T}) &= g(\gamma_{T})
  \end{aligned}
  \end{equation}
  for $\gamma_{t} \in D([0, t], \mathbb{R}^d)$ and $\gamma_{T} \in D([0, T], \mathbb{R}^d)$. Then, $u_t(S_t)$ is a self-financing portfolio with the initial value $u_0(S_0)$ and position $D_xu_t(S_t)$.

  Now, we turn to sensitivity. Let $(\sigma^{\epsilon})_{\epsilon \in (0, \infty)}$ be families of functionals satisfying the first condition of the Assumption~\ref{main coeff ass1} for each $\epsilon > 0$. 
  We denote the perturbed stock model $S^{\epsilon}$ as the unique solution to 
  \begin{align}
    dS^{\epsilon}_t = r_tS^{\epsilon}_t\,dt + \sigma^{\epsilon}_{t}(S)\,dW_t.
  \end{align}
  Under some additional conditions as an existence of differential $\dot{\sigma}$, 
  we have 
\begin{align}
  \lim_{\epsilon \rightarrow 0} \frac{1}{\epsilon}\mathbb{E}[g(S^{\epsilon}_T) - g(S_T)]e^{-rT}= \mathbb{E} \Big[\int_{0}^{T} \dot{\sigma}_t(S)\sigma_t(S)S^{2}(t)D_{xx}u_t(S_t)e^{-rt}  \Big].
\end{align}
\end{remark}

\section{Example}\label{sec:ex}
\subsection{Coefficients with time integration}\label{subsec:inte}
In this subsection, we consider the no-dividend stock price model with a time integration coefficient as setting $\sigma_t(\eta) = \alpha(\int_{0}^{t}f(t,u)\eta(u)\,du)$ where $\alpha : \mathbb{R} \rightarrow \mathbb{R}$ is a function and $f(t, \cdot) : [0, t] \rightarrow \mathbb{R}$ for each $t \in [0, T]$. The dynamics of the logarithm of stock price follows the SDE 
\begin{align}\label{coeff time model}
  dX_{t} = r -  \frac{1}{2}\alpha^{2}\left( \int_{0}^{t} f(t, u)X_{u}\,du \right)\,dt + \alpha\left( \int_{0}^{t} f(t, u) X_{u} \,du \right)\,dW_{u}.
\end{align}
To ensure that the coefficient $\sigma$ is well-defined, we may assume that for each $t \in [0, T]$, the function $f(t, \cdot) : [0, t] \rightarrow \mathbb{R}$ is in $L^{1}$, i.e., $\lVert f(t, \cdot) \rVert_{L^{1}[0, t]} = \int_{0}^{t} \lvert f(t, u) \rvert \,du < \infty$. Including this condition, there is a sufficient condition for the existence and uniqueness of solution \eqref{coeff time model} and Assumption~\ref{main coeff ass1}.

\begin{prop} \label{coeff prop1}
  Assume that for each $t \in [0, T]$, the norm $\lVert f(t, \cdot) \rVert_{L^{1}[0, t]} = \int_{0}^{t} \lvert f(t, u) \rvert \,du$ is finite. If $\alpha$ and $\alpha^{2}$ are Lipschitz continuous in $\mathbb{R}$, then a solution to the SDE \eqref{coeff time model} uniquely exists. Moreover, if the function $\alpha$ in \eqref{coeff time model} satisfies 
  \begin{enumerate}
    \item [(i)] $\alpha$ is in $C^{2}(\mathbb{R})$, i.e., is continuously differentiable up to order 2,
    \item [(ii)] for each $\eta \in D([0, T], \mathbb{R})$, a path $(  \int_{0}^{t}f(t, u)\eta(u)\,du  )_{t \in [0, T]}$ is c\`{a}dl\`{a}g with respect to $t$,
  \end{enumerate}
  then Assumption~\ref{main coeff ass1} holds.
\end{prop}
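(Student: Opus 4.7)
The plan is to reduce everything to the bounded linear functional
\begin{equation}
I_t : D([0,t],\mathbb{R}) \to \mathbb{R}, \qquad I_t(\eta) := \int_0^t f(t,u)\eta(u)\,du,
\end{equation}
which satisfies $|I_t(\eta)| \le \|f(t,\cdot)\|_{L^1[0,t]}\,\|\eta\|_t$, together with scalar analysis of $\alpha$. Since $\sigma_t(\eta)=\alpha(I_t(\eta))$ and $b_t(\eta) = r - \tfrac{1}{2}\alpha^2(I_t(\eta))$, both coefficients are compositions of the bounded linear map $I_t$ with a smooth scalar function, so every regularity property transfers from $\alpha$ via the chain rule.

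For existence and uniqueness, I would first establish that $\sigma$ and $b$ are functional Lipschitz: for paths $\eta,\tilde\eta \in D([0,T],\mathbb{R})$,
\begin{equation}
|\sigma_t(\eta) - \sigma_t(\tilde\eta)| \le L_\alpha\,|I_t(\eta)-I_t(\tilde\eta)| \le L_\alpha\,\|f(t,\cdot)\|_{L^1[0,t]}\,\|\eta-\tilde\eta\|_t,
\end{equation}
and analogously for $b_t$ using the Lipschitz constant of $\alpha^2$. Since $\|f(t,\cdot)\|_{L^1[0,t]}$ is finite for every $t \in [0,T]$, standard SDE theory for functional Lipschitz coefficients (e.g.\ Protter~V.3, Theorem~7) delivers the unique strong solution of \eqref{coeff time model}.

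For the \emph{moreover} part, I would verify Assumption~\ref{main coeff ass1} stepwise. Assumption~\ref{main F ass}(i) (càdlàg sample paths of $t\mapsto \sigma_t(\eta), b_t(\eta)$) is immediate from hypothesis~(ii) composed with the continuity of $\alpha$ and $\alpha^2$; the local functional Lipschitz condition on the open exhaustion $A_k = \{\eta : \|\eta\|<k\}$ follows from the same estimate as above. For the Fréchet derivatives, the chain rule applied to $\alpha\circ I_t$ yields
\begin{align}
D\sigma_t(\eta)(\delta) &= \alpha'\bigl(I_t(\eta)\bigr)\,I_t(\delta), \\
D^2\sigma_t(\eta)(\delta,\gamma) &= \alpha''\bigl(I_t(\eta)\bigr)\,I_t(\delta)\,I_t(\gamma),
\end{align}
with analogous formulas for $b_t$ involving $\alpha\alpha'$ and $(\alpha\alpha')'$. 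Verifying these are genuine Fréchet derivatives in the sense of Definition~\ref{main Fre def} reduces to the one-dimensional Taylor expansion of $\alpha$ around $I_t(\eta)$, combined with the uniform bound $|I_t(\delta)| \le \|f(t,\cdot)\|_{L^1[0,t]}\,\|\delta\|$ on the linear increment.

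The main technical hurdle is the local Lipschitz estimate \eqref{main prop2 ass} for $D\sigma, D^2\sigma$ (and the corresponding estimates for $Db, D^2b$). Via the factorization above, it reduces to local Lipschitz continuity of the scalars $\alpha'$ and $\alpha''$ on a compact interval containing $\{I_t(\tilde\eta) : \|\tilde\eta - \eta\|\le \epsilon\}$. For $\alpha'$ the mean value theorem with bound $\sup|\alpha''|$ on that compact interval does the job and handles the first-order estimate. The second-order estimate, however, requires local Lipschitz continuity of $\alpha''$ itself, which is strictly stronger than $\alpha\in C^2(\mathbb{R})$; the natural resolution is to read the hypothesis as $\alpha$ being $C^2$ with $\alpha''$ locally Lipschitz, after which the prefactor $\|f(t,\cdot)\|_{L^1[0,t]}^{3}$ supplies the required product structure $\|\eta-\tilde\eta\|\,\|\gamma\|\,\|\delta\|$, with the local Lipschitz constant of $\alpha''$ absorbed into $K(\eta)$.
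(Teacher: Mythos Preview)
The paper does not actually supply a proof of Proposition~\ref{coeff prop1}; it is stated and then immediately followed by examples and a remark, so there is nothing to compare against line by line. Your approach---factoring through the bounded linear functional $I_t(\eta)=\int_0^t f(t,u)\eta(u)\,du$ and pulling all regularity back to scalar properties of $\alpha$ via the chain rule---is exactly the natural one, and it is almost certainly what the authors had in mind when they left the verification to the reader.

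Your identification of a gap is legitimate and worth flagging. The second line of \eqref{main prop2 ass} asks for $D^{2}\sigma$ (and $D^{2}b$) to be locally Lipschitz in the base point, which through your factorisation becomes local Lipschitz continuity of $\alpha''$ (respectively of $(\alpha\alpha')'=(\alpha')^{2}+\alpha\alpha''$). Mere membership $\alpha\in C^{2}(\mathbb{R})$ gives only continuity of $\alpha''$, not a Lipschitz modulus, so strictly speaking hypothesis~(i) as written is insufficient for the full Assumption~\ref{main coeff ass1}. The paper's own Remark immediately after the proposition points to bounded $C^{2}$ functions such as sine and cosine as the intended examples, and for those $\alpha''$ is of course globally Lipschitz, so the issue is invisible in the applications the authors have in mind. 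Your proposed fix---reading (i) as $\alpha\in C^{2}$ with $\alpha''$ locally Lipschitz (equivalently $\alpha\in C^{2,1}_{\mathrm{loc}}$)---is the clean way to close the gap, and with that amendment your argument goes through verbatim.
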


\begin{ex}
There are two examples of the function $f$. The first is the Dirac delta function. This means that we define an integral $\int_{0}^{t} f(t, u)\eta(u)\,du = \eta(k(t))$ for some $k(t) \in [0, t]$. Although $f$ is not actually a function, the proof of Proposition~\ref{coeff prop1} holds. Using this, we can understand that the model \eqref{coeff time model} is a generalization of the Markov process.

The other example is to define $f(t, u) = 1/t$ for each $t \in (0, T]$ and define $f(0, 0)$
 as a Dirac delta function at $0$. Then, the path $(  \int_{0}^{t}f(t, u)\eta(u)\,du )_{t \in (0, T]}$ converges to $\eta(0) = \int_{0}^{0}f(0, 0)\eta(u)\,du$ as $t \rightarrow 0$. The second condition of Proposition~\ref{coeff prop1} is easily seen.
Considering these two examples, the last condition of Proposition~\ref{coeff prop1} may not be relaxed.
\end{ex}

\begin{remark}
  The typical example of $\alpha$ satisfying both Lipschitz continuity and differentiability is bounded and $C^{2}(\mathbb{R})$. This includes the sine and cosine functions. If we already know that a solution to the SDE \eqref{coeff time model} exists, we can drop the condition of Lipschitz continuity on $\mathbb{R}$. Moreover, we can reduce a domain of $\alpha$ to $ \{ \int_{0}^{t}f(t, u) X_{u}(\omega)\,du\; | \;t \in [0, T], \omega \in \Omega \}$, which may be a subset of $\mathbb{R}$. 
\end{remark}


\subsection{Greeks}\label{subsec:exG}
In this section, we study the sensitivity of an option price as the short rate $r$ and volatility $\sigma$ change. We follow the notation in Section~\ref{subsec:G}. 
First, we discuss the rho value, which is the partial derivative of the option price with respect to the short rate term $r$. Applying the families of coefficients $b^{\epsilon}_t = b_t + \epsilon$ and $\sigma^{\epsilon}_t = \sigma_t$, we obtain $\dot{\sigma} = 0$ and $\dot{r} = 1$. Hence, the sensitivity of the option price \eqref{greek sensitivity form} is actually the value rho. From Theorem~\ref{greek formula thm}, we obtain 
\begin{align}
  \rho = \frac{\partial v_0(X_0)}{\partial r} = -u_0(X_0)e^{-rT}T - \mathbb{E} \Big[\int_{0}^{T} D_{x}u_{s}(X_s)\,ds \Big]e^{-rT}.
\end{align}

Second, we consider the case in which the volatility changes and the short rate does not. As we study in Section~\ref{subsec:G}, the sensitivity differs from how we define the family of volatility $\sigma^{\epsilon}$. We study two examples in this subsection.

\begin{ex}%
  Define the family of volatility $\sigma^{\epsilon}_t(\eta) = \sigma_t(\eta + \epsilon \mathds{1}_{[0, T]})$. The differential $\dot{\sigma}$ is written as
  \begin{align}
   \dot{\sigma}_t(\eta) =  \alpha'\left( \int_{0}^{t} f(t,u)\eta(u)\, du \right) \int_{0}^{t}f(t,u)\,du,
  \end{align}
  because the inequality \eqref{greek diff ineq} holds from the Fr\'{e}chet derivative of $\sigma$.
   To make the differential of the short rate zero, we define the family of $b^{\epsilon}$ by $b^{\epsilon}_t = r - (\sigma^{\epsilon}_t)^{2}/2$. It is easily seen that the families of coefficients $b^{\epsilon}$, $\sigma^{\epsilon}$ satisfy all conditions of Assumption~\ref{greek assum}. 
  Similarly, we consider the family of the volatility $\sigma^{\epsilon}_t(\eta) = \alpha( \int_{0}^{t} (f(t, u) + \epsilon) \eta(u)\,du )$. Then, differential $\dot{\sigma}$ is written as 
  \begin{align}
    \dot{\sigma}_t(\eta) =  \alpha'\left( \int_{0}^{t} f(t,u)\eta(u) \,du \right) \int_{0}^{t}\eta(u)\,du,
   \end{align}
 and the sensitivity formula can be calculated using the formula \eqref{greek formula}.
\end{ex}

\begin{ex}
  Assume that for each $u \in [0, T]$, there exists a differential $\dot{f}$ of $f$ such that 
  \begin{align}
    \left\lvert f(t+\epsilon , u) - f(t, u) - \epsilon \dot{f}(t, u) \right\rvert  \rightarrow 0 ,
  \end{align}
   as $\epsilon \rightarrow 0$ for all $t \in [u, T]$. We define the family of the volatility  $\sigma^{\epsilon}_{t}(\eta) = \alpha(\int_{0}^{t}f(t + \epsilon,u)(\eta(u)) du)$.
  From the chain rule, we obtain the differential of $\sigma$ as 
  \begin{align}
    \dot{\sigma}_t(\eta)  = \alpha'\left( \int_{0}^{t} f(t,u)\eta(u) \,du \right) \int_{0}^{t}\dot{f}(t,u)\,du.
  \end{align} 
  The sensitivity of an option price is calculated using the formula \eqref{greek formula}.
\end{ex}

\section{Conclusion}\label{sec:conclusion}

We studied the differentiability of solutions to   path-dependent SDEs.
Given an SDE with path-dependent coefficients having  Fr\'{e}chet derivatives, 
for each $\gamma_t, \delta_t \in D([0, t], \mathbb{R}^d)$, 
we proved that the SDE solution $X^{\gamma_{t}}$ is differentiable with respect to the initial path $\gamma_t$ in the direction of $\delta_t$, and that the directional derivative $D_{\delta_t}X^{\gamma_{t}}$ 
is given as a solution to
    \begin{equation} 
\begin{aligned}
& D_{\delta_t}X^{\gamma_{t}}(s) = \delta_{t}(t) + \int_{t}^{s} Db_r(X^{\gamma_{t}})(D_{\delta_{t}}X^{\gamma_{t}}) \,dr + \int_{t}^{s} D\sigma_r(X^{\gamma_{t}})(D_{\delta_{t}}X^{\gamma_{t}}) \,dW_{r} &&\quad t \le s \le T,
\\ & D_{\delta_t}X^{\gamma_{t}}(s) = \delta_{t}(s) &&\quad 0 \le s < t,
\end{aligned}  
\end{equation}
as given in  \eqref{model sdeDX}.

PDE representations and sensitivities of option prices under stock models described in Section~\ref{subsec:PSDE} are studied as applications.
Under Assumptions \ref{main coeff ass1} and \ref{model payoff ass2},
an option price   $u_t(\gamma_{t})=  \mathbb{E}[ g(X^{\gamma_{t}}_{T}) | \mathcal{F}_{t}]$
is differentiable with respect to the time and path, and is given as a solution to PPDE \eqref{main PPDE}:
\begin{equation}
\begin{aligned}
D_{t}u_t(\gamma_{t}) &+ \langle r\mathds{1}^{d} - q_t(\gamma_{t}) ,  D_{x}u_t(\gamma_{t})  \rangle  \\
&+ \frac{1}{2} \textnormal{tr}\left(\sigma_t(\gamma_{t}) \cdot ( (D_{xx}u_t(\gamma_{t}) - \textnormal{diag}_d(D_xu_t(\gamma_t) )) \sigma^{\top}_{t}(\gamma_{t})  \right) = 0  \, &&\textnormal{for }\,  \gamma_{t} \in D([0, t], \mathbb{R}^d)\,, \\
u_T(\gamma_{T})      &= g(\gamma_{T})  \, &&\textnormal{for } \, \gamma_{T} \in D([0, T], \mathbb{R}^d)\,.
\end{aligned}
\end{equation}
Conversely, a regular  solution $u$ of the above PPDE \eqref{main PPDE} satisfies 
\begin{equation}
  u_t(\gamma_{t}) = Y^{\gamma_{t}}(t), \qquad \textnormal{for} \qquad \gamma_{t} \in D([0, t], \mathbb{R}^d),
  \end{equation}
  where $Y^{\gamma_{t}}$ is a solution to BSDE \eqref{model bsdeY}.

Furthermore, we introduced formulas for Greeks of option prices for small changes of the short rate $r$ and volatility $\sigma$. For the families of coefficients $(b^{\epsilon})_{\epsilon \in I}$ and $(\sigma^{\epsilon})_{\epsilon \in I}$, we considered the perturbed stock model $S^{\epsilon}=S_{0}e^{X^{\epsilon}}$ where
\begin{equation} 
  \begin{aligned}
      dX^{\epsilon}_{t}& = b^{\epsilon}_t(X^{\epsilon})\,dt + \sigma^{\epsilon}_{t}(X^{\epsilon})\,dW_{t}, \quad X^{\epsilon}_{0} = 0\,.
  \end{aligned}
  \end{equation}
Then, the perturbed  option price is 
$u_0^\epsilon(X_0)= e^{-r^\epsilon T} \mathbb{E}[g(X_{T}^\epsilon) ]$, and 
under Assumption~\ref{greek assum}
the Greek value is 
expressed as 
\begin{equation}
\begin{aligned} 
    - u_0(X_{0})e^{-rT}\dot{r}T
  - \mathbb{E}\left[\int_{0}^{T} D_{x}u_s(X_s) \dot{b}_{s}(X) +  D_{xx}u_s(X_s) \dot{\sigma}_{s}(X)\sigma_{s}(X) \,ds\right]e^{-rT}.
\end{aligned}
\end{equation} 

As an example, we studied the stock model
having coefficients with time integration form.
The logarithm of the stock price is a solution to SDE \eqref{coeff time model},
\begin{align}
  dX_{t} = r -  \frac{1}{2}\alpha^{2}\left( \int_{0}^{t} f(t, u)X_{u}\,du \right)\,dt + \alpha\left( \int_{0}^{t} f(t, u) X_{u} \,du \right)\,dW_{u},
\end{align}
where $\alpha : \mathbb{R} \rightarrow \mathbb{R}$ is a smooth function and $f(t, \cdot) : [0, t] \rightarrow \mathbb{R}$ is integrable for each $t \in [0, T]$. 
Proposition~\ref{coeff prop1} provided 
conditions on  $\alpha$ and $f$ to satisfy Assumption~\ref{main coeff ass1}. 
We then applied the Greek formulas in \eqref{greek formula} to several types of perturbations. 



\begin{appendices}

\section{Fr\'{e}chet derivative} \label{Appendix:Fre}
In this appendix, we prove Propositions~\ref{main Fre prop1} and \ref{main Fre 2 prop2} as well as Theorems~\ref{main SDE diff} and \ref{main SDE 2 diff}. For simplicity, we only deal with the case of $N = d = 1$. The flow of proofs follows that of Theorem~V.39 in \cite{protter2005stochastic}. First, we define the necessary terms.

Recall that $D([0, T], \mathbb{R})$ is the collection of functions with c\`{a}dl\`{a}g paths from $[0, T]$ to $\mathbb{R}$ and $\mathbb{D}$ is the space of adapted right continuous processes with left limit (RCLL processes for fixed maturity $T \in \mathbb{R}_+$.
For a process $ H \in \mathbb{D} $, we use notations in \eqref{ftnl notation} and define $S^p$ norms in $\mathbb{D}$ for $p \ge 2$
\begin{align}
  \left\lVert H \right\rVert_{S^{p}} =  \Big\lVert \sup_{t \in [0, T]} |H_t| \Big\rVert_{L^p}   = \mathbb{E}[\sup_{t \in [0, T]} \left\lvert H_{t} \right\rvert^{p} ]^{\frac{1}{p}}.
\end{align}
We define the $\mathcal{H}^p$-norm of semimartingale $Z$ with $Z_0=0$ as \'{E}mery norm. See Chapter~5 in \cite{protter2005stochastic} for detail. Note that from Theorem~V.2 in \cite{protter2005stochastic}, $H^{p}$ norm is stronger than $S^{p}$ norm

The following proposition says that the Fr\'{e}chet derivative of non-anticipative functional at $t$ depends only on the path on $[0,t]$. It is understood that the Fr\'{e}chet derivative of a non-anticipative functional is a non-anticipative functional. Recall that we write $\eta^t \in D([0, T], \mathbb{R})$ as the stopped path of $\eta$ at time $t \in [0, T]$.
\begin{prop} \label{Fre prop1}
  Let $F : D([0, T], \mathbb{R})\rightarrow D([0, T], \mathbb{R})$ be a non-anticipative functional with a Fr\'{e}chet derivative and $\eta, \tilde{\eta}, \gamma, \tilde{\gamma} \in  D([0, T], \mathbb{R})$. If $\eta^{t} = \tilde{\eta}^{t}, \gamma^{t} = \tilde{\gamma}^{t}$ for some $t \in [0, T]$, then $DF_{t}(\gamma)(\eta) = DF_{t}(\tilde{\gamma})(\tilde{\eta})$.
\end{prop}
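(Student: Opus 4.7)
The plan is to deduce the claim from two observations: first, that Fr\'echet differentiability of $F$ implies a G\^ateaux-style limit formula evaluable coordinatewise at each time $t$; and second, that this formula inherits non-anticipativity directly from $F$. Since the statement has two symmetric halves (independence of the base point $\gamma$ from values after $t$, and independence of the direction $\eta$ from values after $t$), I will handle both simultaneously by comparing $DF_t(\gamma)(\eta)$ with $DF_t(\tilde\gamma)(\tilde\eta)$ under the joint hypothesis $\gamma^t=\tilde\gamma^t$, $\eta^t=\tilde\eta^t$.

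First, I would establish the G\^ateaux formula
\begin{equation}
DF_t(\gamma)(\eta) \;=\; \lim_{h\to 0}\frac{F_t(\gamma+h\eta)-F_t(\gamma)}{h}.
\end{equation}
This follows by specializing the Fr\'echet limit in Definition~\ref{main Fre def} to the direction $u=h\eta$: for $\eta\neq 0$, $\lVert h\eta\rVert = |h|\,\lVert\eta\rVert\to 0$ as $h\to 0$, so
\begin{equation}
\frac{\lVert F(\gamma+h\eta)-F(\gamma)-DF(\gamma)(h\eta)\rVert}{|h|\,\lVert\eta\rVert}\;\longrightarrow\; 0.
\end{equation}
Linearity of $DF(\gamma)$ gives $DF(\gamma)(h\eta)=h\,DF(\gamma)(\eta)$, and evaluation at time $t$ is bounded by the sup-norm, so the displayed coordinatewise limit holds. (The case $\eta=0$ is trivial since $DF_t(\gamma)(0)=0$ by linearity.)

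Next, I would exploit the non-anticipative property of $F$. Under the hypotheses $\gamma^t=\tilde\gamma^t$ and $\eta^t=\tilde\eta^t$, we have $(\gamma+h\eta)^t=\gamma^t+h\eta^t=\tilde\gamma^t+h\tilde\eta^t=(\tilde\gamma+h\tilde\eta)^t$ for every scalar $h$. Non-anticipativity of $F$ therefore yields
\begin{equation}
F_t(\gamma+h\eta)=F_t(\tilde\gamma+h\tilde\eta),\qquad F_t(\gamma)=F_t(\tilde\gamma).
\end{equation}
Substituting into the G\^ateaux formula above and into the analogous formula for $\tilde\gamma,\tilde\eta$ gives
\begin{equation}
DF_t(\gamma)(\eta)=\lim_{h\to 0}\frac{F_t(\tilde\gamma+h\tilde\eta)-F_t(\tilde\gamma)}{h}=DF_t(\tilde\gamma)(\tilde\eta),
\end{equation}
which is exactly the claim.

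There is no real obstacle here; the subtlety is only bookkeeping, namely observing that the Fr\'echet limit in the ambient sup-norm $\lVert\cdot\rVert=\lVert\cdot\rVert_T$ dominates the pointwise-in-$t$ difference, so the G\^ateaux reduction is legal and no extra regularity of the evaluation map $\eta\mapsto\eta(t)$ is needed beyond boundedness. Once that reduction is in hand, the conclusion is an immediate consequence of $F$ being non-anticipative together with linearity of $DF(\gamma)$.
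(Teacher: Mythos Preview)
Your proof is correct and follows essentially the same approach as the paper: both reduce the Fr\'echet derivative to a G\^ateaux-type directional limit at the fixed time $t$, then invoke non-anticipativity of $F$ to replace $(\gamma,\eta)$ by $(\tilde\gamma,\tilde\eta)$ inside that limit. Your presentation is somewhat cleaner, in particular your explicit handling of the trivial case $\eta=0$ and the remark that evaluation at $t$ is dominated by the sup-norm.
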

\begin{proof} Let $\delta \in \mathbb{R}_+ $ be the scale of $\eta$. Apply Definition~\ref{main Fre def} to $u = \delta\eta$ and $v = \gamma$. It follows that 
  \begin{equation}
    \begin{aligned}
    0 & = \lim_{\delta \rightarrow 0} \frac{\left\lVert F(\gamma + \delta\eta ) - F(\gamma) - DF(\gamma)(\delta\eta) \right\rVert}{ \left\lVert \delta \eta \right\rVert }
    \ge \lim_{\delta \rightarrow 0} \frac{\left\lvert F_t(\gamma + \delta\eta ) - F_t(\gamma) - DF_t(\gamma)(\delta\eta) \right\rvert}{\left\lVert \delta \eta \right\rVert}                   \\
     & = \lim_{\delta \rightarrow 0} \left\lvert \frac{F_t(\tilde{\gamma} + \delta\tilde{\eta} ) - F_t(\tilde{\gamma})} {\delta \left\lVert \tilde{\eta} \right\rVert }  - DF_t(\gamma)  \left( \frac{\eta}{\left\lVert \tilde{\eta} \right\rVert } \right) \right\rvert \frac{\left\lVert \tilde{\eta} \right\rVert }{ \left\lVert \eta \right\rVert } .
    \end{aligned}
  \end{equation}

  since $DF(\gamma)(\cdot)$ is linear operator in Definition~\ref{main Fre def}. 
  By applying Definition~\ref{main Fre def} to $u = \delta\tilde{\eta}$ and $v = \tilde{\gamma}$, we can obtain what we desire.
\end{proof}

Let $DF$ be the Fr\'{e}chet derivative of non-anticipative funtional $F$. For fixed $t \in [0,T]$ and the adapted processes $X$ and $Y$, the value $DF_t(X)(Y)$ depends only on the paths of $X$ and $Y$ on $[0,t]$. Note that the stopped path $X^t(\cdot) := X(t \wedge \cdot)$ is $\mathcal{F}_t$-measurable. From the above proposition, we have $DF_t(X)(Y) = DF_t(X^t)(Y^t) $ and $DF_t(X^t)(Y^t)$ is $\mathcal{F}_t$-measurable. Thus, as a stochastic process, the operator $DF(X)(Y) : [0, T] \times \Omega \rightarrow \mathbb{R}$ is adapted.

Let us repeat the setup of Proposition~\ref{main Fre prop1}. Let $Z$ be a semimartingale with $Z_{0} = 0$ and $F : D([0, T], \mathbb{R}) \rightarrow D([0, T], \mathbb{R})$ be locally functional Lipschitz continuous with Fr\'{e}chet derivative $DF$. Consider a solution $(X,DX)$ to the SDE with non-anticipative functional coefficient 
\begin{align}
  X^{x}_{t}  & = x + \int_{0}^{t}F_{s-}(X^{x})\,dZ_{s}, \label{Fre SDE_X}           \\
  DX^{x}_{t} & = 1 + \int_{0}^{t}DF_{s-}(X^{x})(DX^{x})\,dZ_{s}. \label{Fre SDE_DX}
\end{align}
Because $F$ and $DF$ are locally functional Lipschitz continuous, we know that the solutions to \eqref{Fre SDE_X} and \eqref{Fre SDE_DX} uniquely exist. 

The following lemma is the restatement of the Lemma~V.2 in \cite{protter2005stochastic}. It says that the solution to SDE \eqref{Fre SDE_X} uniquely exists in $S^{p}$ and its $S^{p}$ norm is estimated independently of the coefficient of the SDE. 

\begin{lemma} \label{Fre Lem1}
  Let $1 \le p \le \infty$, let $J \in S^{p}$, let $F$ be functional Lipschitz continuous satisfying $F(0) = 0$. Then, the equation
  \begin{equation}
    X_{t} = J_{t} + \int_{0}^{t}F_{s-}(X)\,dZ_{s}
  \end{equation}
  has a solution in $S^{p}$. It is unique, and moreover, $\lVert X \rVert _{S^{p}} \le C(p, Z)\lVert J \rVert _{S^{p}}$, where $C(p,Z)$ is a constant depending only on $p$ and $Z$.
\end{lemma}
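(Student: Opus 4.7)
The plan is Picard iteration in $S^{p}$ with contraction measured via the Émery $\mathcal{H}^{p}$-norm, which by the discussion preceding the lemma dominates the $S^{p}$-norm of stochastic integrals. I would define $X^{0} := J$ and $X^{n+1}_{t} := J_{t} + \int_{0}^{t} F_{s-}(X^{n})\,dZ_{s}$, and check inductively that each $X^{n}$ lies in $S^{p}$. Since $F(0) = 0$, functional Lipschitz continuity gives the pointwise bound $|F_{s}(X^{n})| \le K(s) \lVert X^{n} \rVert_{s}$, which combined with the stochastic integral estimate $\lVert H \cdot Z \rVert_{\mathcal{H}^{p}} \le C \lVert H \rVert_{S^{\infty}} \lVert Z \rVert_{\mathcal{H}^{p}}$ from Chapter V of \cite{protter2005stochastic} (with a standard truncation to get $H \in S^{\infty}$) controls $\lVert X^{n+1} \rVert_{S^{p}}$ in terms of $\lVert J \rVert_{S^{p}}$ and $\lVert X^{n} \rVert_{S^{p}}$.

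The Cauchy step uses $X^{n+1} - X^{n} = (F_{s-}(X^{n}) - F_{s-}(X^{n-1})) \cdot Z$; functional Lipschitz together with the same stochastic integral inequality gives $\lVert X^{n+1} - X^{n} \rVert_{S^{p}} \le C' K \lVert X^{n} - X^{n-1} \rVert_{S^{p}} \cdot \lVert Z \rVert_{\mathcal{H}^{p}}$. Since $C' K \lVert Z \rVert_{\mathcal{H}^{p}[0,T]}$ need not be less than one, I would partition $[0,T]$ by stopping times $0 = T_{0} < T_{1} < \cdots < T_{N} = T$ such that the restricted Émery norm satisfies $C' K \lVert Z \rVert_{\mathcal{H}^{p}(T_{i}, T_{i+1}]} \le 1/2$ on every piece; this is possible because the $\mathcal{H}^{p}$-norm of a semimartingale is absolutely continuous along stopping times. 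On each subinterval the Picard map is then a strict contraction in $S^{p}$, producing a fixed point there; pasting across $i = 0, \dots, N-1$ yields a global $X \in S^{p}$.

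Uniqueness follows from the same contraction: two $S^{p}$-solutions $X, \tilde{X}$ satisfy $\lVert X - \tilde{X} \rVert_{S^{p}(T_{i}, T_{i+1}]} \le \tfrac{1}{2} \lVert X - \tilde{X} \rVert_{S^{p}(T_{i}, T_{i+1}]}$, hence coincide on each subinterval, and induction across the finite partition transfers this to $[0,T]$. For the a priori bound, applying the same estimate directly to the fixed-point equation gives $\lVert X \rVert_{S^{p}(T_{i}, T_{i+1}]} \le C'' \lVert J \rVert_{S^{p}} + \tfrac{1}{2} \lVert X \rVert_{S^{p}(T_{i}, T_{i+1}]}$, whence $\lVert X \rVert_{S^{p}(T_{i}, T_{i+1}]} \le 2 C'' \lVert J \rVert_{S^{p}}$; summing across the finite partition yields $\lVert X \rVert_{S^{p}} \le C(p, Z) \lVert J \rVert_{S^{p}}$.

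The main obstacle is forcing a genuine contraction when $Z$ is a general semimartingale whose $\mathcal{H}^{p}$-norm on $[0,T]$ may be large relative to the functional Lipschitz constant. The decisive device is the stopping-time splitting of $[0,T]$ into finitely many pieces on which $\lVert Z \rVert_{\mathcal{H}^{p}}$ is small; once this is in place, the rest reduces to Banach's fixed-point theorem applied piece by piece, with the final constant $C(p,Z)$ collecting the number of partition pieces together with the constants from the stochastic integral inequality.
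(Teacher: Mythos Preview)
The paper does not actually prove this lemma: it simply states it as a restatement of Lemma~V.2 in \cite{protter2005stochastic} and moves on. Your sketch is therefore not being compared against an alternative argument but against the standard proof in Protter's Chapter~V, and in outline---Picard iteration, contraction via Emery's inequality, and a stopping-time decomposition of $[0,T]$ to force the contraction constant below $1$---you are following exactly that route.

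One technical point deserves correction. You invoke Emery's inequality in the form $\lVert H \cdot Z \rVert_{\mathcal{H}^{p}} \le C \lVert H \rVert_{S^{\infty}} \lVert Z \rVert_{\mathcal{H}^{p}}$ and then claim this yields $\lVert X^{n+1} - X^{n} \rVert_{S^{p}} \le C' K \lVert X^{n} - X^{n-1} \rVert_{S^{p}} \lVert Z \rVert_{\mathcal{H}^{p}}$. That does not follow: with $H = F(X^{n}) - F(X^{n-1})$, the functional Lipschitz bound gives $\lVert H \rVert_{S^{\infty}} \le K \lVert X^{n} - X^{n-1} \rVert_{S^{\infty}}$, not the $S^{p}$-norm, so the loop does not close. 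The correct factorization is the dual one, $\lVert H \cdot Z \rVert_{\mathcal{H}^{p}} \le C \lVert H \rVert_{S^{p}} \lVert Z \rVert_{\mathcal{H}^{\infty}}$, after which $\lVert H \rVert_{S^{p}} \le K \lVert X^{n} - X^{n-1} \rVert_{S^{p}}$ does close the iteration. Correspondingly, the stopping-time partition should be chosen so that $\lVert Z \rVert_{\mathcal{H}^{\infty}}$ (not $\mathcal{H}^{p}$) is small on each piece; this is precisely Protter's device of reducing to the case where $Z$ is ``small'' in $\mathcal{H}^{\infty}$ via prelocalization, with a separate accounting for the jump at each stopping time. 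Once you make that swap, your argument goes through and matches the cited source.
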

Note that we can also apply the lemma to the solution to the SDE \eqref{Fre SDE_DX} since the operator $DF(\eta)(\cdot)$ for $\eta \in D([0, T], \mathbb{R})$ is linear.

For the continuity of $X^{x}$ in \eqref{Fre SDE_X}, we apply Theorem~V.37 in \cite{protter2005stochastic}. Using continuity, we can extend Theorem~V.16 in \cite{protter2005stochastic}. Let $\pi = \{0 = T_{0} \le T_{1} \le \cdots T_{k} \le T \}$ denote a finite sequence of finite stopping time. The sequence $\pi$ is called a random partition.
We say a sequence of random partition $\pi_{n} = \{0 = T^{n}_{0} \le T^{n}_{1} \le \cdots T^{n}_{k_{n}} \le T \}$ tends to the identity if
\begin{enumerate} \label{Fre def of tend to identity}
  \item [(i)] $\lim_{n}\sup_{k} T^{n}_{k} = T \; a.s$ and
  \item [(ii)] $\lVert \pi_{n} \rVert := \sup_{k} \lvert T^{n}_{k+1} - T^{n}_{k} \rvert$ converges to 0 $a.s$.
\end{enumerate}

For a stochastic process $Y$, we define approximations of $Y$ as 
\begin{align} \label{Fre def lc part}
  Y^{\pi} \equiv Y_{0} \mathds{1}_{\{0\} } + \sum_{j = 0}^{k} Y_{T_{j}}1_{(T_{j}, T_{j+1}] }, \quad 
  Y^{\pi+} \equiv \sum_{j=0}^{k} Y_{T_{j}}1_{[T_{j}, T_{j+1}) } + Y_T \mathds{1}_{\{T\} }.
\end{align}
If Y is adapted and c\`{a}dl\`{a}g (i.e. $Y \in \mathbb{D}$), then $(Y^{\pi}(s))_{s \ge 0}$ is left continuous with right limits and adapted and $(Y^{\pi+}(s))_{s \ge 0}$ is right continuous. 

The next lemma is an extension of Theorem~V.16 in \cite{protter2005stochastic} to functional Lipschitz continuity.
Using this lemma, we can approximate solutions to the SDEs \eqref{Fre SDE_X} and \eqref{Fre SDE_DX} using solutions to SDEs with function coefficients. 
\begin{lemma} \label{Fre Lem2}
  Suppose that $F$ is functional Lipschitz continuous. Let $X$ be a solution to SDE \eqref{Fre SDE_X} and $X^{(\pi)}$ be a solution to the following SDE:
  \begin{align}
    X^{(\pi)}_{t} = x + \int_{0}^{t} F_s((X^{(\pi)})^{\pi+})^{\pi} \,dZ_s
  \end{align}
  for a random partition $\pi$. If $\pi_{n}$ is a sequence of random partitions tending to the identity, then $X^{(\pi_{n})}$ tends to $X$ in $S^{p}$.
\end{lemma}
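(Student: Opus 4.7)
The plan is to adapt Protter's proof of Theorem V.16 to functional Lipschitz coefficients, using Lemma~\ref{Fre Lem1} as a Gronwall-type tool. Set $Y^n := X^{(\pi_n)}$. From the defining equations for $Y^n$ and $X$,
\begin{equation*}
Y^n_t - X_t = \int_0^t \Bigl( F_{s}\bigl((Y^n)^{\pi_n+}\bigr)^{\pi_n} - F_{s}(X) \Bigr) \, dZ_s.
\end{equation*}
I would split the integrand as $(\mathrm{I}) + (\mathrm{II}) + (\mathrm{III})$, where
\begin{align*}
(\mathrm{I}) &:= F_{s}\bigl((Y^n)^{\pi_n+}\bigr)^{\pi_n} - F_{s}\bigl(X^{\pi_n+}\bigr)^{\pi_n}, \\
(\mathrm{II}) &:= F_{s}\bigl(X^{\pi_n+}\bigr)^{\pi_n} - F_{s}(X)^{\pi_n}, \\
(\mathrm{III}) &:= F_{s}(X)^{\pi_n} - F_{s}(X).
\end{align*}
The first term $(\mathrm{I})$ couples to the unknown $Y^n - X$; by functional Lipschitz continuity of $F$ together with the trivial bound $\|\eta^{\pi_n+}\|_s\le \|\eta\|_s$, it is controlled by $K(T)\,\|Y^n - X\|_s$ and so defines a functional Lipschitz driver $\widetilde{F}^n$ with constant independent of $n$. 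Terms $(\mathrm{II})$ and $(\mathrm{III})$ are free of $Y^n$ and will serve as a forcing process.

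With this decomposition, $Y^n - X$ solves an SDE of the form
\begin{equation*}
(Y^n - X)_t = J^n_t + \int_0^t \widetilde{F}^n_{s}(Y^n - X)\, dZ_s, \qquad J^n_t := \int_0^t \bigl((\mathrm{II})_s + (\mathrm{III})_s\bigr)\, dZ_s.
\end{equation*}
Applying Lemma~\ref{Fre Lem1} to this equation (with $J^n \in S^p$ as the free term) would yield $\|Y^n - X\|_{S^p} \le C(p,Z)\,\|J^n\|_{S^p}$, so the task reduces to showing $\|J^n\|_{S^p}\to 0$.

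For the forcing, I would argue as follows. The term $(\mathrm{III})$ converges to zero pointwise in $(s,\omega)$ because $F_\cdot(X)$ is c\`{a}dl\`{a}g and the partition mesh shrinks, and one obtains $\|(\mathrm{III})\|_{S^p}\to 0$ by dominated convergence, with an $S^p$-majorant of $F_\cdot(X)$ produced from functional Lipschitz continuity of $F$ together with $X\in S^p$. For $(\mathrm{II})$, functional Lipschitz continuity gives the pointwise bound by $K(T)\,\|X^{\pi_n+} - X\|_s$, and $\|X^{\pi_n+} - X\|_{S^p}\to 0$ since $X$ is c\`{a}dl\`{a}g and $\pi_n$ tends to the identity. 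Standard $\mathcal{H}^p$-stability of stochastic integration against $Z$ then transfers these $S^p$-convergences of the integrands to $\|J^n\|_{S^p}\to 0$.

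The main obstacle is the clean packaging of $(\mathrm{I})$ as a bona fide functional Lipschitz driver: although $F$ is functional Lipschitz, the composition $\eta \mapsto F(\eta^{\pi_n+})^{\pi_n}$ mixes $F$ with the partition-dependent time operations, and one must verify that this composite is functional Lipschitz with a constant that does \emph{not} deteriorate in $n$, so that Lemma~\ref{Fre Lem1} yields a bound independent of $n$. Once this bookkeeping is in place, the remainder of the argument parallels the classical proof of Theorem V.16 in \cite{protter2005stochastic}.
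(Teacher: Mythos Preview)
Your proposal is correct and takes essentially the same approach as the paper. The paper gives only a brief ``Idea of proof'': it observes that for a.e.\ $\omega$ the path $X(\cdot,\omega)$ is uniformly continuous, so $\|X^{\pi_n+}-X\|_t\to 0$ pathwise, whence functional Lipschitz continuity gives $\|F(X^{\pi_n+})-F(X)\|_t\to 0$, and then declares ``the remaining part is same as in Theorem~V.16 in \cite{protter2005stochastic}.'' Your three-term decomposition and application of Lemma~\ref{Fre Lem1} is exactly how one fleshes out that remaining part; your term $(\mathrm{II})$ is precisely the piece the paper singles out, and the obstacle you flag about the composite driver in $(\mathrm{I})$ being functional Lipschitz with constant independent of $n$ resolves immediately from the contractivity $\|\eta^{\pi_n+}\|_s\le\|\eta\|_s$ you already noted.
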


\begin{proof} [Idea of proof]
  Note that $X(\cdot, \omega) : [0, T] \rightarrow \mathbb{R}$ is uniformly continuous for $a.e.$ $\omega \in \Omega$. 
  For any small $\epsilon > 0$, there exists large $n$ such that $\lvert X^{\pi_n+}_{u}(\omega) - X_{u}(\omega) \rvert < \epsilon$ for all $u \in [0, t]$.
  We can conclude $\lVert F(X^{\pi+}(\omega)) - F(X(\omega)) \rVert _{t} < C\epsilon$ for some constant $C$ from Lipschitz continuity of $F$. 
  The remaining part is same as in Theorem~V.16 in \cite{protter2005stochastic}.
\end{proof}

 
Now, we prove Proposition~\ref{main Fre prop1}. The construction of the proof follows that of Theorem~V.39 of \cite{protter2005stochastic} with no explosion time.

\begin{proof} [Proof of Proposition~\ref{main Fre prop1}]
  Step 1.
  As in Step 1 of Theorem~V.39 of \cite{protter2005stochastic}, we can assume that $DF$ is globally Lipschitz continuous. This also means that we may assume that the constant $K(\eta)$ in \eqref{main Fre prop1 ass} is independent of the choice $\eta$. Note that from Lipschitz continuity of $DF$, we have the linear growth condition for $DF$, i.e., for all $\eta, \gamma \in D([0, T], \mathbb{R})$, there exist constant $K > 0$ such that 
  \begin{equation}  
  \begin{aligned}\label{Pf lin cond}
    \left\lVert DF(\eta)(\gamma)  \right\rVert  & \le K (1 + \left\lVert \eta \right\rVert ) \left\lVert \gamma \right\rVert.               \\
  \end{aligned}
\end{equation}

  Step 2. We show that $DX^{x}$ is continuous with respect to $x.$
  Let $V(t, \omega) \equiv DX^{x}(t,\omega) - DX^{y}(t, \omega)$ for $t \in [0, T], \omega \in \Omega$. Then, we have
  \begin{equation}
  \begin{aligned}
    V(t) & = \int_{0}^{t}DF_{s-}(X^{x})(V)\,dZ_{s} + \int_{0}^{t}J_{s-}\,dZ_{s} 
  \end{aligned}
  \end{equation}
  where $J_{s} = [DF_{s}(X^{x}) - DF_{s}(X^{y})](DX^{y})$. From the fact that the $H^{p}$ norm is stronger than the $S^{p}$ norm and Emery's inequality, we obtain $    \lVert \int_{0}^{T}J_{s-}\,dZ_{s} \rVert _{S^{p}} \le \lVert J \rVert _{S^{p}} \lVert Z \rVert _{H^{\infty}}.$
  We can estimate $\lVert J \rVert _{S^{p}}$ as  
  \begin{equation} \label{Fre J esti in Sp}
  \begin{aligned}
    \left\lVert J \right\rVert _{S^{p}} & \le K\mathbb{E}\Big[ \sup_{u \in [0, T]} \left\lVert X^{x} - X^{y} \right\rVert^{p}_{u} \left\lVert DX^{y} \right\rVert_{u}^{p}  \Big]^{\frac{1}{p}} \le K\mathbb{E}\left[ \left\lVert X^{x} - X^{y} \right\rVert^{p} \left\lVert DX^{y} \right\rVert^{p}  \right]^{\frac{1}{p}} \\
  & \le CK\mathbb{E} [  \left\lVert X^{x} - X^{y} \right\rVert^{2p}  ]^{\frac{1}{2p}} \left\lVert DX^{y} \right\rVert_{S^{2p}} = CK\left\lVert X^{x} - X^{y} \right\rVert_{S^{2p}} \left\lVert DX^{y} \right\rVert_{S^{2p}}.
  \end{aligned}
\end{equation}
  We use the Lipschitz continuity of $DF$ in the 2nd inequality and H\"{o}lder's inequality in the 4th inequality. The constant may be different for each inequality. Applying Lemma~\ref{Fre Lem1} with $DF(\cdot)(0) = 0$, we obtain
  \begin{equation}\label{Fre DX Lip}
  \begin{aligned}
    \left\lVert V \right\rVert_{S^{p}} & \le C \left\lVert \int_{0}^{T}J_{s-}\,dZ_{s} \right\rVert_{S^{p}}  \le C \left\lVert X^{x} - X^{y} \right\rVert_{S^{2p}} \left\lVert DX^{y} \right\rVert_{S^{2p}}\left\lVert Z \right\rVert_{H^{\infty}}.
  \end{aligned}
\end{equation}

    The Lemma~\ref{Fre Lem1} indicates that $\lVert DX^{y} \rVert_{S^{2p}}$ is bounded in $S^{2p}$. 
    By the proof of Theorem~V.37 in \cite{protter2005stochastic}, we have that $\lVert X^{x} - X^{y} \rVert_{S^{p}} \le C \left\lvert x - y \right\rvert^{p}$, which means that $X^{x}$ is Lipschitz continuous in $S^{p}$ with respect to $x$.
    Now, from \eqref{Fre DX Lip} and Lemma~\ref{Fre Lem1}, we can assert that $DX^{x}$ is continuous with respect to $x$ using Kolmogorov's continuity theorem.

  Step 3. We verify $\frac{\partial X^{x, (n)}}{\partial x} = DX^{x, (n)}$ where $(X^{x, (n)}, DX^{x, (n)})$ is a proper approximation of $(X, DX)$.
  Let $n \in \mathbb{N}$ and $\pi_{n} = \{0 = T^{n}_{0} \le T^{n}_{1} \le \cdots T^{n}_{k_{n}} \le T\}$ be the sequence of random partitions tending to the identity. Define $(X^{(n)}, DX^{(n)})$ is a solution to SDEs for $t \in [0, T]$
  \begin{equation}\label{Fre def tilde}
  \begin{aligned} 
    X^{x, (n)}_{t}  & = x + \int_{0}^{t} F_s((X^{(n)})^{\pi_{n}+})^{\pi_{n}} \,dZ_s,             \\
    DX^{x, (n)}_{t} & = 1 + \int_{0}^{t}DF_{s-}((X^{(n)})^{\pi_{n}+})((DX^{(n)})^{\pi_{n}+})^{\pi_{n}}\,dZ_{s}.
  \end{aligned}
  \end{equation}

 If there is no confusion, we denote $(\tilde{X}^{x}, D\tilde{X}^{x}, T_{k}, \pi )$ or $(\tilde{X}, D\tilde{X}, T_{k}, \pi )$ by $(X^{x, (n)}, DX^{x, (n)}, T^{n}_{k}, \pi_{n})$ for simplicity. First, we prove that $\tilde{X}$ and $D\tilde{X}$ converge to $X$ and $DX$, respectively, in $S^{p}$ space. From Lemma~\ref{Fre Lem2}, we know $\tilde{X} \rightarrow X$ in $S^{p}$. To apply Lemma~\ref{Fre Lem2} to $D\tilde{X}$ in \eqref{Fre def tilde}, an additional discussion is required. For fixed $n$, define operators $G^{n}$, $G$ on $\mathbb{D}$ by 
\begin{equation}\label{Fre SDE G in proof}
\begin{aligned} 
  G^{n}(H) &= DF_{s-}(\tilde{X}^{\pi+})(H^{\pi+})^{\pi}, \qquad G(H) = DF_{s-}(X)(H).
 \end{aligned}
\end{equation} 
for a process $H$.

From Lipscitz continuity \eqref{main Fre prop1 ass}, linear growth \eqref{Pf lin cond} and convergence of $\tilde{X}^x$, we have that $G^{(n)}(DX)$ converges to $G(DX)$ in both supremum norm and $S^p$ norm. 
Then, we obtain $\int_{0}^{t} G^{n}_{s-}(DX)\, dZ_s  \rightarrow \int_{0}^{t} G_{s-}(DX)\, dZ_s$ in $S^{p}$
using the stochastic dominating convergence theorem. as in the proof of Theorem~V.16 in \cite{protter2005stochastic}, we have $D\tilde{X}$ converges to $DX$ in $S^p$ space.

  Now, we prove $\frac{\partial \tilde{X}^{x}}{\partial x}(t, \omega) = D\tilde{X}^{x}(t, \omega)$ for $(t, \omega) \in [0, T] \times \Omega$. We fix $\omega \in \Omega$. 
  From definitions \eqref{Fre def lc part}, \eqref{Fre def tilde}, we have the following equalities for $ t \in [ T_{i}, T_{i+1})$, $ i = 0, \cdots, k_n$, and $s \in [0, t]$,
  \begin{equation}
  \begin{aligned}
    \int_{0}^{t} F_s(\tilde{X}^{\pi+})^{\pi} \,dZ_{s} &= \sum_{j=0}^{i-1}F_{T_{j}}(\tilde{X}^{\pi+}) \cdot  \left( Z_{T_{j+1}} - Z_{T_{j}} \right) + F_{T_{i}}(\tilde{X}^{\pi+}) \cdot  \left( Z_{t} - Z_{T_{i}}\right).
  \end{aligned}
\end{equation}
  Hence, $\tilde{X}$, $D\tilde{X}$ can be described as 
  \begin{equation}\label{Fre dev tilde}
  \begin{aligned} 
    \tilde{X}_{t} & = \tilde{X}_{T_{i}} + F_{T_{i}}(\tilde{X}^{\pi+}) \cdot\left( Z_{t} - Z_{T_{i}}\right)  \\
   D\tilde{X}_{t} &= D\tilde{X}_{T_{i}} +  DF_{T_{i}}(\tilde{X}^{\pi+}) ( (D\tilde{X})^{\pi+}  ) \cdot \left( Z_{t} - Z_{T_{i}}\right)
  \end{aligned}
  \end{equation}
  for $t \in [T_{i}, T_{i+1}]$.

  We use induction to show $\frac{\partial \tilde{X}}{\partial x}(t, \omega) = D\tilde{X}(t, \omega)$.
  If $ t \in [T_{0}, T_{1}] = [0, T_{1}]$, 
  it is easily seen to show $\frac{\partial\tilde{X}_{t}}{\partial x} = D\tilde{X}_{t}$ from Proposition~\ref{Fre prop1}.
  
  Now, we prove the induction step. Suppose that $\frac{\partial \tilde{X}_{t}}{\partial x} = D\tilde{X}_{t}$ for $t \in [0, T_{j}]$. Let $t \in (T_{j}, T_{j+1}]$. From \eqref{Fre dev tilde}, we obtain that $\frac{\partial \tilde{X}^{x}_{t}}{\partial x} = D\tilde{X}^{x}_{T_{j}} + \frac{\partial F_{T_{j}}(\tilde{X}^{x, \pi+})}{\partial x}(Z_{t} - Z_{T_{j}})$.
  We need to show $\frac{\partial F_{T_{j}}(\tilde{X}^{x, \pi+})}{\partial x} =  DF_{T_{j}}( \tilde{X}^{x, \pi+} ) ( D\tilde{X}^{\pi+} )$. For small $\delta > 0$,
  \begin{equation}
    \bigg\lvert \frac{F_{T_{j}}( \tilde{X}^{x+\delta, \pi+} ) - F_{T_{j}}( \tilde{X}^{x, \pi+})}{\delta} - DF_{T_{j}}( \tilde{X}^{x, \pi+} ) ( D\tilde{X}^{\pi+} ) \bigg\rvert  \le AB + C
  \end{equation}
  where $A, B, C$ are defined as 
  \begin{align}
A &= \frac{\lvert F_{T_{j}}( \tilde{X}^{x+\delta, \pi+} ) \! - \! F_{T_{j}}( \tilde{X}^{x, \pi+})\! -\! DF_{T_{j}}( \tilde{X}^{x, \pi+} ) (  \tilde{X}^{x+\delta, \pi+} \!- \!\tilde{X}^{x, \pi+}  )\rvert }{\lVert \tilde{X}^{x+\delta, \pi+} \!- \!\tilde{X}^{x, \pi+} \rVert _{T_{j}}} \\
B &= \frac{\lVert \tilde{X}^{x+\delta, \pi+} \! - \! \tilde{X}^{x, \pi+} \rVert _{T_{j}}}{\delta} \\
C &= \Big\lvert \frac{  DF_{T_{j}}( \tilde{X}^{x, \pi+} ) (  \tilde{X}^{x+\delta, \pi+} - \tilde{X}^{x, \pi+})  }{\delta} - DF_{T_{j}}( \tilde{X}^{x, \pi+} ) ( D\tilde{X}^{\pi+} ) \Big\rvert.
 \end{align}
  Since $\frac{\partial \tilde{X}^{x}_{T_{j}}}{\partial x} = D\tilde{X}_{T_{j}}$, $A$ converges to $0$ as $\delta \rightarrow 0$ from the definition of the Fr\'{e}chet derivative. 
  In addition, since $DF(\eta)(\cdot)$ is a continuous linear operator for each $\eta \in D([0, T, \mathbb{R}])$, 
  we obtain $C \rightarrow 0$
  as $\delta \rightarrow 0$. Now, we derive that  $B$ is bounded uniformly in $\delta$. For $u \in [T_{i}, T_{i+1}]$ with $i = 0, \cdots , j-1$, from \eqref{Fre dev tilde}, we obtain
  $|\tilde{X}^{x + \delta, \pi+}_{u} - \tilde{X}^{x, \pi+}_{u}|  = \lvert F_{T_{i}}(\tilde{X}^{x+\delta, \pi+}) - F_{T_{i}}(\tilde{X}^{x, \pi+}) \rvert \left\lvert Z_{u} - Z_{T_{i}} \right\rvert $.
  For any $\epsilon > 0$,  we can choose $\xi > 0$ such that if $0 < \delta < \xi$ then
  \begin{align}
    \Big\lvert \frac{F_{T_{i}}(\tilde{X}^{x+\delta, \pi+}) - F_{T_{i}}(\tilde{X}^{x, \pi+})}{\delta} - DF_{T_{i}}(\tilde{X}^{x, \pi+})(D\tilde{X}^{\pi+}) \Big\rvert < \epsilon,
  \end{align}
  for all $i = 0, \cdots , j-1$. Thus, we obtain
  \begin{equation}
  \begin{aligned}
    B & = \sup_{u \in [0, T_j]} \frac{\big\lvert \tilde{X}^{x + \delta, \pi+}_{u} - \tilde{X}^{x, \pi+}_{u} \big\rvert }{\delta}  \\
      & \le \left( \big\lVert DF_{T_{i}}(\tilde{X}^{x, \pi+})(D\tilde{X}^{\pi+})\big\rVert_{T_j} +\epsilon \right) \times \sup_{i = 0, \cdots, j-1}\sup_{u \in [T_{i}, T_{i+1}]} \left\lvert Z_{u} - Z_{T_{i}}\right\rvert.
  \end{aligned}
\end{equation}
  We have already fixed $\omega \in \Omega$, and the $\lvert Z_{u} - Z_{T_{i}} \rvert$ part is bounded by $2 \sup_{u\in [0, T_{j}]} \lvert Z_{u} \rvert$. Thus, we have $B$ is uniformly bounded in $\delta$. Consequently, we obtain $\frac{\partial \tilde{X}^{x}_{t}}{\partial x}= D\tilde{X}_{t}$ for all $t \in [0, T]$ from the induction argument.

  Step 4.
  The remaining part is same as Theorem~V.39 of \cite{protter2005stochastic}. From Step 1, we can consider $(X, DX)$ as a distribution called a generalized function. In distribution theory, we use the theorem that if $g$ is a continuous distribution and $Dg$ is a continuous derivative in the distribution sense, then $Dg$ is the derivative of $g$ in classical sense. Therefore, we can conclude $\frac{\partial X}{\partial x}(t, \omega) = DX(t, \omega)$ for $\omega$ not in exceptional set.
\end{proof}

\begin{remark}\label{Fre rmk3}
  The Fr\'{e}chet derivative includes the notion of the usual derivative. If a function $f: \mathbb{R} \rightarrow \mathbb{R}$ is differentiable at $x$, then the Fr\'{e}chet derivative of $f$ at $x \in \mathbb{R}$ is represented by $Df(x) : \mathbb{R} \rightarrow \mathbb{R}$ as $DF(x)(h) = f'(x) \cdot h$. 
  
  The chain rule also holds for the Fr\'{e}chet derivative and we use it for the normed space $D([0, T], \mathbb{R})$ or $\mathbb{R}$. Note that there exists the Fr\'{e}chet derivative of $b(X^{x})$ for an operator $b:D([0, T], \mathbb{R}) \rightarrow D([0, T], \mathbb{R})$ which is represented by $Db(X^{x})(DX^{x})$. 
\end{remark}



We omit the proof of Proposition~\ref{main Fre 2 prop2} since it is the generalize of Proposition~\ref{main Fre prop1} to the second-order Fr\'{e}chet derivative.

\begin{proof}[Proofs of Theorems~\ref{main SDE diff} and \ref{main SDE 2 diff}]
Basically, we follow the proofs of Propositions~\ref{main Fre prop1} and \ref{main Fre 2 prop2}. Here, we only describe differences between the proofs. By switching the initial data from a point to a path, we extend Lemma~\ref{Fre Lem2} to the case of an SDE \eqref{main model sdeX}.
For a fixed initial path $\gamma_{t}$ and derivative direction $\delta_t$, we take a random partition $\pi = \{0 = T_{0} \le T_{1} \le \cdots \le T_{n} = T \} $ including the discontinuity points of $\gamma_{t}$ and $\delta_t$. In the interval of the partition, there is no difference from Lemma~\ref{Fre Lem2}, as the paths $\gamma_{t}$ and $\delta_t$ are continuous in the interval. The discontinuity points at partition $T_0, \cdots, T_n$ can be covered well from the right continuous version $X^{\pi+}$ and $DX^{\pi+}$ because the paths $\gamma_{t}$ and $\delta_t$ are c\`{a}dl\`{a}g. Thus, we have $X^{(\pi_{n})}, DX^{(\pi_n)}$ converges to $X, DX$ in $S^{p}$.
\end{proof}

\section{Non-anticipative PDE}\label{Appendix:PDE}
In this section, we prove Theorem~\ref{main theorem}. The entire proof follows that of \cite{PengWang2016bsde}. 
Applying the traditional BSDE theory to the case of a path-dependent BSDE, we obtain the vertical differentiability of the solution to the BSDE.
For the time derivative, we approximate the given path-dependent BSDE to the case of a classic function coefficient BSDE and use the result \cite{PengWang2016bsde}. In this process, we determine that the non-anticipative functional $u$ in \eqref{main def of u} is the solution to the path-dependent PDE \eqref{main PPDE}.

\subsection{Vertical derivative}\label{Appendix:Ver}
In this part, we show that the operator $u_t(\gamma_{t}) = Y^{\gamma_{t}}(t)$ has a second-order vertical derivative. For this, we show that the $\mathcal{S}^{p}$ norm and $\mathcal{H}^{p}$ norm of solutions to an SDE or BSDE depend only on the initial path, applying the arguments in \cite{Zhang2017BSDE} and \cite{PengWang2016bsde}. After that, we estimate the $S^{p}$ norm of a difference of $Y^{\gamma_{t}}$ when the initial path changes vertically. This argument is deeply involved in \cite{PengWang2016bsde}. Then, we have the differentiability of $u$ using Kolmogorov's continuity theorem.

 Before stating the theorems and lemmas to be proved, we mention an important thing in proofs. As in the proof of Proposition~\ref{main Fre prop1}, we can assume that $b$, $\sigma$ and $g$ are globally Lipschitz continuous. It means that the inequality \eqref{Pf lin cond} holds for $DF = Db, D\sigma$. Moreover, the linear growth for $Dg$ holds, i.e., for all $\eta, \gamma \in D([0, T], \mathbb{R})$, there exists $K > 0 $ such that 
 $\lvert Dg(\eta)(\gamma) \rvert \le K (1 + \left\lVert \eta \right\rVert  ) \left\lVert \gamma \right\rVert$.
 Finally, the constant $C$ in proofs may be different in each line.
 
\begin{thm} \label{Ver thm1}
  For any $p \ge 2$, the following inequality holds,
  \begin{equation}\label{Ver eq in thm1}
    \mathbb{E} \Big[\sup_{s \in [t, T]} |Y^{\gamma_{t}}(s)|^p + \Big| \int_{t}^{T} |Z^{\gamma_{t}}(s)|^2 \,ds\Big|^{\frac{p}{2}} \Big] \le C\mathbb{E}[|g(X^{\gamma_t}) |^p ].
  \end{equation}
\end{thm}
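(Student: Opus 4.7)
The plan is to exploit that the BSDE \eqref{model bsdeY} has zero driver, so that $Y^{\gamma_t}$ is a martingale on $[t,T]$ with $Y^{\gamma_t}(s) = \mathbb{E}[g(X^{\gamma_t}) \mid \mathcal{F}_s]$ and $Z^{\gamma_t}$ is its integrand in the martingale representation. The estimate on both pieces will then follow from Doob's maximal inequality and the Burkholder--Davis--Gundy (BDG) inequality, without any need for Gronwall or Itô's formula on $|Y|^p$.

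First I would handle the $Y$-term. Since the generator is zero, $Y^{\gamma_t}$ is a c\`adl\`ag $L^p$-martingale on $[t,T]$ with terminal value $g(X^{\gamma_t})$. Doob's $L^p$ maximal inequality immediately gives
\begin{equation}
\mathbb{E}\Big[\sup_{s \in [t,T]} |Y^{\gamma_t}(s)|^p\Big] \le \Big(\tfrac{p}{p-1}\Big)^p \mathbb{E}\big[|g(X^{\gamma_t})|^p\big],
\end{equation}
which is the first half of \eqref{Ver eq in thm1}. The implicit hypothesis is that $\mathbb{E}[|g(X^{\gamma_t})|^p] < \infty$; this follows from Assumption~\ref{model payoff ass2}(i), which gives polynomial growth of $g$, combined with standard $L^p$-moment bounds on the SDE solution $X^{\gamma_t}$ obtained from the linear growth of $b,\sigma$ (granted after the localization step used in the proof of Proposition~\ref{main Fre prop1}).

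Next I would handle the $Z$-term. Writing the BSDE as
\begin{equation}
\int_t^s Z^{\gamma_t}(r)\,dW_r = Y^{\gamma_t}(s) - Y^{\gamma_t}(t),
\end{equation}
the left-hand side is a continuous martingale with quadratic variation $\int_t^s |Z^{\gamma_t}(r)|^2\,dr$. The BDG inequality yields
\begin{equation}
\mathbb{E}\Big[\Big(\int_t^T |Z^{\gamma_t}(r)|^2\,dr\Big)^{p/2}\Big] \le C_p\, \mathbb{E}\Big[\sup_{s \in [t,T]} \Big|\int_t^s Z^{\gamma_t}(r)\,dW_r\Big|^p\Big] \le 2^p C_p\, \mathbb{E}\Big[\sup_{s \in [t,T]} |Y^{\gamma_t}(s)|^p\Big].
\end{equation}
Combining this with the Doob estimate above and absorbing constants produces the full inequality \eqref{Ver eq in thm1}.

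There is no real obstacle: the only subtle point is justifying the integrability of $g(X^{\gamma_t})$ in $L^p$, which is where the polynomial growth from Assumption~\ref{model payoff ass2}(i) is used together with $\mathbb{E}[\lVert X^{\gamma_t}\rVert^{pk}] < \infty$ for the relevant exponent $k$. Because the BSDE has no driver $f$, the estimate here is significantly simpler than the classical BSDE a priori estimate, and the structure of the proof will serve as the template for analogous bounds on the difference $Y^{\gamma_t + h\tilde e_i} - Y^{\gamma_t}$ in the subsequent vertical-differentiability argument.
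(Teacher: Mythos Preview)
Your proposal is correct and is exactly the standard a priori estimate for a zero-driver BSDE; the paper itself does not give a proof but simply cites Lemma~3.4 of \cite{PengWang2016bsde}, whose argument in the general (nonzero driver) case reduces, when $f\equiv 0$, to precisely the Doob-plus-BDG computation you wrote. Your remark that the absence of a driver makes Gronwall and It\^o-on-$|Y|^p$ unnecessary is on point and matches the simplification implicit in the paper's setting.
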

The proof of this theorem is same as Lemma~3.4 in \cite{PengWang2016bsde}. We now study the continuity and differentiability of $(Y^{\gamma_{t}}, Z^{\gamma_{t}})$ with respect to the initial path $\gamma_{t}$. 
 For $\gamma_{t} \in D([0, t], \mathbb{R}^d), \bar{\gamma}_{\bar{t}} \in D([0, \bar{t}], \mathbb{R})$,  we define $\Delta_{h}Y^{\gamma_{t}} = \frac{1}{h}(Y^{\gamma_{t}^{h}} - Y^{\gamma_{t}})$, $\Delta_{h}Z^{\gamma_{t}} = \frac{1}{h}(Z^{\gamma_{t}^{h}} - Z^{\gamma_{t} })$ where $\gamma_{t}^{h} \in D([0, t], \mathbb{R}^d)$ is defined as $\gamma_{t}^{h}(s) = \gamma_{t}(s)\mathds{1}_{[0,t)}(s)  + (\gamma_{t}(t) + h)\mathds{1}_{\{t\}}(s)$ for $s\in [0, t]$.

\begin{lemma} \label{Ver lem1}
  For any $p \ge 2$, there exist some constants $C_p$ and $q$ dependent only on $C, T, k, p$ such that for any $t, \bar{t} \in [0, T]$, $h,\bar{h} \in \mathbb{R} - \{0\}, \gamma_{t} \in D([0, t], \mathbb{R}^d), \bar{\gamma}_{\bar{t}} \in D([0, \bar{t}], \mathbb{R}),$
  \begin{enumerate}
    \item [(i)] $$\mathbb{E} \Big[ \sup_{u\in[t \wedge \bar{t}, T]} |Y^{\gamma_{t}}(u) - Y^{\bar{\gamma}_{\bar{t}}}(u)|^p  \Big]\le C_p  ( 1 + \Vert \gamma_{t} \Vert ^q + \Vert \bar{\gamma}_{\bar{t}} \Vert ^q  ) ( \lVert \gamma_{t} - \bar{\gamma}_{\bar{t}}  \rVert^{p}_{t \wedge \bar{t}} +  \lVert t - \bar{t}  \rVert^{\frac{p}{2}} )  .$$
    \item [(ii)]$$\mathbb{E} \Big[  \Big| \int_{t \wedge \bar{t}}^{T}  ( Z^{\gamma_{t}}(u) - Z^{\bar{\gamma}_{\bar{t}}}(u)  )^2 \,du  \Big|^{\frac{p}{2}}   \Big] \le C_p  ( 1 + \Vert \gamma_{t} \Vert ^q + \Vert \bar{\gamma}_{\bar{t}} \Vert ^q  )  ( \lVert \gamma_{t} - \bar{\gamma}_{\bar{t}}  \rVert^{p}_{t \wedge \bar{t}} +  \lVert t - \bar{t}  \rVert^{\frac{p}{2}} ) . $$
    \item [(iii)]
          \begin{align}
             & \mathbb{E} \Big[ \sup_{u \in [t \wedge \bar{t}, T]} \vert \Delta_{h} Y^{\gamma_{t}}(u) - \Delta_{\bar{h}}Y^{\bar{\gamma}_{\bar{t}}}(u) \vert^p   \Big] \le C_p  ( 1 + \Vert \gamma_{t}^{h} \Vert ^q + \Vert \bar{\gamma}_{\bar{t}}^{\bar{h}} \Vert ^q +   )( \lVert \gamma_{t}^{h} - \bar{\gamma}_{\bar{t}}^{\bar{h}}  \rVert^{p}_{t \wedge \bar{t}} +  \lVert t - \bar{t}  \rVert^{\frac{p}{2}} ).
          \end{align}
    \item [(iv)]\label{iv}
          \begin{align}
             & \mathbb{E} \Big[  \Big| \int_{t \wedge \bar{t}}^{T}  ( \Delta_{h}Z^{\gamma_{t}}(u) - \Delta_{\bar{h}}Z^{\bar{\gamma}_{\bar{t}}}(u)  )^2 \,du  \Big| ^{\frac{p}{2}}   \Big] \le  C_p  ( 1 + \Vert \gamma_{t}^{h} \Vert ^q + \Vert \bar{\gamma}_{\bar{t}}^{\bar{h}} \Vert ^q  )( \lVert \gamma_{t}^{h} - \bar{\gamma}_{\bar{t}}^{\bar{h}}  \rVert^{p}_{t \wedge \bar{t}} +  \lVert t - \bar{t}  \rVert^{\frac{p}{2}} ).
          \end{align}
  \end{enumerate}
\end{lemma}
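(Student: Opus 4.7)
The plan is to combine the standard BSDE $\mathcal{S}^p$--$\mathcal{H}^p$ stability theory with the moment bounds and path-stability of the SDE solutions from Section~\ref{sec:d}. Following the reduction in the proof of Proposition~\ref{main Fre prop1}, I would first replace $b,\sigma,g$ by globally Lipschitz variants so that the constants below do not depend on the path being considered.

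For parts (i) and (ii), subtract the two BSDEs \eqref{model bsdeY}: the differences $\Delta Y := Y^{\gamma_t}-Y^{\bar\gamma_{\bar t}}$ and $\Delta Z := Z^{\gamma_t}-Z^{\bar\gamma_{\bar t}}$ solve a martingale BSDE on $[t\wedge\bar t,T]$ with terminal value $g(X^{\gamma_t}) - g(X^{\bar\gamma_{\bar t}})$. The $\mathcal{S}^p$--$\mathcal{H}^p$ estimate of Theorem~\ref{Ver thm1} applied to this BSDE reduces the claim to bounding $\mathbb{E}[|g(X^{\gamma_t})-g(X^{\bar\gamma_{\bar t}})|^p]$. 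By the local Lipschitz hypothesis on $g$ in Assumption~\ref{model payoff ass2}(i) and H\"older's inequality, this is bounded by
\begin{equation}
C\,\mathbb{E}\bigl[(1+\|X^{\gamma_t}\|+\|X^{\bar\gamma_{\bar t}}\|)^{2pk}\bigr]^{1/2}\,\mathbb{E}\bigl[\|X^{\gamma_t}-X^{\bar\gamma_{\bar t}}\|^{2p}\bigr]^{1/2}.
\end{equation}
The first factor is controlled by $C(1+\|\gamma_t\|^q+\|\bar\gamma_{\bar t}\|^q)$ via Gr\"onwall and BDG applied to \eqref{main model sdeX}. For the second factor, I would compare $X^{\gamma_t}$ and $X^{\bar\gamma_{\bar t}}$ on the common interval $[t\vee\bar t,T]$; this introduces the difference at time $t\vee\bar t$, which decomposes into a $\|\gamma_t-\bar\gamma_{\bar t}\|^{2p}_{t\wedge\bar t}$ contribution plus a gap term $|\int_{t\wedge\bar t}^{t\vee\bar t}\sigma_r(X)\,dW_r+\int b_r\,dr|^{2p}$ that BDG bounds by $C|t-\bar t|^{p}$ times a moment of $\|X\|$. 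Taking square roots produces the $|t-\bar t|^{p/2}$ term in (i) and (ii).

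For parts (iii) and (iv), I apply the same scheme to the finite differences. The pair $(\Delta_h Y^{\gamma_t}-\Delta_{\bar h}Y^{\bar\gamma_{\bar t}},\Delta_h Z^{\gamma_t}-\Delta_{\bar h}Z^{\bar\gamma_{\bar t}})$ solves a martingale BSDE with terminal value $\Delta_h g(X^{\gamma_t})-\Delta_{\bar h}g(X^{\bar\gamma_{\bar t}})$. Writing each finite difference by the mean-value formula $\Delta_h g(X^{\gamma_t})=\int_0^1 Dg\bigl(X^{\gamma_t}+\theta h\,\Delta_h X^{\gamma_t}\bigr)(\Delta_h X^{\gamma_t})\,d\theta$, the difference of the two terms splits as the sum of a piece governed by the local Lipschitz estimate \eqref{main ass g ineq} on $Dg$, and a piece governed by $\Delta_h X^{\gamma_t}-\Delta_{\bar h}X^{\bar\gamma_{\bar t}}$. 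The latter in turn satisfies an SDE of the same shape as \eqref{model sdeDX}, so the bounds (i)--(ii) combined with the SDE-stability of $D_{\delta_t} X^{\gamma_t}$ already established in Theorem~\ref{main SDE diff} yield the required estimate, producing the factor $1+\|\gamma_t^h\|^q+\|\bar\gamma_{\bar t}^{\bar h}\|^q$.

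The main technical difficulty I anticipate is obtaining a bound for the finite-difference terminal value $\Delta_h g(X^{\gamma_t})-\Delta_{\bar h}g(X^{\bar\gamma_{\bar t}})$ that is \emph{uniform in $h,\bar h$}: a crude Lipschitz bound on $g$ would only give $O(1)$ in $h$, and only by using the Fr\'echet differentiability of $g$ together with the local Lipschitz estimate on $Dg$ does one obtain the required behavior in $\|\gamma_t^h-\bar\gamma_{\bar t}^{\bar h}\|+|t-\bar t|^{1/2}$. This uniformity is exactly what will allow Kolmogorov's continuity criterion in the next step to upgrade the mean-square bounds of (iii)--(iv) to the pointwise vertical differentiability of $u$ used in Theorem~\ref{main theorem}.
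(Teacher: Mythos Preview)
Your proposal is correct and matches the paper's proof: reduce via the BSDE estimate of Theorem~\ref{Ver thm1} to the terminal value, apply Assumption~\ref{model payoff ass2}(i) with H\"older and the SDE stability of Lemma~\ref{Ver lem2} for (i)--(ii), and for (iii)--(iv) express the finite difference of $g$ through $Dg$ via the fundamental theorem of calculus and then use the Lipschitz bound \eqref{main ass g ineq}. The only difference is cosmetic: the paper interpolates in the perturbation parameter rather than linearly in path space, writing $\Delta_h g(X^{\gamma_t}) = \int_0^1 Dg(X^{\gamma_t^{\theta h}})(DX^{\gamma_t^{\theta h}})\,d\theta$ via the chain rule \eqref{Ver diff g}, so that the integrand directly involves the flow derivative $DX$ from Section~\ref{sec:d} (for which the needed estimates are already packaged in \eqref{model sdeDX} and Proposition~\ref{main Fre prop1}) rather than the finite difference $\Delta_h X$.
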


Before we prove the above lemma, we need to estimate the case of $X^{\gamma_{t}}$. The proof of this lemma is from standard argument in SDE theory.
\begin{lemma} \label{Ver lem2}
  For any $p \ge 2$, there exists some constant $C_p$ dependent only on $C, T, k, p$ such that for any $t, \bar{t} \in [0, T], \gamma_{t} \in D([0, t], \mathbb{R}), \bar{\gamma}_{\bar{t}} \in D([0, \bar{t}], \mathbb{R}),$
  \begin{equation}
    \mathbb{E}\Big[ \sup_{s \in [0, T]}\left\lvert X^{\gamma_{t}}(s) - X^{\bar{\gamma}_{\bar{t}}}(s) \right\rvert^p  \Big] \le C_p \left( 1 + \left\lVert \gamma_{t} \right\rVert^{p}  + \left\lVert \bar{\gamma}_{\bar{t}} \right\rVert ^p \right) (\left\lVert \gamma_{t} - \bar{\gamma}_{\bar{t}} \right\rVert^{p}_{t \wedge \bar{t}} + \left\lvert t - \bar{t} \right\rvert^{\frac{p}{2}} ) .
  \end{equation}
\end{lemma}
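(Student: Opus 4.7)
This is the classical Burkholder--Davis--Gundy and Gronwall $S^p$-stability estimate for SDEs, carried over to the path-dependent setting in which the initial data is a path over $[0,t]$ rather than a single point. Without loss of generality I assume $t\le\bar t$, and split $\sup_{s\in[0,T]}$ into the three subintervals $[0,t]$, $[t,\bar t]$ and $[\bar t,T]$, estimating each piece separately.

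The preparatory step is an a priori moment bound $\mathbb{E}[\|X^{\gamma_t}\|^p]\le C_p(1+\|\gamma_t\|^p)$ (and the analogue for $X^{\bar\gamma_{\bar t}}$). I would obtain it by applying BDG to the stochastic integral in \eqref{main model sdeX}, using the linear growth of $b$ and $\sigma$ (which follows from their globalized functional Lipschitz continuity, as in Step~1 of the proof of Proposition~\ref{main Fre prop1}), and closing with Gronwall's inequality. On $[0,t]$ both processes coincide with their initial paths, so the contribution is exactly $\|\gamma_t-\bar\gamma_{\bar t}\|_{t\wedge\bar t}^p$.

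On the middle interval $[t,\bar t]$, $X^{\gamma_t}(s)$ evolves by the SDE while $X^{\bar\gamma_{\bar t}}(s)=\bar\gamma_{\bar t}(s)$, so the decomposition
\[
X^{\gamma_t}(s)-\bar\gamma_{\bar t}(s) = \bigl(\gamma_t(t)-\bar\gamma_{\bar t}(t)\bigr) - \bigl(\bar\gamma_{\bar t}(s)-\bar\gamma_{\bar t}(t)\bigr) + \int_t^s b_r(X^{\gamma_t})\,dr + \int_t^s \sigma_r(X^{\gamma_t})\,dW_r
\]
is the key identity. BDG combined with linear growth and the a priori bound controls the two integral terms by $C_p(1+\|\gamma_t\|^p)|t-\bar t|^{p/2}$, and the first pointwise term is dominated by $\|\gamma_t-\bar\gamma_{\bar t}\|_{t\wedge\bar t}^p$. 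Finally, on $[\bar t,T]$ both processes solve the same SDE, and writing
\[
X^{\gamma_t}(u)-X^{\bar\gamma_{\bar t}}(u) = \bigl(X^{\gamma_t}(\bar t)-X^{\bar\gamma_{\bar t}}(\bar t)\bigr) + \int_{\bar t}^u[b_r(X^{\gamma_t})-b_r(X^{\bar\gamma_{\bar t}})]\,dr + \int_{\bar t}^u[\sigma_r(X^{\gamma_t})-\sigma_r(X^{\bar\gamma_{\bar t}})]\,dW_r,
\]
applying BDG together with the functional Lipschitz continuity of $b,\sigma$, taking expectations, and invoking Gronwall's inequality propagates the accumulated $S^p$-error from $[0,\bar t]$ to $[\bar t,T]$ with a multiplicative $e^{C_pT}$ factor, yielding the claim.

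The main technical obstacle is the càdlàg variation term $\bar\gamma_{\bar t}(s)-\bar\gamma_{\bar t}(t)$ on the middle interval: it need not be small at rate $|t-\bar t|^{1/2}$ for a general càdlàg initial path, and must be absorbed using the crude bound $|\bar\gamma_{\bar t}(s)-\bar\gamma_{\bar t}(t)|\le 2\|\bar\gamma_{\bar t}\|$ into the polynomial envelope $(1+\|\gamma_t\|^p+\|\bar\gamma_{\bar t}\|^p)$ on the right-hand side. Tracking the resulting $T$-dependent constants carefully is what forces the polynomial factor in $\|\bar\gamma_{\bar t}\|$ to appear; everything else is the standard SDE stability argument.
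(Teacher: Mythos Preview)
Your overall architecture---split $[0,T]$ into $[0,t]$, $[t,\bar t]$, $[\bar t,T]$, use the a~priori moment bound, apply BDG to the stochastic integrals, exploit the functional Lipschitz property, and close with Gronwall---is exactly the standard SDE-stability argument the paper invokes (the paper gives no details beyond ``standard argument in SDE theory''). On $[0,t]$ and on $[\bar t,T]$ your outline is correct and complete.

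The genuine gap is your handling of the oscillation term $\bar\gamma_{\bar t}(s)-\bar\gamma_{\bar t}(t)$ on the middle interval. You correctly recognise that for a general c\`adl\`ag path this term carries no $|t-\bar t|^{1/2}$ smallness, but the proposed fix---bounding it by $2\|\bar\gamma_{\bar t}\|$ and ``absorbing it into the polynomial envelope''---does not work. The right-hand side of the lemma is a \emph{product} $(1+\|\gamma_t\|^p+\|\bar\gamma_{\bar t}\|^p)\cdot(\|\gamma_t-\bar\gamma_{\bar t}\|_{t\wedge\bar t}^p+|t-\bar t|^{p/2})$; a bare contribution of order $\|\bar\gamma_{\bar t}\|^p$ on the left cannot be placed inside that product because the second factor may be arbitrarily small. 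Concretely: take $t<\bar t$, $\gamma_t\equiv 0$, and let $\bar\gamma_{\bar t}$ vanish on $[0,t_0)$ and equal a fixed $M$ on $[t_0,\bar t]$ for some $t_0\in(t,\bar t]$. Then $\|\gamma_t-\bar\gamma_{\bar t}\|_{t\wedge\bar t}=0$, yet $\sup_{s}|X^{\gamma_t}(s)-X^{\bar\gamma_{\bar t}}(s)|\ge |X^{\gamma_t}(t_0)-M|$, which is of order $M$ for $|t-\bar t|$ small; the left side is $\sim M^p$ while the stated right side is $\sim M^p|t-\bar t|^{p/2}$. So the inequality, read literally for arbitrary c\`adl\`ag $\bar\gamma_{\bar t}$, cannot hold, and no amount of constant-tracking will rescue that step. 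The estimate is valid (and your argument goes through verbatim) once one imposes that $\bar\gamma_{\bar t}$ is constant on $[t,\bar t]$---which is precisely the situation in all later applications of the lemma in the paper (vertical bumps $\gamma_t^h$ and horizontal extensions $\gamma_{t,s}$)---or if one adds a term controlling $\sup_{s\in[t,\bar t]}|\bar\gamma_{\bar t}(s)-\bar\gamma_{\bar t}(t)|$ to the right-hand side.
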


Now, we prove Lemma~\ref{Ver lem1}. In this proof, we use the Fr\'{e}chet derivative of $X^{\gamma_{t}}$ introduced in the propositions in Section~\ref{sec:d}. More specifically, let $DX^{\gamma_{t}}$ be the solution to SDE \eqref{main Fre sdeDX}. From Remark~\ref{Fre rmk3}, we obtain the differential of $g(X^{\gamma_{t}^{x}})$, 
\begin{align}\label{Ver diff g}
  \frac{\partial g(X^{\gamma_{t}^{x}}) }{\partial x} = Dg(X^{\gamma_{t}^{x}})(DX^{\gamma_{t}^{x}}).
\end{align}
\begin{proof}[Proof of Lemma~\ref{Ver lem1}]
  The proof follows that of Theorem~3.7 in \cite{PengWang2016bsde}. 
  Without loss of generality, we assume $t \ge \bar{t}$.
  From Theorem~\ref{Ver thm1}, Assumption~\ref{model payoff ass2} and Lemma~\ref{Ver lem2}, we obtain the inequalities 
  \begin{equation}
  \begin{aligned}
    \mathbb{E}\Big[ \sup_{u \in [\bar{t} , T]} \left\lvert Y^{\gamma_{t}}(u) - Y^{\bar{\gamma}_{\bar{t}}}(u) \right\rvert ^{p}  \Big] 
     &\le C\mathbb{E}\Big[ (1 + \left\lVert X^{\gamma_{t}} \right\rVert + \left\lVert X^{\bar{\gamma}_{\bar{t}}} \right\rVert )^{2kp} \Big]^{\frac{1}{2}} \mathbb{E}\Big[\left\lVert  X^{\gamma_{t}} - X^{\bar{\gamma}_{\bar{t}}}  \right\rVert^{2p}  \Big]^{\frac{1}{2}} \\
     &\le C \left( \left\lVert \gamma_{t} - \bar{\gamma}_{\bar{t}} \right\rVert^{p}_{t \wedge \bar{t}} + \left\lvert t - \bar{t} \right\rvert^{\frac{p}{2}} \right)  \left( 1 + \left\lVert \gamma_{t} \right\rVert  + \left\lVert \bar{\gamma}_{\bar{t}} \right\rVert \right)^{kp}.
  \end{aligned}
  \end{equation}
  The proof of $(ii)$ in the lemma is same as in Theorem~\ref{Ver thm1} regarding the control of Z.
  
  To show $(iii)$, let us consider $(\Delta_{h}Y^{\gamma_{t}} - \Delta_{\bar{h}}Y^{\bar{\gamma}_{\bar{t}}}, \Delta_{h}Z^{\gamma_{t}} - \Delta_{\bar{h}}Z^{\bar{\gamma}_{\bar{t}}}) $ as a solution to the BSDE
  \begin{equation}
    Y(r) = \frac{g(X^{\gamma^{h}_{t}}) - g\left( X^{\gamma_{t}} \right)}{h} -\frac{ g(X^{\bar{\gamma}^{\bar{h}}_{\bar{t}}}) - g\left( X^{\bar{\gamma}_{\bar{t}}} \right)}{\bar{h}}  - \int^{T}_{r} Z(r)\,dW_r, \quad \bar{t} \le r \le T.
  \end{equation}
  
  Applying \eqref{Ver diff g}, we estimate $(iii)$ in the lemma as above. We obtain that
  \begin{equation}
    \begin{aligned}\label{ver estimate in lem2}
       & \mathbb{E}\Big[ \sup_{s \in [\bar{t}, T]}\Big\lvert \Delta_{h}Y^{\gamma_{t}}(s) - \Delta_{\bar{h}}Y^{\bar{\gamma}_{\bar{t}}}(s) \Big\rvert  ^p  \Big] \le C \mathbb{E}\Big[ \Big\lvert \frac{g(X^{\gamma^{h}_{t}}) - g\left( X^{\gamma_{t}} \right)}{h} -\frac{ g(X^{\bar{\gamma}^{\bar{h}}_{\bar{t}}}) - g\left( X^{\bar{\gamma}_{\bar{t}}} \right)}{\bar{h}}  \Big\rvert ^p  \Big]\\
        =  \; &C\mathbb{E}\Big[ \Big\lvert \int_{0}^{1} Dg(X^{\gamma^{\theta h}_{t}})(DX^{\gamma^{\theta h}_t}) - Dg( X^{\bar{\gamma}^{\theta\bar{h}}_{\bar{t}}} )( DX^{\bar{\gamma}^{\theta\bar{h}}_{\bar{t}}} ) \,d\theta \Big\rvert^{p} \Big]                          
    \end{aligned}
  \end{equation}
  We use the fundamental theorem of calculus to estimate the second equality. These terms can be estimated using Lipschitz continuity of $Dg$.

The proof of the last term is similar to the proof of Theorem~\ref{Ver eq in thm1}.
\end{proof}

The next step is to apply Kolmogorov's continuity theorem. Note that this theorem is the version of Lemma~3.8 in \cite{PengWang2016bsde}.
\begin{thm} \label{Ver thm2}
  For each $\gamma_{t} \in D([0, t], \mathbb{R})$, $\{Y^{\gamma_{t}^{x} }(s) : s \in [0, T], x \in \mathbb{R}\}$ has a version which is $a.e$ in $C^{0, 2}([0, T] \times \mathbb{R})$.
\end{thm}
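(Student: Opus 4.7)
The plan is to deduce Theorem~\ref{Ver thm2} by applying Kolmogorov's continuity theorem to the random field $\{Y^{\gamma_{t}^{x}}(s)\}_{(s,x)\in[0,T]\times\mathbb{R}}$ and to its successive difference quotients in $x$, using the uniform moment bounds from Lemma~\ref{Ver lem1} together with the Fr\'{e}chet differentiability developed in Section~\ref{sec:d}. The statement being a path-dependent analogue of Lemma~3.8 in \cite{PengWang2016bsde}, the broad strategy is the same; the additional input required here is the Fr\'{e}chet-derivative representation of $\partial_{x}X^{\gamma_{t}^{x}}$ and $\partial_{xx}X^{\gamma_{t}^{x}}$ furnished by Propositions~\ref{main Fre prop1} and~\ref{main Fre 2 prop2}.

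First I would establish joint H\"{o}lder regularity of $(s,x)\mapsto Y^{\gamma_{t}^{x}}(s)$. Specializing Lemma~\ref{Ver lem1}(i) to the pair $(\gamma_{t}^{x},\gamma_{t}^{x'})$ (same $t$, so $|t-\bar t|=0$) gives
\begin{equation}
\mathbb{E}\Big[\sup_{s\in[t,T]}|Y^{\gamma_{t}^{x}}(s)-Y^{\gamma_{t}^{x'}}(s)|^{p}\Big]\le C_{p}\big(1+|x|^{q}+|x'|^{q}\big)|x-x'|^{p},
\end{equation}
while the standard BSDE modulus in $s$ (from Theorem~\ref{Ver thm1} and It\^{o}'s isometry applied to $Y^{\gamma_{t}^{x}}(s)-Y^{\gamma_{t}^{x}}(s')=\int_{s}^{s'}Z^{\gamma_{t}^{x}}\,dW$) yields $\mathbb{E}[|Y^{\gamma_{t}^{x}}(s)-Y^{\gamma_{t}^{x}}(s')|^{p}]\le C_{p}(1+|x|^{q})|s-s'|^{p/2}$. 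Combining these by the usual triangle argument produces a joint bound of the form $|s-s'|^{p/2}+|x-x'|^{p}$, uniformly on compacts in $x$, so Kolmogorov's criterion produces a continuous version on $[0,T]\times\mathbb{R}$.

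Next I would upgrade to $C^{0,1}$ in $x$. The candidate derivative is the solution $\mathcal{Y}^{\gamma_{t},x}(s):=Dg(X^{\gamma_{t}^{x}})(DX^{\gamma_{t}^{x}})-\int_{s}^{T}\mathcal{Z}^{\gamma_{t},x}\,dW_{r}$, where $DX^{\gamma_{t}^{x}}$ is the first-order Fr\'{e}chet/vertical derivative from Proposition~\ref{main Fre prop1}. To identify $\mathcal{Y}^{\gamma_{t},x}$ with the limit of $\Delta_{h}Y^{\gamma_{t}^{x}}$, I would write $\Delta_{h}Y^{\gamma_{t}^{x}}-\mathcal{Y}^{\gamma_{t},x}$ as a BSDE with terminal value $\tfrac{1}{h}[g(X^{\gamma_{t}^{x+h}})-g(X^{\gamma_{t}^{x}})]-Dg(X^{\gamma_{t}^{x}})(DX^{\gamma_{t}^{x}})$, apply the fundamental theorem of calculus as in \eqref{ver estimate in lem2}, and use the local Lipschitz bound \eqref{main ass g ineq} together with Proposition~\ref{main Fre prop1}(ii) to push the right-hand side to $0$ in $\mathcal{S}^{p}$ as $h\to 0$. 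The joint continuity of $(s,x)\mapsto\mathcal{Y}^{\gamma_{t},x}(s)$ then follows from Lemma~\ref{Ver lem1}(iii) applied to $\Delta_{h}Y^{\gamma_{t}^{x}}-\Delta_{\bar h}Y^{\gamma_{t}^{x'}}$ and another Kolmogorov argument.

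Finally, for the second derivative, I would iterate the construction: Proposition~\ref{main Fre 2 prop2} provides $D^{2}X^{\gamma_{t}^{x}}$, and Assumption~\ref{model payoff ass2}(ii) ensures that $D^{2}[g(X^{\gamma_{t}^{x}})]=D^{2}g(X^{\gamma_{t}^{x}})(DX^{\gamma_{t}^{x}},DX^{\gamma_{t}^{x}})+Dg(X^{\gamma_{t}^{x}})(D^{2}X^{\gamma_{t}^{x}})$ exists and is locally Lipschitz in $x$ in the appropriate $L^{p}$ sense. Running the BSDE comparison/fundamental-theorem-of-calculus argument one more level in the spirit of Lemma~\ref{Ver lem1}(iii)--(iv) gives an analogous moment bound for the second difference quotient $\Delta_{\bar h}\Delta_{h}Y^{\gamma_{t}^{x}}$, and Kolmogorov's theorem produces the desired $C^{0,2}$ version. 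The main obstacle I anticipate is precisely this last step: the BSDE for the second difference quotient involves two successive Fr\'{e}chet linearizations whose error terms, of the form $D^{2}g(X^{\gamma_{t}^{x}})-D^{2}g(X^{\gamma_{t}^{x'}})$ and $D^{2}X^{\gamma_{t}^{x}}-D^{2}X^{\gamma_{t}^{x'}}$, must be controlled uniformly in the differentiation step $h$, and careful bookkeeping with the polynomial-growth factors $(1+\|\gamma_{t}^{x}\|^{q}+\|\gamma_{t}^{x'}\|^{q})$ is needed so that Kolmogorov's criterion applies on compact sets in $x$.
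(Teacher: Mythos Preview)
Your proposal is correct and follows essentially the same route as the paper's proof: apply Kolmogorov's criterion using the $S^{p}$ estimates of Lemma~\ref{Ver lem1}, identify the candidate first and second vertical derivatives as solutions of the linearized BSDEs with terminal data $Dg(X^{\gamma_t})(DX^{\gamma_t})$ and $D^{2}g(X^{\gamma_t},DX^{\gamma_t})(DX^{\gamma_t})+Dg(X^{\gamma_t})(D^{2}X^{\gamma_t})$ respectively, and show convergence of the difference quotients via the fundamental-theorem-of-calculus representation and the Lipschitz bounds \eqref{main ass g ineq}. The paper's argument is terser but uses exactly these ingredients; your anticipated bookkeeping at the second-derivative level is precisely what the paper handles by the phrase ``the same proof works for $D_{xx}Y^{\gamma_t}$.''
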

\begin{proof}
  
  Fix $k, \bar{k} \in \mathbb{R}$. From the second inequality in Lemma~\ref{Ver lem1}, we obtain
that $(Y^{\gamma_{t}^{x}}, Z^{\gamma_{t}^{x}})$ is continuous with respect to $x$ in $\mathcal{S}^{2}$ and $\mathcal{M}^{2}$ from Kolmogorov's continuity theorem. Let $(D_{x}Y^{\gamma_{t}}, D_{x}Z^{\gamma_{t}}) $ be a solution to the BSDE
  \begin{align} \label{Ver dyn of DY in thm2}
    D_{x}Y^{\gamma_{t}}(s) = Dg(X^{\gamma_{t}})(DX^{\gamma_{t}}) - \int_{s}^{T}D_{x}Z^{\gamma_{t}}(u)\,du.
  \end{align}

    As in the proof of Lemma~\ref{Ver lem1}, we have 
    \begin{equation}
      \begin{aligned}
        \mathbb{E}\Big[ \big\lvert \frac{ g(X^{\gamma^{h}_{t}}) - g(X^{\gamma_{t}})}{h} - Dg(X^{\gamma_{t}})(DX^{\gamma_{t}}) \big\rvert ^{p} \Big] \le C (1 +\lVert \gamma_{t}^{h} \rVert^{q} )\lVert \gamma_{t}^{h} - \gamma_{t} \rVert^{p}.
      \end{aligned}
      \end{equation}
      It also means that $\Delta_{h}Y^{\gamma_{t}}$ converges to $D_xY^{\gamma_t}$ as $h \rightarrow 0$ in $S^p$ space.
  Using the $(ii)$ in Lemma~\ref{Ver lem1}, it is easy to show the continuity of $D_xY^{\gamma_{t}^{x}}$ with respect to $x$.

  Similarly, let $(D_{xx}Y^{\gamma_{t}}, D_{xx}Z^{\gamma_{t}})$ be a solution to the BSDE
  \begin{equation}
    D_{xx}Y^{\gamma_{t}}(s) = D^{2}g(X^{\gamma_{t}}, DX^{\gamma_{t}})(DX^{\gamma_{t}}) + Dg(X^{\gamma_{t}})(D^{2}X^{\gamma_{t}}) - \int_{s}^{T} D_{xx}Z^{\gamma_{t}}_{s}\,dW_{u},
  \end{equation}
  where $D^{2}X^{\gamma_{t}} = \frac{\partial^{2} DX^{\gamma^{h}_{t}}}{\partial h} \big|_{h = 0 }$. The same proof woks for $D_{xx}Y^{\gamma_{t}}$.
\end{proof}

\subsection{Time derivative}\label{Appendix:Time}
In this subsection, we prove the horizontal differentiability  of $u$ in \eqref{main def of u}. In addition, the operator $u$ is the solution to the path-dependent PDE. The entire idea follows the flow of Theorems~3.10 and 4.5 in \cite{PengWang2016bsde}. Recall that the operator $u : \tilde{D} \rightarrow \mathbb{R}$ is defined as $u_t(\gamma_{t}) = Y^{\gamma_{t}}(t)$, where $\tilde{D}= \bigcup_{t \in [0, T]} D([0, t], \mathbb{R})$.

\begin{thm} \label{Time thm1}
  The operator $u$ in \eqref{main def of u} has the horizontal derivative $D_t u : \tilde{D} \rightarrow \mathbb{R}$ and $u$ satisfies a path-dependent PDE:
  \begin{equation}
    \begin{aligned} \label{Tim ppde in thm1}
    D_{t}u_t(\gamma_{t}) &+ b_t(\gamma_{t}) D_{x}u_t(\gamma_{t}) + \frac{1}{2}\sigma_t^{2}(\gamma_{t}) D_{xx}u_t(\gamma_{t}) = 0, \quad &&\textnormal{for} \quad &&\gamma_{t} \in D([0, t], \mathbb{R}), \\
    u_T(\gamma_{T})      &= g(\gamma_{T}), \quad &&\textnormal{for} \quad &&\gamma_{T} \in D([0, T], \mathbb{R}).
  \end{aligned}
  \end{equation}
\end{thm}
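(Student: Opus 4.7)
My strategy is to combine the flow identity $Y^{\gamma_t}(s)=u_s(X^{\gamma_t,s})$ with the functional It\^o formula of Theorem~\ref{ftnl Ito formula} applied to $s\mapsto u_s(X^{\gamma_t,s})$ on $[t,T]$. Pathwise uniqueness of \eqref{main model sdeX} gives the flow identity, since conditioning on $\mathcal F_s$ reduces the SDE on $[s,T]$ to the one started from path $X^{\gamma_t,s}$; thus $u_s(X^{\gamma_t,s})=\mathbb E[g(X^{\gamma_t})\mid\mathcal F_s]$ is a martingale. Once $u$ is known to be a regular functional in the sense of Definition~\ref{ftnl regular def}, the It\^o formula yields
\[
du_s(X^{\gamma_t,s})=\Bigl[D_tu_s+b_sD_xu_s+\tfrac12\sigma_s^2 D_{xx}u_s\Bigr](X^{\gamma_t,s})\,ds+D_xu_s(X^{\gamma_t,s})\,\sigma_s(X^{\gamma_t,s})\,dW_s.
\]
The finite variation part must vanish almost surely; evaluating at $s=t$ with $\gamma_t$ arbitrary produces the PPDE \eqref{Tim ppde in thm1}, while the terminal condition is immediate from \eqref{model bsdeY}.

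The vertical derivatives $D_xu,D_{xx}u$, together with their left continuity in $\gamma_t$ and polynomial growth, follow from Theorem~\ref{Ver thm2} and the $\mathcal S^p$ estimates of Lemma~\ref{Ver lem1}. The substantive task is producing $D_tu$. Following the Peng--Wang template adapted to our path-dependent setting, I fix $\gamma_t$ and let $\bar\gamma^h\in D([0,t+h],\mathbb{R})$ denote the constant extension of $\gamma_t$ at level $\gamma_t(t)$. The martingale property gives $u_t(\gamma_t)=\mathbb E[u_{t+h}(X^{\gamma_t,t+h})]$, hence
\[
u_{t+h}(\bar\gamma^h)-u_t(\gamma_t)=\mathbb E\bigl[u_{t+h}(\bar\gamma^h)-u_{t+h}(X^{\gamma_t,t+h})\bigr].
\]
The two paths agree on $[0,t]$; on $(t,t+h]$ the perturbation $X^{\gamma_t}(r)-\gamma_t(t)$ has sup-norm of order $\sqrt h$, conditional mean $b_t(\gamma_t)(r-t)+o(h)$, and conditional quadratic variation $\sigma_t^2(\gamma_t)(r-t)+o(h)$ by \eqref{main model sdeX} and continuity of $b,\sigma$. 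A second-order expansion of $u_{t+h}$ along this path perturbation, combined with the SDE increment formulas, then gives
\[
\lim_{h\to 0+}\frac{u_{t+h}(\bar\gamma^h)-u_t(\gamma_t)}{h}=-b_t(\gamma_t)D_xu_t(\gamma_t)-\tfrac12\sigma_t^2(\gamma_t)D_{xx}u_t(\gamma_t),
\]
which simultaneously produces $D_tu_t(\gamma_t)$ and verifies the PPDE at $(t,\gamma_t)$. Continuity of the right-hand side in $(t,\gamma_t)$ gives the remaining regularity of $D_tu$.

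The main obstacle is justifying the second-order Taylor expansion of $u_{t+h}$ along a perturbation that lives on all of $(t,t+h]$, whereas Definition~\ref{ftnl ito defi} only provides vertical derivatives with respect to a constant bump at the endpoint. I plan to handle this by partitioning $(t,t+h]$ into $n$ subintervals, writing the path perturbation as a telescoping sum of vertical bumps at the partition points, applying the vertical second-order Taylor formula on each subinterval, and then passing to the continuous-time limit $n\to\infty$ by dominated convergence. The discretisation error is controlled via the left continuity and local Lipschitz bound of $D_xu$ and $D_{xx}u$ together with the sup-norm bound $\|X^{\gamma_t,t+h}-\bar\gamma^h\|=O_{L^p}(\sqrt h)$ from Lemma~\ref{Ver lem2}; the $L^p$ estimates on $(Y^{\gamma_t},Z^{\gamma_t})$ from Lemma~\ref{Ver lem1} and the moment bounds on $X^{\gamma_t}$ supply the uniform integrability needed to commute limit and expectation.
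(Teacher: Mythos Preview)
Your plan for computing $D_t u$ has a genuine gap. The difference $u_{t+h}(\bar\gamma^h)-u_{t+h}(X^{\gamma_t,t+h})$ involves two paths that differ on the whole interval $(t,t+h]$, whereas the vertical derivative $D_x u_{t+h}$ of Definition~\ref{ftnl ito defi} only measures sensitivity to a jump at the single point $t+h$. Your proposed fix---partitioning $(t,t+h]$ and telescoping through vertical bumps at the partition points---does not resolve this: after you replace the continuous perturbation by a step function on the partition, the intermediate paths still differ from one another on an interval, not just at the endpoint $t+h$, so the vertical Taylor formula still does not apply to the successive increments. In a genuinely path-dependent model (e.g.\ $b_r(\eta)$ depending on $\int_0^r \eta(u)\,du$), the value $u_{t+h}(\eta)$ depends on all of $\eta|_{(t,t+h]}$, not merely on $\eta(t+h)$, and no partitioning of $(t,t+h]$ produces a telescoping sum of purely vertical increments of the single functional $u_{t+h}$. (Your opening It\^o-formula argument is also circular as stated: applying Theorem~\ref{ftnl Ito formula} to $u$ already presupposes that $D_t u$ exists.)

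The paper's route is structurally different: instead of discretising the perturbation, it discretises the \emph{coefficients}. One replaces $b,\sigma,g$ by $b\circ\varphi^n_L$, $\sigma\circ\varphi^n_L$, $g\circ\varphi^n_R$, where $\varphi^n_L,\varphi^n_R$ project a path onto step functions on a partition $\pi_n$. The resulting approximation $u^{(n)}(\bar\gamma_{\bar t})=Y^{n,\bar\gamma_{\bar t}}(\bar t)$ is then a genuine function of finitely many real variables $\bar\gamma_{\bar t}(t_0),\ldots,\bar\gamma_{\bar t}(t_k),\bar\gamma_{\bar t}(\bar t)$, and on each subinterval $[t_k,t_{k+1}]$ the classical Pardoux--Peng Feynman--Kac theorem yields a finite-dimensional parabolic PDE with an honest time derivative $\partial_t$. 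Integrating those PDEs and telescoping across subintervals gives
\[
u^{(n)}(\gamma_{t,t+\delta})-u^{(n)}(\gamma_t)=-\int_t^{t+\delta}\tilde L^{(n)}\tilde u^{(n)}\,ds,
\]
and one then passes to the limit $n\to\infty$ using $u^{(n)}\to u$, $D_x u^{(n)}\to D_x u$, $D_{xx}u^{(n)}\to D_{xx}u$ (Lemma~\ref{Time u^n Dxu^n}) to obtain $u_{t+\delta}(\gamma_{t,t+\delta})-u_t(\gamma_t)=-\int_t^{t+\delta} Lu_s(\gamma_{t,s})\,ds$. Continuity of $s\mapsto Lu_s(\gamma_{t,s})$ then gives $D_t u_t(\gamma_t)=-Lu_t(\gamma_t)$. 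The key point is that the discretisation happens at the level of the SDE/BSDE data, which converts the problem into one where classical PDE theory applies, rather than attempting a path-space Taylor expansion of $u$ itself.
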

  We denote a differential operator $Lu =  b D_{x}u + \frac{1}{2}\sigma^{2} D_{xx}u$. First, we give the main ideas of the proof. For the initial path $\gamma_{t} \in D([0, t], \mathbb{R})$, define the horizontally extended path $\gamma_{t, t+\delta} \in D([0, t+\delta], \mathbb{R})$ as $\gamma_{t+\delta}(s) = \gamma_t(s) \mathds{1}_{[0, t]} + \gamma_{t}(t) \mathds{1}_{(t, t+\delta]}$ for small $\delta > 0$. The main idea is to prove each of the following equalities
\begin{equation}\label{Tim main idea}
  \begin{aligned}
  &u_{t + \delta}(\gamma_{t, t + \delta}) - u_{t}(\gamma_{t}) = \lim_{n \rightarrow \infty} u^{(n)}(\gamma_{t, t+\delta}) - u^{(n)}(\gamma_{t}) \\
  = \; & \lim_{n \rightarrow \infty}\int_{t}^{t+\delta} \partial_{t}\tilde{u}^{(n)}(s, \gamma_{t}(t_0), \cdots, \gamma_{t}(t_j))\,ds \\
   =  \;  &\lim_{n \rightarrow \infty} \int_{t}^{t+\delta} -\tilde{L}^{(n)}(s, \gamma_{t}(t_0), \cdots, \gamma_{t}(t_{j(n)}))\tilde{u}^{(n)}(s, \gamma_{t}(t_0), \cdots, \gamma_{t}(t_{j(n)}))\,ds \\
  = \; &- \int_{t}^{t+\delta}L(\gamma_{t, s})u_{s}(\gamma_{t, v})\,ds,
\end{aligned}
\end{equation}
for all small $\delta > 0$ and some suitable $u^{(n)}$, $\tilde{u}^{(n)}$ and $\tilde{L}^{n}$.
After that, we obtain that 
\begin{align}\label{Time aim}
 D_tu_t(\gamma_{t}) := \lim_{\delta \rightarrow 0} \frac{u_{t + \delta}(\gamma_{t, t+\delta}) - u_{t}(\gamma_{t}) }{\delta} = -Lu_{t}(\gamma_{t}).
\end{align}
 This means that $u$ has a horizontal derivative, and also that $u$ is a solution to the path-dependent PDE
\begin{align}
  D_{t}u_t(\gamma_t) + Lu_t(\gamma_t) = 0, \quad \textnormal{for} \quad  \gamma_{t}\in D([0, t], \mathbb{R}).
\end{align}

First, we introduce $X^{n, \gamma_{t}}$ and $(Y^{n, \gamma_{t}}, Z^{n, \gamma_{t}})$ by approximating the coefficient function $b$, $\sigma$ and define the operator $u^{(n)}$ in \eqref{Tim main idea}. The approximation may be considered as the detailed version in Proposition~\ref{main Fre prop1}.

Fix the time $t \in [0, T]$ and initial path $\gamma_{t} \in D([0, t], \mathbb{R})$. Let $n \in \mathbb{N}$ be a number sufficiently larger than the number of discontinuity points of $\gamma_{t}$. Then, we can define a sequence of partitions $\pi_n = \{ 0 = t_0 \le t_1 \le \cdots \le t_n = T \}$ tending to the identity (see the paragraph after Lemma~\ref{Fre Lem1}) which covers all discontinuity points of $\gamma_{t}$ and the time $t$. We also define linear operators  $\varphi^{n}_{L}, \varphi^{n}_R : \tilde{D} \rightarrow \tilde{D}$ as for $s \in [0, T]$:
\begin{equation}
\begin{aligned}
  (\varphi^{n}_{L})_v(\eta) &=  \sum_{j=0}^{n-1} \eta(t_{j} \wedge s) \mathds{1}_{[t_{j} \wedge s , t_{j+1}\wedge s )}(v) + \eta(T\wedge s)\mathds{1}_{\{T \wedge s \} }(v), \\
  (\varphi^{n}_{R})_v(\eta) &=  \sum_{j=0}^{n-1} \eta(t_{j+1} \wedge s) \mathds{1}_{[t_{j} \wedge s , t_{j+1}\wedge s )}(v) + \eta(T\wedge s)\mathds{1}_{\{T \wedge s \} }(v),
\end{aligned}
\end{equation}
for $\eta \in D([0, s], \mathbb{R})$ and $v \in [0, s]$ where $\tilde{D} = \bigcup_{t \in [0, T]}D([0, t], \mathbb{R})$ defined in Section~\ref{ftnl Ito formula}. The operators $\varphi^{n}_L$ and $\varphi^{n}_R$ convert the path to a piecewise constant path. They help us to approximate a non-anticipative functional by some good function.

For all sufficiently large $n \in \mathbb{N}$ and a path $\bar{\gamma}_{\bar{t}} \in D([0, \bar{t}], \mathbb{R})$, we define the process $X^{n, \bar{\gamma}_{\bar{t}}}$ as a solution to the SDE
\begin{equation}
  \begin{aligned} \label{Tim dyn of Xn}
    X^{n, \bar{\gamma}_{\bar{t}}}(s) &= \bar{\gamma}_{\bar{t}}(s) + \int_{t}^{s} (b \circ \varphi^{n}_{L})_r(X^{n, \bar{\gamma}_{\bar{t}}} )\,dr + \int_{t}^{s} (\sigma \circ \varphi^{n}_{L})_r (X^{n, \bar{\gamma}_{\bar{t}}})\,dW_{r}, \quad &&s \in [\bar{t} , T], \\
    X^{n, \bar{\gamma}_{\bar{t}}}(s) &= \bar{\gamma}_{\bar{t}}(s), \quad &&s \in [0, \bar{t}].
  \end{aligned}
  \end{equation}
Similarly, we define $(Y^{n, \bar{\gamma}_{\bar{t}}} , Z^{n, \bar{\gamma}_{\bar{t}}})$ as a solution to the BSDE
\begin{align}
Y^{n, \bar{\gamma}_{\bar{t}}}(s) = g \circ \varphi^{n}_{R}(X^{n, \bar{\gamma}_{\bar{t}}}) - \int_{s}^{T} Z^{n, \bar{\gamma}_{\bar{t}}} (r) \,dW_{r}.
\end{align}
Then, we can consider the derivative $DX^{n, \bar{\gamma}_{\bar{t}}}$ and $D_xY^{n, \bar{\gamma}_{\bar{t}}}$ as in \eqref{model sdeDX} and \eqref{Ver dyn of DY in thm2}. We set $u^{(n)} (\bar{\gamma}_{\bar{t}}) = Y^{n, \bar{\gamma}_{\bar{t}}}(\bar{t})$ and $D_xu^{(n)}(\bar{\gamma}_{\bar{t}})  = D_xY^{n, \bar{\gamma}_{\bar{t}}}(\bar{t})$. Then, $u^{(n)}, D_xu^{(n)}  : \tilde{D} \rightarrow \mathbb{R}$ is the approximation of $u, D_xu$.

\begin{lemma} \label{Time u^n Dxu^n}
     Fix $\bar{\gamma}_{\bar{t}} \in D([0, \bar{t}], \mathbb{R})$. If the discontinuity points of $\bar{\gamma}_{\bar{t}}$ are included in those of $\gamma_{t}$, we have that $u^{(n)}(\bar{\gamma}_{\bar{t}})$ and $D_xu^{(n)}(\bar{\gamma}_{\bar{t}})$ converge to $u(\bar{\gamma}_{\bar{t}})$ and $ D_x(\bar{\gamma}_{\bar{t}})$ as the mesh $n \rightarrow \infty$.
\end{lemma}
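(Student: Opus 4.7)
The plan is to first establish convergence of the approximating forward process, then push it through the BSDE to get $u^{(n)}\to u$, and finally run the same argument one level up at the derivative level. Concretely, I would begin by showing $\lVert X^{n,\bar\gamma_{\bar t}} - X^{\bar\gamma_{\bar t}}\rVert_{S^p}\to 0$ as $n\to\infty$. This is essentially the path-dependent version of Lemma~\ref{Fre Lem2} with the initial value replaced by a path: since the discontinuities of $\bar\gamma_{\bar t}$ lie in the partition, $\varphi^n_L(\bar\gamma_{\bar t})\to \bar\gamma_{\bar t}$ uniformly, and for an adapted c\`adl\`ag process $H$ the càdlàg regularity together with right-continuity gives $\lVert \varphi^n_L(H)-H\rVert\to 0$ almost surely on the complement of the jump set, which is handled exactly as in the proof of Lemma~\ref{Fre Lem2}. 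The local Lipschitz continuity of $b,\sigma$ from Assumption~\ref{main coeff ass1} together with Lemma~\ref{Fre Lem1} then gives the desired $S^p$-convergence.

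Next, I would deduce $u^{(n)}(\bar\gamma_{\bar t})\to u(\bar\gamma_{\bar t})$. Since both $(Y^{n,\bar\gamma_{\bar t}},Z^{n,\bar\gamma_{\bar t}})$ and $(Y^{\bar\gamma_{\bar t}},Z^{\bar\gamma_{\bar t}})$ are solutions to driverless BSDEs whose only difference lies in the terminal data, the standard a priori estimate of Theorem~\ref{Ver thm1} gives
\begin{equation}
\mathbb{E}\Big[\sup_{s\in[\bar t,T]}\lvert Y^{n,\bar\gamma_{\bar t}}(s)-Y^{\bar\gamma_{\bar t}}(s)\rvert^{p}\Big] \le C\,\mathbb{E}\big[\lvert g\circ\varphi^n_R(X^{n,\bar\gamma_{\bar t}})-g(X^{\bar\gamma_{\bar t}})\rvert^{p}\big].
\end{equation}
The right-hand side vanishes by the local Lipschitz and polynomial growth hypotheses on $g$ in Assumption~\ref{model payoff ass2}, together with the uniform convergences $\varphi^n_R(X^{n,\bar\gamma_{\bar t}})\to X^{\bar\gamma_{\bar t}}$ in $S^p$ and $L^p$-boundedness of $\lVert X^{\bar\gamma_{\bar t}}\rVert$. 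Evaluating at $s=\bar t$ yields the first convergence claim.

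The derivative statement is handled in exactly the same spirit, one step higher. The key observation is that $\varphi^n_L$ and $\varphi^n_R$ are bounded linear operators, so by the chain rule
\begin{equation}
D(b\circ\varphi^n_L)(\eta)(\delta)=Db(\varphi^n_L(\eta))(\varphi^n_L(\delta)),
\end{equation}
and analogously for $\sigma$ and $g$. Consequently $DX^{n,\bar\gamma_{\bar t}}$ solves an SDE of the same shape as \eqref{model sdeDX} with coefficients $Db(\varphi^n_L(X^{n,\bar\gamma_{\bar t}}))(\varphi^n_L(\cdot))$ and $D\sigma(\varphi^n_L(X^{n,\bar\gamma_{\bar t}}))(\varphi^n_L(\cdot))$, while $D_x Y^{n,\bar\gamma_{\bar t}}$ solves a BSDE with terminal $Dg(\varphi^n_R(X^{n,\bar\gamma_{\bar t}}))(\varphi^n_R(DX^{n,\bar\gamma_{\bar t}}))$. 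Using the local Lipschitz continuity of $Db,D\sigma,Dg$ in the operator norm from Assumptions~\ref{main coeff ass1} and~\ref{model payoff ass2}, together with the already established $S^p$-convergences $\varphi^n_L(X^{n,\bar\gamma_{\bar t}})\to X^{\bar\gamma_{\bar t}}$ and the linear-growth bound \eqref{Pf lin cond}, a standard Gr\"onwall/BDG argument on the SDE for $DX^{n,\bar\gamma_{\bar t}}-DX^{\bar\gamma_{\bar t}}$ gives $DX^{n,\bar\gamma_{\bar t}}\to DX^{\bar\gamma_{\bar t}}$ in $S^p$. The BSDE stability estimate of Theorem~\ref{Ver thm1} then yields $D_xY^{n,\bar\gamma_{\bar t}}(\bar t)\to D_xY^{\bar\gamma_{\bar t}}(\bar t)$ in $L^p$, which is exactly $D_xu^{(n)}(\bar\gamma_{\bar t})\to D_xu(\bar\gamma_{\bar t})$.

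The main obstacle I anticipate is controlling the derivative level: the linear growth of $Db,D\sigma$ means the SDE for $DX^{n,\bar\gamma_{\bar t}}$ has stochastic Lipschitz coefficients depending on $X^{n,\bar\gamma_{\bar t}}$, so uniform-in-$n$ $S^p$-bounds and a Gr\"onwall-type closing estimate have to be produced carefully; in particular one must extract the $(1+\lVert X^{n,\bar\gamma_{\bar t}}\rVert)$ factor in the difference coefficient and absorb it using Cauchy--Schwarz and the uniform $L^{2p}$-bound on $\lVert X^{n,\bar\gamma_{\bar t}}\rVert$ that follows from Lemma~\ref{Fre Lem1}. Once this quantitative step is in place, passing from the SDE estimate to the BSDE estimate for $D_xY^{n,\bar\gamma_{\bar t}}$ is routine.
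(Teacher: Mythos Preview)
Your proposal is correct and follows essentially the same approach as the paper: first establish $S^p$-convergence of $X^{n,\bar\gamma_{\bar t}}$ and $DX^{n,\bar\gamma_{\bar t}}$ to $X^{\bar\gamma_{\bar t}}$ and $DX^{\bar\gamma_{\bar t}}$ via the chain-rule identity $D[F\circ\varphi^n_L](\eta)(\delta)=DF(\varphi^n_L(\eta))(\varphi^n_L(\delta))$ together with Lipschitz continuity, linear growth and dominated convergence, then transfer to $Y^{n,\bar\gamma_{\bar t}}$ and $D_xY^{n,\bar\gamma_{\bar t}}$ through the BSDE stability estimate of Theorem~\ref{Ver thm1} applied to the terminal data. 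The obstacle you flag about the $(1+\lVert X^{n,\bar\gamma_{\bar t}}\rVert)$ factor is handled in the paper exactly as you suggest, so there is no gap.
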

\begin{proof}
    First, we need to show $X^{n, \bar{\gamma}_{\bar{t}}}$ and $DX^{n, \bar{\gamma}_{\bar{t}}}$ converge to $X^{\bar{\gamma}_{\bar{t}}}$ and $DX^{\bar{\gamma}_{\bar{t}}}$. From the simple extension of Theorem~3.2.2 and Theorem~3.2.4 in \cite{Zhang2017BSDE} to the case of path-dependent, it is enough to show 
    \begin{equation} \label{Time lim_1}
    \mathbb{E}\Big[ \Big\lvert   \int_{t}^{T} F^{(1)}_r(X^{\bar{\gamma}_{\bar{t}}})(DX^{\bar{\gamma}_{\bar{t}}}) - F^{(2)}_r(X^{\bar{\gamma}_{\bar{t}}})(DX^{\bar{\gamma}_{\bar{t}}}) \,dr  \Big\rvert^{p} \Big] \rightarrow 0,
  \end{equation}
where $F^{(1)} = b, \sigma, Db, D\sigma$ and $F^{(2)} = b\circ \varphi^{n}_L, \sigma \circ \varphi^{n}_L, D[b \circ \varphi^{n}_L], D[\sigma \circ \varphi^{n}_L]$.

  Since $\varphi^{n}_{L}$ is a linear operator and $\frac{\lVert \varphi^{n}_{L}(\eta) \rVert }{\lVert \eta \rVert } = \lVert \varphi^{n}_{L}(\frac{ \eta }{\lVert \eta \rVert}) \rVert \le 1$ from the definition, we obtain the Fr\'{e}chet derivative of $F \circ \varphi$ as $D[F \circ \varphi^{n}_{L}](\eta)(\delta) = DF(\varphi^{n}_{L}(\eta))(\varphi^{n}_{L}(\delta))$.
Using Lipschitz continuity, linear growth and the dominant convergence theorem, we have $X^{n, \bar{\gamma}_{\bar{t}}}, DX^{n, \bar{\gamma}_{\bar{t}}} \rightarrow X^{\bar{\gamma}_{\bar{t}}}, DX^{\bar{\gamma}_{\bar{t}}}$ in $S^p$ space.

    Since $u^{(n)}(\bar{\gamma}_{\bar{t}})$, $D_xu^{(n)}(\bar{\gamma}_{\bar{t}})$, $u_{\bar{t}}(\bar{\gamma}_{\bar{t}})$, $ D_xu_{\bar{t}}(\bar{\gamma}_{\bar{t}})$ are all solution to some BSDE, we can use the argument in Lemma~\ref{Ver eq in thm1}. Thus, showing that 
    \begin{equation}\label{Time eq:12}
  \mathbb{E}[ \lvert D[g \circ \varphi^{n}_{R}](X^{n, \bar{\gamma}_{\bar{t}}}) ( DX^{n, \bar{\gamma}_{\bar{t}}}) - Dg(X^{\bar{\gamma}_{\bar{t}}})(DX^{\bar{\gamma}_{\bar{t}}}) \rvert^{p}  ],
   \mathbb{E}[ \lvert g(X^{\bar{\gamma}_{\bar{t}}} ) - g \circ \varphi^{n}_{R}(X^{n, \bar{\gamma}_{\bar{t}}}) \rvert^p ]
  \end{equation}
  converges to $0$ as $n \rightarrow \infty$ completes the proof. Note that this is easy to check from Lipschitz continuity and linear growth of $g, Dg$.
\end{proof}

The above Lemma~\ref{Time u^n Dxu^n} can be extended to the case of $D_{xx}u^{(n)}( \bar{\gamma}_{\bar{t}} ) \rightarrow D_{xx}u( \bar{\gamma}_{\bar{t}} )$.
The next step is to find the family of functions $(u_{k})_{k = 0, 1, \cdots, n}$ satisfying
\begin{equation}\label{Time 2nd step equality}
  Y^{n, \bar{\gamma}_{\bar{t}}}(\bar{t}) = u^{(n)}(\bar{\gamma}_{\bar{t}}) = u_{k}(\bar{t},\bar{\gamma}_{\bar{t}}(t_{0}),\cdots , \bar{\gamma}_{\bar{t}}(t_{k}),\bar{\gamma}_{\bar{t}}(\bar{t}))
\end{equation}
for $\bar{\gamma}_{\bar{t}} \in D([0, \bar{t}], \mathbb{R})$ with $\bar{t} \in [t_{k}, t_{k+1}]$. In this process, we determine that $u_k$ is actually a solution to some BSDE and is a smooth solution to some PDE with a differential operator. Later, we approximate the differential operator to what we desire.

To do this, we need to regard $X^{n, \bar{\gamma}_{\bar{t}}}$ as a solution to a function coefficient SDE. First, we fix $s \in [t_k, t_{k+1})$ for some $k = 0, \dots, n-1$.
For each path $\eta \in D([0, T], \mathbb{R})$, the value $b_{s}(\varphi^{n}_L(\eta))$ depends only on the k points of a path $\eta$, since the operator $b$ is non-anticipative functional. Thus, we can define $b^{(n)}_{k} : [t_{k}, t_{k+1}) \times \mathbb{R}^{k+1} \rightarrow \mathbb{R}$ for each $k = 0, \cdots, n-1$, 
\begin{equation} \label{Time def of bnk}
  b^{(n)}_{k}(s, \eta(t_{0}) , \eta(t_{1}), \cdots , \eta(t_{k}))  = (b \circ \varphi^{n}_L)_s(\eta)
\end{equation}

Similarly, we define $\sigma^{(n)}_{k}$ for each $k = 0, \cdots, n-1$. Then, the dynamics of $X^{n, \bar{\gamma}_{\bar{t}}}$ is represented using the notation $b^{(n)}_{k}, \sigma^{(n)}_{k}$ on each time interval $s \in [t_{k}, t_{k+1})$ by
\begin{equation} \label{Time SDE version}
\begin{aligned}
  X^{n, \bar{\gamma}_{\bar{t}}}(s) = X^{n, \bar{\gamma}_{\bar{t}}}(t_k) &+ \int_{t_k}^{s} b^{(n)}_k\left(r, X^{n, \bar{\gamma}_{\bar{t}}}(t_0), \cdots, X^{n, \bar{\gamma}_{\bar{t}}}(t_k) \right)\,dr \\
  & +\int_{t_k}^{s} \sigma^{(n)}_k\left(r, X^{n, \bar{\gamma}_{\bar{t}}}(t_0), \cdots, X^{n, \bar{\gamma}_{\bar{t}}}(t_k) \right)\,dW_r.
\end{aligned}
\end{equation}
In the same way, we define $g^{(n)}$ by the function representation of $g \circ \varphi^{n}_{R}$ as 
\begin{equation}
  g^{(n)}(\eta(t_0), \cdots, \eta(t_n))  = g \circ \varphi^{n}_R(\eta).
\end{equation}
 Then, the dynamics of $(Y^{n, \bar{\gamma}_{\bar{t}}}, Z^{n, \bar{\gamma}_{\bar{t}}})$ satisfies the following for each time interval $s \in [t_{k}, t_{k+1})$:
\begin{equation} \label{Time BSDE version}
\begin{aligned}
  Y^{n, \bar{\gamma}_{\bar{t}}}(s) & = Y^{n, \bar{\gamma}_{\bar{t}}}(t_{k+1}) - \int_{s}^{t_{k+1}}Z^{n, \bar{\gamma}_{\bar{t}}}(r)\,dW_{r}, \quad \textnormal{for} \quad s \in [t_{k}, t_{k+1}), \\
  Y^{n, \bar{\gamma}_{\bar{t}}}(T) & = g^{(n)}(X^{n, \bar{\gamma}_{\bar{t}}}(t_{0}), X^{n, \bar{\gamma}_{\bar{t}}}(t_{1}),  \cdots , X^{n, \bar{\gamma}_{\bar{t}}}(T) ).
\end{aligned}
\end{equation}
Finally, we define the differential operator $L^{(n)}_{k} = b^{(n)}_{k} \frac{\partial}{\partial x_k} + \frac{1}{2}(\sigma^{(n)}_{k})^{2} \frac{\partial}{\partial x_k}$, where $x_k$ is the last coordinate for each $k = 0, \cdots, n-1$.

 Now, we find a PDE from the last interval which the solution to BSDE $Y^{n, \bar{\gamma}_{\bar{t}}}(\bar{t})$ satisfies. 
Let consider for $[t_{n-1}, t_n]$ in \eqref{Time SDE version} and \eqref{Time BSDE version}.
If $X^{n, \bar{\gamma}_{\bar{t}}}(t_0)$, $\cdots$, $X^{n, \bar{\gamma}_{\bar{t}}}(t_{n-1})$ are known, we can regard \eqref{Time SDE version} and \eqref{Time BSDE version} as standard SDE and BSDE with function coefficients. Thus, we can apply the result in \cite{pardoux1992backward}.
For parameters $x_0, \cdots, x_{n-1} \in \mathbb{R}$, let $u_{n-1} : [t_{n-1}, t_n] \times \mathbb{R}^{n} \times \mathbb{R} \rightarrow \mathbb{R}$ be a solution to the PDE
\begin{equation}\label{Tim pde with index n-1}
\begin{aligned} 
  \partial_{t}u_{n-1}(s, x_{0}, \cdots, x_{n-1}, y) &+ L^{(n)}_{n-1}(s, x_{0}, \cdots, x_{n-1})u_{n-1}(s, x_{0}, \cdots, x_{n-1}, y) = 0, \\
  u_{n-1}(T, x_{0}, \cdots, x_{n-1}, y) &= g^{(n)}(x_{0}, \cdots, x_{n-1}, y),
\end{aligned}
\end{equation}
 for $s \in [t_{n-1}, t_n]$ and $y \in \mathbb{R}$. The parameter determine coefficients of PDE and the solution at $(t_{n-1}, y)$ means that the initial point of SDE is $y$ at time $t_{n-1}$. Thus, we have from Theorem~3.2 in \cite{pardoux1992backward}
\begin{equation}\label{Time terminal condition_1}
  u_{n-1}(t_{n-1}, X^{n, \bar{\gamma}_{\bar{t}}}(t_{0}), \cdots, X^{n, \bar{\gamma}_{\bar{t}}}(t_{n-1}), X^{n, \bar{\gamma}_{\bar{t}}}(t_{n-1}) ) = Y^{n, \bar{\gamma}_{\bar{t}}}(t_{n-1}).
\end{equation}
Note that this term becomes a terminal condition for the next interval $[t_{n-2}, t_{n-1}]$. Moreover, for a path $\bar{\gamma}_{\bar{t}}$ with $\bar{t} \in [t_{n-1}, t_n]$, the following holds,
\begin{equation}
\begin{aligned}\label{Time terminal condition}
    Y^{n, \bar{\gamma}_{\bar{t}}}(\bar{t}) &= u_{n-1}(\bar{t}, X^{n, \bar{\gamma}_{\bar{t}}}(t_{0}), \cdots, X^{n, \bar{\gamma}_{\bar{t}}}(t_{n-1}), X^{n, \bar{\gamma}_{\bar{t}}}(\bar{t}) )\\
  &= u_{n-1}(\bar{t}, \bar{\gamma}_{\bar{t}}(t_{0}), \cdots, \bar{\gamma}_{\bar{t}}(t_{n-1}), \bar{\gamma}_{\bar{t}}(\bar{t}) ) .
\end{aligned}
\end{equation}

Now, we expand the above argument to the entire interval. For $k = 0, \cdots, n-2$, we inductively define $u_{k}(s, x_{0}, \cdots, x_{k}, y) : [t_{k}, t_{k+1}] \times \mathbb{R}^{k+1} \times \mathbb{R}$ by a solution to the following PDE with parameters $x_{0}, \cdots, x_{k} \in \mathbb{R}$:
\begin{equation} \label{Time pde index k}
\begin{aligned}
  \partial_{t}u_{k}(s, x_{0}, \cdots, x_{k}, y) &+ L^{(n)}_{k}(s, x_{0}, \cdots, x_{k})u_{k}(t, x_{0}, \cdots, x_{k}, y) = 0, \\
  u_{k}(t_{k+1}, x_{0}, \cdots, x_{k-1}, y) &= u_{k+1}(t_{k+1}, x_{0}, \cdots, x_{k-1}, y, y).
\end{aligned}
\end{equation}
For a path $\bar{\gamma}_{\bar{t}} $ with $\bar{t} \in [t_{k}, t_{k+1}]$, we have an equation as in equation \eqref{Time terminal condition}, 
\begin{align}
  u^{(n)}(\bar{\gamma}_{\bar{t}}) = Y^{n, \bar{\gamma}_{\bar{t}}}(\bar{t}) = u_{k}(\bar{t}, \bar{\gamma}_{\bar{t}}(t_{0}), \cdots, \bar{\gamma}_{\bar{t}}(t_{k}), \bar{\gamma}_{\bar{t}}(\bar{t}) ) .
\end{align}


Under the above setting, we define a function $\tilde{u}^{(n)}$ and a differential operator $\tilde{L}^{(n)}$ that are used in \eqref{Tim main idea}. Because the sequence of partitions $\pi^{(n)}$ covers the time $t$, there exists $j \in \{0, 1, \cdots, n-1\}$ such that $t = t_{j}$.
We define a function $\tilde{u}^{(n)} : [t, t+\delta) \times \mathbb{R}^{j+1} \rightarrow \mathbb{R}$ for small $\delta > 0$ and a differential operator as 
\begin{equation} \label{Time def of uLtilde}
\begin{aligned}
  \tilde{u}^{(n)}(v, \eta(t_0), \cdots, \eta(t_j)) &= \sum_{i = j}^{n-1} u_i(v, \eta(t_0), \cdots, \eta(t_j), \eta(t), \cdots, \eta(t)) \mathds{1}_{[t_i, t_{i+1} )}(v) \\
  \tilde{L}^{(n)} (v, \eta(t_0), \cdots, \eta(t_j)) &= \sum_{i=j}^{n-1} L^{(n)}_i(v, \eta(t_0), \cdots, \eta(t_j), \eta(t), \cdots, \eta(t)) \mathds{1}_{( t_i, t_{i+1} )}(v).
\end{aligned} 
\end{equation}
Note that the function $\tilde{u}^{(n)}$ is differentiable with respect to the time  except at points in the partition $\sigma$. 
This definition is used to represent $u^{(n)}(\gamma_{t, t + \delta}) - u^{(n)}(\gamma_{t})$ as a single term.
\begin{lemma}
  With a defined $\tilde{u}^{(n)}$ and a differential operator $\tilde{L}^{(n)}$ in \eqref{Time def of uLtilde}, the following holds:
\begin{equation}
  \begin{aligned}
    u^{(n)}(\gamma_{t, t+\delta}) - u^{(n)}(\gamma_{t}) = \int_{t}^{t+\delta} \partial_{t}\tilde{u}^{(n)}(v, \gamma_{t}(t_0), \cdots, \gamma_{t}(t_j))\,dv
  \end{aligned}
\end{equation}
  The integral of $\partial \tilde{u}^{(n)}$ is defined except at points in the partition.
\end{lemma}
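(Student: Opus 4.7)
The plan is to apply the fundamental theorem of calculus on each subinterval determined by the partition points lying in $[t, t+\delta]$ and to glue the pieces together using the terminal conditions of the PDE system \eqref{Time pde index k}. Let $i \in \{j, j+1, \ldots, n-1\}$ be the unique index with $t+\delta \in [t_i, t_{i+1})$; the relevant partition points in $(t, t+\delta]$ are then $t_{j+1}, \ldots, t_i$ (possibly none, if $i = j$). On each subinterval $[t_k, t_{k+1}) \cap [t, t+\delta]$ with $k \in \{j, \ldots, i\}$, the definition \eqref{Time def of uLtilde} gives
\begin{equation}
\tilde{u}^{(n)}(v, \gamma_{t}(t_0), \ldots, \gamma_{t}(t_j)) = u_k(v, \gamma_{t}(t_0), \ldots, \gamma_{t}(t_j), \gamma_{t}(t), \ldots, \gamma_{t}(t)),
\end{equation}
where the trailing value $\gamma_{t}(t)$ appears exactly $k - j + 1$ times so as to fill the remaining slots $x_{j+1}, \ldots, x_k, y$ of $u_k$. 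Because $u_k$ with these spatial arguments frozen is a classical solution of \eqref{Time pde index k}, it is $C^1$ in time on $[t_k, t_{k+1}]$, so $\partial_{v} \tilde{u}^{(n)}$ exists and is integrable on this piece, and the fundamental theorem of calculus applies there.

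The next step is to check that $\tilde{u}^{(n)}$ is continuous across each interior partition point $t_{k+1}$, $k = j, \ldots, i-1$. The left limit (from the $u_k$ piece) is $u_k(t_{k+1}, \gamma_{t}(t_0), \ldots, \gamma_{t}(t_j), \gamma_{t}(t), \ldots, \gamma_{t}(t))$ with $k-j+1$ trailing $\gamma_{t}(t)$'s, while the right value (from the $u_{k+1}$ piece) is $u_{k+1}(t_{k+1}, \gamma_{t}(t_0), \ldots, \gamma_{t}(t_j), \gamma_{t}(t), \ldots, \gamma_{t}(t))$ with $k-j+2$ trailing copies. These agree by the terminal condition of \eqref{Time pde index k}, i.e.\ $u_k(t_{k+1}, x_0, \ldots, x_k, y) = u_{k+1}(t_{k+1}, x_0, \ldots, x_k, y, y)$, evaluated at $y = \gamma_{t}(t)$. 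Hence $\tilde{u}^{(n)}$ is continuous across every $t_{k+1}$, $k = j, \ldots, i-1$, even though $\partial_v \tilde{u}^{(n)}$ need not be defined at those countably many times.

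Finally I would identify the two endpoint values: at $v = t = t_j$, $\tilde{u}^{(n)}(t, \gamma_{t}(t_0), \ldots, \gamma_{t}(t_j)) = u_j(t, \gamma_{t}(t_0), \ldots, \gamma_{t}(t_j), \gamma_{t}(t)) = u^{(n)}(\gamma_{t})$ by the representation \eqref{Time 2nd step equality}; and at $v = t+\delta \in [t_i, t_{i+1})$, the identity $\gamma_{t,t+\delta}(t_k) = \gamma_{t}(t)$ for $j < k \le i$ together with $\gamma_{t,t+\delta}(t+\delta) = \gamma_{t}(t)$ yields $\tilde{u}^{(n)}(t+\delta, \gamma_{t}(t_0), \ldots, \gamma_{t}(t_j)) = u_i(t+\delta, \gamma_{t,t+\delta}(t_0), \ldots, \gamma_{t,t+\delta}(t_i), \gamma_{t,t+\delta}(t+\delta)) = u^{(n)}(\gamma_{t, t+\delta})$. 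Summing $\int_{t_k \vee t}^{t_{k+1} \wedge (t+\delta)} \partial_v u_k\,dv$ over $k = j, \ldots, i$, each piece being a standard FTC identity, and telescoping via the continuity established above gives the claimed formula. The only real difficulty is the combinatorial bookkeeping of the argument slots, so that each terminal condition in \eqref{Time pde index k} precisely absorbs the shift from $u_k$ to $u_{k+1}$; once the slot counts $k - j + 1$ vs.\ $k - j + 2$ are correctly tracked, the identity reduces to a routine telescoping.
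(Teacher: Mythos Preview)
Your proof is correct and follows essentially the same approach as the paper's: apply the fundamental theorem of calculus to each $u_k$ on $[t_k,t_{k+1}]$, use the terminal conditions in \eqref{Time pde index k} to glue the pieces across partition points, and identify the endpoints via \eqref{Time 2nd step equality}. Your write-up makes the continuity of $\tilde u^{(n)}$ across partition points and the slot bookkeeping more explicit than the paper does, but the argument is the same.
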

\begin{proof}
  We use the differentiability of $u_{k}$ with respect to $y$ in \eqref{Time def of uLtilde} to obtain 
  $u_{k}(s, x_{0}, \cdots, x_{k}, y) - u_{k}(t_{k}, x_{0}, \cdots, x_{k}, y)=  \int_{t_{k}}^{s} \partial_{t}u_{k}(v, x_{0}, \cdots, x_{k}, y) \,dv$
  for each $k = 0, \cdots, n-1$ and $s \in [t_k, t_{k+1}]$.  
  Suppose $t_j = t$ and $t+\delta \in [t_k, t_{k+1})$ for some $j, k$.
Since the function $u_{i}$ for $ j \le i \le k-1$ satisfies the terminal condition of PDE \eqref{Time pde index k}, we have 
\begin{equation}
  \begin{aligned}
    & u^{(n)}(\gamma_{t, t+\delta}) - u^{(n)}(\gamma_{t})  \\
    = \; &u_{k}(t+\delta, \gamma_{t}(t_0), \cdots, \gamma_{t}(t_{j}), \gamma_{t}(t), \cdots ,  \gamma_{t}(t)) - u_{j}(t, \gamma_{t}(t_0), \cdots, \gamma_{t}(t_j), \gamma_{t}(t_j)) \\
    = \; & \int_{t_{k}}^{t+\delta} \partial_{t}u_{k}( v, \gamma_{t}(t_0), \cdots, \gamma_{t}(t_{j}), \gamma_{t}(t), \cdots ,  \gamma_{t}(t) )\,dv \\
                  &+ \sum_{i=j}^{k-1} \int_{t_{j}}^{t_{j+1}} \partial_{t}u_{i}(v, \gamma_{t}(t_0), \cdots, \gamma_{t}(t_{j}), \gamma_{t}(t), \cdots ,  \gamma_{t}(t)) \,dv \\
                   = \; &\int_{t_j}^{t+\delta} \partial_{t}\tilde{u}^{(n)}(v, \gamma_{t}(t_0), \cdots, \gamma_{t}(t_j))\,dv 
  \end{aligned}
\end{equation}

  The last equation is what we want.
\end{proof}

Now, we can proceed to the last equality in \eqref{Tim main idea}.

\begin{lemma}
  Let $\tilde{L}^{(n)}$, $\tilde{u}^{(n)}$ be as defined in the previous lemma. Suppose that all partition cover the discontinuous points of $\gamma_{t}$ and $t$. 
   For fixed $\delta > 0$, we have
  \begin{equation}
   \begin{aligned}
    \lim_{n \rightarrow \infty} \int_{t}^{t+\delta} -\tilde{L}^{(n)}(v, \gamma_{t}(t_0), \cdots, \gamma_{t}(t_{j(n)}))\tilde{u}^{(n)}(v, \gamma_{t}(t_0), \cdots, \gamma_{t}(t_{j(n)}))\,dv = - \int_{t}^{t+\delta}L(\gamma_{t, v})u_{v}(\gamma_{t, v})\,dv,
  \end{aligned}
\end{equation}
  where $t_{j(n)} \in \pi^{(n)}$ satisfy $t_{j(n)} = t$ for each $n \in \mathbb{N}$.
\end{lemma}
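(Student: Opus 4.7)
The plan is to establish pointwise convergence of the integrand for almost every $v \in [t,t+\delta]$ and then pass the limit through the integral by dominated convergence. Fix $v \in (t,t+\delta)$ not lying on any partition point; such $v$ form a set of full Lebesgue measure. For each large $n$ let $k_n \ge j(n)$ be the unique index with $v \in (t_{k_n},t_{k_n+1})$ in $\pi^{(n)}$, and abbreviate $\bar x := (\gamma_t(t_0),\ldots,\gamma_t(t_{j(n)}),\gamma_t(t),\ldots,\gamma_t(t))$ with $k_n-j(n)$ trailing copies of $\gamma_t(t)$. Writing $\partial_y$ for the derivative with respect to the last spatial argument of $u_{k_n}$, the definition \eqref{Time def of uLtilde} yields
\begin{equation}
\tilde L^{(n)}\tilde u^{(n)}(v,\gamma_t(t_0),\ldots,\gamma_t(t_{j(n)})) = b^{(n)}_{k_n}(v,\bar x)\,\partial_y u_{k_n}(v,\bar x,\gamma_t(t)) + \tfrac{1}{2}\bigl(\sigma^{(n)}_{k_n}\bigr)^2(v,\bar x)\,\partial_y^2 u_{k_n}(v,\bar x,\gamma_t(t)).
\end{equation}

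By \eqref{Time def of bnk} the coefficient equals $b^{(n)}_{k_n}(v,\bar x) = b_v(\varphi^n_L(\gamma_{t,v}))$, and since the partitions $\pi^{(n)}$ cover every discontinuity of $\gamma_t$ while the mesh tends to zero, $\varphi^n_L(\gamma_{t,v}) \to \gamma_{t,v}$ in $\|\cdot\|$; global Lipschitz continuity of $b$ then gives $b^{(n)}_{k_n}(v,\bar x) \to b_v(\gamma_{t,v})$, and similarly $(\sigma^{(n)}_{k_n})^2(v,\bar x) \to \sigma_v^2(\gamma_{t,v})$. For the spatial derivatives, the representation $u^{(n)}(\bar\gamma_{\bar v}) = u_{k_n}(\bar v,\bar\gamma_{\bar v}(t_0),\ldots,\bar\gamma_{\bar v}(t_{k_n}),\bar\gamma_{\bar v}(\bar v))$ valid on $[t_{k_n},t_{k_n+1})$ shows that perturbing $\bar\gamma_{\bar v}$ vertically at time $\bar v$ perturbs only the last argument of $u_{k_n}$; evaluating at $\bar\gamma_{\bar v} = \gamma_{t,v}$ therefore gives $\partial_y u_{k_n}(v,\bar x,\gamma_t(t)) = D_x u^{(n)}(\gamma_{t,v})$ and $\partial_y^2 u_{k_n}(v,\bar x,\gamma_t(t)) = D_{xx} u^{(n)}(\gamma_{t,v})$. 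By Lemma \ref{Time u^n Dxu^n} together with its extension to second-order derivatives, these converge to $D_x u_v(\gamma_{t,v})$ and $D_{xx} u_v(\gamma_{t,v})$, so the integrand converges pointwise a.e.\ to $L(\gamma_{t,v})u_v(\gamma_{t,v})$.

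To interchange limit and integral I would invoke dominated convergence. All paths $\gamma_{t,v}$ for $v\in[t,t+\delta]$ lie in $\{\eta:\|\eta\|\le\|\gamma_t\|\}$ and $\|\varphi^n_L(\gamma_{t,v})\|\le\|\gamma_{t,v}\|$, so the linear growth of $b$ and $\sigma$ bounds the coefficient factors uniformly in $n$ and $v$. The Lipschitz and linear-growth constants of $b\circ\varphi^n_L$, $\sigma\circ\varphi^n_L$ and of their Fr\'echet derivatives inherit those of $b,\sigma,Db,D\sigma,D^2b,D^2\sigma$, because $\varphi^n_L$ is a linear contraction on $D([0,T],\mathbb R)$; hence Lemma \ref{Fre Lem1} and the BSDE estimate of Theorem \ref{Ver thm1} deliver $n$-independent $\mathcal S^p$ bounds on $X^{n,\gamma_{t,v}},DX^{n,\gamma_{t,v}},D^2 X^{n,\gamma_{t,v}}$, and thereby on $|D_x u^{(n)}(\gamma_{t,v})|$ and $|D_{xx}u^{(n)}(\gamma_{t,v})|$ via the polynomial growth of $Dg$ and $D^2 g$. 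The main obstacle is exactly this uniform-in-$n$ control of the Fr\'echet derivatives of the approximated systems; once this uniformity is secured, pointwise convergence combined with dominated convergence yields the claimed identity.
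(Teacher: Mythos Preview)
Your proof is correct and follows essentially the same route as the paper: rewrite the integrand as $-b_v(\varphi^n_L(\gamma_{t,v}))D_xu^{(n)}(\gamma_{t,v})-\tfrac12\sigma_v^2(\varphi^n_L(\gamma_{t,v}))D_{xx}u^{(n)}(\gamma_{t,v})$, obtain pointwise convergence from Lemma~\ref{Time u^n Dxu^n} and the Lipschitz continuity of $b,\sigma$, then apply dominated convergence. You are actually more explicit than the paper about the domination step---the paper simply cites Lemma~\ref{Time u^n Dxu^n} together with linear growth of $b,\sigma$, whereas you spell out why the contraction property of $\varphi^n_L$ transfers the $n$-independent Lipschitz and growth constants to the approximated coefficients and hence to the $\mathcal S^p$ bounds on $X^{n,\gamma_{t,v}}$, $DX^{n,\gamma_{t,v}}$, $D^2X^{n,\gamma_{t,v}}$.
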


\begin{proof}
    Let $t_{k(n)} \in \pi^{(n)}$ satisfy $v \in [t_{k(n)}, t_{k(n)+1})$. From the definition of $\gamma_{t, v}$, we have 
    \begin{equation}
\begin{aligned}
  &u_{k(n)}(v,\gamma_{t}(t_0), \cdots, \gamma_{t}(t_{j(n)}), \gamma_{t}(t), \cdots, \gamma_{t}(t) ) \\
  = \; & u_{k(n)}(v, \gamma_{t, v}(t_{0}), \cdots, \gamma_{t, v}(t_{k(n)}), \gamma_{t, v}(v) ) = Y^{n, \gamma_{t, v}}(v) = u^{(n)}(\gamma_{t, v}).
\end{aligned}
\end{equation}
    Then, we can understand the partial derivatives as a vertical derivative of a solution $Y^{n, \gamma_{t, v}}(v)$, since it holds that
\begin{equation}
  \frac{\partial u_{k(n)}}{\partial x_{k(n)}}(v,\gamma_{t}(t_0), \cdots, \gamma_{t}(t_{j(n)}), \gamma_{t}(t), \cdots, \gamma_{t}(t) )
  =\lim_{h \rightarrow 0 } \frac{u^{(n)} (\gamma_{t, v}^{h}) - u^{(n)}(\gamma_{t, v}) }{h} = D_{x}u^{(n)}(\gamma_{t, v}).
\end{equation}
 Using \eqref{Time def of uLtilde} and \eqref{Time def of bnk} we have 
\begin{equation}\label{Tim lem9 eq1}
\begin{aligned}
  &-\tilde{L}^{(n)}(v, \gamma_{t}(t_0), \cdots, \gamma_{t}(t_{j(n)}))\tilde{u}^{(n)}(v, \gamma_{t}(t_0), \cdots, \gamma_{t}(t_{j(n)}))  \\
   = \; & - b_{v}( \varphi^{n}_{L}(\gamma_{t, v}) ) D_xu^{(n)}(\gamma_{t, v}) - \frac{1}{2}(\sigma_{v})^{2}(\varphi^{n}_{L}(\gamma_{t, v}))D_{xx}u^{(n)}(\gamma_{t, v}).
\end{aligned}
\end{equation}
    for $v \in [t, t+\delta]$.
Note that the $b_{v}( \varphi^{n}_{L}(\gamma_{t, v}) )$ converges to $b_v (\gamma_{t, v})$ in a sup-norm as $n \rightarrow \infty$. The case of $\sigma$ is same. 
 From Lemma~\ref{Time u^n Dxu^n} and linear growth of $b$ and $\sigma$, the dominating convergence theorem gives what we want.    
\end{proof}

Now, we are ready to prove Theorem~\ref{Time thm1}.
\begin{proof} [Proof of Theorem~\ref{Time thm1}]
   To conclude \eqref{Time aim}
   using the fundamental theorem of calculus from equations \eqref{Tim main idea}, we only need to show that $Lu_{s}(\gamma_{t, s})$ is continuous with respect to $s > t$. 
  Since the non-anticipative functionals $b$, $\sigma$ are functional Lipschitz continuous, it is sufficient to show the continuity of $D_{x}u_s(\gamma_{t,s})$ and $ D_{xx}u_s(\gamma_{t,s})$.
  We give the proof only in the case in which $D_{x}u_s(\gamma_{t,s})$; the other cases are similar. 
  For $s > t$, we have 
\begin{equation}
  \begin{aligned} \label{Tim ineq in pf of main thm}
    \left\lvert  D_{x}u_s(\gamma_{t,s}) - D_{x}u_t(\gamma_t) \right\rvert 
    &\le  \left\lvert   D_{x}Y^{\gamma_{t, s}}(s) - D_{x}Y^{\gamma_{t}}(s)  \right\rvert + \left\lvert D_{x}Y^{\gamma_{t}}(s) -  D_{x}Y^{\gamma_{t}}(t) \right\rvert.
  \end{aligned}
  \end{equation}
  Because $D_{x}Y^{\gamma_{t}}$ is the solution to BSDE \eqref{Ver dyn of DY in thm2}, and $D_{x}Y^{\gamma_{t}}(v)$ is continuous for $v \ge t$ almost everywhere. 
  Thus, the second term goes to $0$ as $s \rightarrow t+$. 
  The first term can be estimated as in the argument in $(iii)$ Lemma~\ref{Ver lem1} from the functional Lipschitz continuity of $Dg$.
\end{proof}

We have proven the existence of the vertical and horizontal derivatives of non-anticipative functional $u$. Now, we describe the proof of Theorem~\ref{main theorem}. 

\begin{proof} [Proof of Theorem~\ref{main theorem}] \label{proof of main theorem}
From the argument in Section~\ref{sec:2}, recall that we may regard the non-anticipative function $F$ as an operator $\tilde{F}$ from $\tilde{D}$ to $\mathbb{R}$. Conversely, the operator $\tilde{F}$ from $\tilde{D}$ to $\mathbb{R}$ can be regard as a non-anticipative function by $F(t, \gamma) = \tilde{F}(\gamma^{t})$. Therefore, we can regard $u$ defined in \eqref{main def of u} as a non-anticipative functional. 

The existences of $D_{x}u$, $D_{xx}u$, $D_{t}u$ are derived from Theorems~\ref{Ver thm2} and \ref{Time thm1}. We now turn to the converse direction. The operator $u : \tilde{D} \rightarrow \mathbb{R}$ corresponds to the non-anticipative functional $u : [0, T] \times D([0, T], \mathbb{R}) \rightarrow \mathbb{R}$ as $\tilde{u}(t, \gamma) = u(\gamma)$ for $\gamma \in D([0, t], \mathbb{R})$.
  From the functional It\^{o} formula, we have 
$d\tilde{u}(s, X^{\gamma_{t}}_{T}) = D_{x}\tilde{u}(s, X^{\gamma_{t}}_{s}) \sigma_{s}(X^{\gamma_{t}}) \,dW_s$.
Integrating both sides on $[t, T]$, we obtain 
\begin{equation}
\tilde{u}(t, X^{\gamma_{t}}_{t}) = \tilde{u}(T, X^{\gamma_{t}}_{T}) -\int_{t}^{T} D_{x}\tilde{u}(s, X^{\gamma_{t}}_{s}) \sigma_{s}(X^{\gamma_{t}}) \,dW_s
\end{equation}
with $X^{\gamma_{t}}_T \in D([0, t], \mathbb{R})$. From the terminal condition of \eqref{Tim ppde in thm1}, we know that $\tilde{u}(T, X^{\gamma_{t}}_{T}) = g(X^{\gamma_{t}})$. Since the solution to the BSDE is unique, we obtain the desired result.
\end{proof}
Finally, we prove Proposition~\ref{greek value prop1}, which states the self-financing portfolio of options.
\begin{proof}[Proof of Proposition~\ref{greek value prop1}]
  We apply the It\^{o} formula to $v(X_t)$. Since $u_t(X_t)$ is a solution to PPDE \eqref{main PPDE_option}, we have
  \begin{equation}
  \begin{aligned}
    du_t(X_t)  
             & = D_{x}u_t(X_t)\sigma_{t}(x)\,dW_{t} = -D_{x}u_t(X_t) (r - q_{t}(X)) \,dt + \frac{D_{x}u_t(X_t)}{S_{t}} \,dS_{t}
  \end{aligned}
\end{equation}
  using the definition of $S_{t}$. Thus, we obtain that
\begin{equation}
  \begin{aligned}
    dv_t(X_t) = rv_{t}(X_t) - D_{x}u_t(X_t)(r - q_t(X)) e^{-r(T-t)}  + \frac{D_{x}u_t(X_t)}{S_{t}}e^{-r(T-t)} \,dS_{t}.
  \end{aligned}
  \end{equation}
  This means that $v_t(X_t)$ is a self-financing portfolio with initial value $u_0(X_0)e^{-rT}$ with position $ \frac{D_{x}u_t(X_t)}{S_{t}}e^{-r(T-t)}$. Finally, using the martingale property of $u_t(X_t)$, we obtain $\mathbb{E}[u_T(X_T)] = u_0(X_0)$.
\end{proof}

\section{Greeks}\label{Appendix:Greeks}
In this section, we prove the propositions and theorem in Section~\ref{subsec:G}. 
We follow the setting in Section~\ref{subsec:G}.
Under the risk-neutral setting, we define the stock model $S_t$ and logarithm $X_t$ described in Section~\ref{subsec:PSDE}. 
We consider families of coefficients  $(r^{\epsilon})_{\epsilon \in I},$ 
$(q_\cdot^{\epsilon})_{\epsilon \in I}$ and $(\sigma_\cdot^{\epsilon})_{\epsilon \in I}$
where $I=(-1,1)$
and $\epsilon$ is the perturbation parameter.
For convenience, we define
$ b^{\epsilon}_t :=r^{\epsilon } -q_t^\epsilon- \frac{1}{2}(\sigma^{\epsilon}_t)^{2}.$
We also assume that families of coefficient $b^{\epsilon}$ and $\sigma^{\epsilon}$ satisfy Assumption~\ref{greek assum}.

First, we mention that $X$ and $X^{\epsilon}$ are actually close in $S^{p}$ space.
\begin{lemma} \label{greeks_app lem1}
  For $\epsilon$ sufficiently close to $0$ and $p \ge 2$, we have
  \begin{align}
    \mathbb{E}\Big[  \sup_{u \in [0, T]}\left\lvert X(u) - X^{\epsilon}(u) \right\rvert^{p}   \Big] \le C \epsilon^{p}.
  \end{align}
\end{lemma}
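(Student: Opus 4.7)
The plan is to write $X - X^{\epsilon}$ as the difference of the two SDEs and bound it via the Burkholder--Davis--Gundy (BDG) inequality together with a Gr\"{o}nwall argument, where the crucial step is to split the coefficient difference into a path-difference part (handled by Lipschitz continuity uniform in $\epsilon$) and a parameter-perturbation part (handled by the $\dot{\psi}$-expansion in Assumption~\ref{greek assum}(i)).

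Concretely, I would first write, for each $t\in[0,T]$,
\begin{align}
X(t)-X^{\epsilon}(t)=\int_{0}^{t}\bigl(b_s(X)-b^{\epsilon}_s(X^{\epsilon})\bigr)\,ds+\int_{0}^{t}\bigl(\sigma_s(X)-\sigma^{\epsilon}_s(X^{\epsilon})\bigr)\,dW_s,
\end{align}
and split, for $\psi\in\{b,\sigma\}$,
\begin{align}
\psi_s(X)-\psi^{\epsilon}_s(X^{\epsilon})=\bigl(\psi_s(X)-\psi_s(X^{\epsilon})\bigr)+\bigl(\psi_s(X^{\epsilon})-\psi^{\epsilon}_s(X^{\epsilon})\bigr).
\end{align}
The first bracket is bounded by $K\|X-X^{\epsilon}\|_s$ using the functional Lipschitz continuity of $\psi$, while the second bracket is bounded, via Assumption~\ref{greek assum}(i), by $\epsilon\bigl(|\dot{\psi}_s(X^{\epsilon})|+\phi(X^{\epsilon},\epsilon)\|X^{\epsilon}\|_s^{k}\bigr)$.

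Next, I would apply the BDG inequality to the stochastic integral and a Jensen-type bound to the drift, obtaining
\begin{align}
\mathbb{E}\Big[\sup_{u\le t}|X(u)-X^{\epsilon}(u)|^{p}\Big]\le C\int_{0}^{t}\mathbb{E}\Big[\sup_{r\le s}|X(r)-X^{\epsilon}(r)|^{p}\Big]\,ds+C\,\epsilon^{p}\,R(t),
\end{align}
where $R(t)$ collects $\mathbb{E}[\,|\dot{b}_s(X^{\epsilon})|^{p}+|\dot{\sigma}_s(X^{\epsilon})|^{p}+\phi(X^{\epsilon},\epsilon)^{p}\|X^{\epsilon}\|^{kp}\,]$ integrated in $s$. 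The uniform-in-$\epsilon$ Lipschitz and linear growth bounds from Assumption~\ref{greek assum}(ii) yield moment estimates $\sup_{\epsilon}\mathbb{E}[\|X^{\epsilon}\|^{q}]<\infty$ for every $q\ge 2$ by the standard Gr\"{o}nwall/BDG argument for SDEs, and the local functional Lipschitz continuity of $\dot{b},\dot{\sigma}$ in Assumption~\ref{greek assum}(iii) together with the continuity $\phi(\cdot,0)=0$ ensures $R(t)$ is bounded uniformly in small $\epsilon$. Gr\"{o}nwall's inequality then closes the estimate and delivers the claimed $C\epsilon^{p}$ bound.

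The main obstacle is that the inequality \eqref{greek diff ineq} is only \emph{local}: it holds for paths $\eta$ in a neighborhood of some reference path $\eta_0$, with both the radius $\delta(\eta_0)$ and the function $\phi$ depending on $\eta_0$. To use it along the random trajectory $X^{\epsilon}$, I would employ a standard localization: introduce stopping times $\tau_N=\inf\{t:\|X^{\epsilon}\|_t+\|X\|_t\ge N\}\wedge T$, carry out the estimates on $[0,t\wedge\tau_N]$ (where the paths lie in a compact set and a finite cover by Assumption~\ref{greek assum}(i)-neighborhoods makes $\phi(\cdot,\epsilon)$ uniformly small as $\epsilon\to 0$), and then remove the localization using the uniform moment bounds on $X^{\epsilon}$. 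This localization, combined with the continuity of $\phi$ in $\epsilon$, is the only delicate point; the remaining ingredients are routine BDG and Gr\"{o}nwall manipulations already used throughout Appendix~\ref{Appendix:Fre}.
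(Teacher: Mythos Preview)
Your proposal is correct and follows the standard BDG--Gr\"{o}nwall route; this is essentially the same argument the paper invokes, since the paper gives no details and simply defers to Theorem~B.3 of \cite{fournie2010functional}, which proceeds exactly by splitting the coefficient difference into a Lipschitz-in-path part and an $\epsilon$-perturbation part, then closing via Gr\"{o}nwall. Your write-up is in fact more explicit than the paper's own treatment, and the localization issue you flag (the inequality in Assumption~\ref{greek assum}(i) being only local in $\eta$) is a genuine subtlety that neither the paper nor the cited reference addresses carefully---though note that a finite-cover argument in the infinite-dimensional space $D([0,T],\mathbb{R})$ is not immediate, so one would typically strengthen the assumption to a global bound or argue via the $\epsilon$-uniform moment estimates on $X^{\epsilon}$ together with the continuity of $\phi$ more directly.
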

The proof is the same as in Theorem~B.3 of \cite{fournie2010functional}. Now, let us prove Proposition~\ref{greek value prop1}.

\begin{proof} [Proof of Proposition~\ref{greek value prop2}]
  Applying the functional It\^{o} formula Theorem~\ref{ftnl Ito formula} to $u_t(X^{\epsilon}_t)$, we obtain
  \begin{equation}
  \begin{aligned}
    &du_t(X^{\epsilon}_t) = \big(  D_{x}u_t(X^{\epsilon}_t)(b^{\epsilon}_{t}(X^{\epsilon}) - b_{t}(X^{\epsilon}) ) + \frac{1}{2}((\sigma^{\epsilon}_{t})^{2}(X^{\epsilon}) - \sigma^{2}_{t}(X^{\epsilon})) D_{xx}u_t(X^{\epsilon}_t) \big)\,dt + D_{x}u_t(X^{\epsilon}_t)\sigma^{\epsilon}_{t}(X^{\epsilon})\,dW_{t}.
  \end{aligned}
\end{equation}
  We use the fact that $u$ is the solution to PPDE \eqref{main PPDE} in the third equality. Therefore, we obtain
  \begin{equation}
  \begin{aligned}
    dG^{\epsilon}(t)  = D_{x}u_t(X^{\epsilon}_t)\sigma^{\epsilon}_{t}(X^{\epsilon})\,dW_{t} = - (r^{\epsilon} - q^{\epsilon}_t(X^{\epsilon}))D_{x}u_t(X^{\epsilon}_t)\,dt + \frac{D_{x}u_t(X^{\epsilon}_t)}{S^{\epsilon}_{t}} \,dS^{\epsilon}_{t}.
  \end{aligned}
  \end{equation}
  Therefore, as in the proof of Proposition~\ref{greek value prop1}, we obtain what we want.
\end{proof}

Now, for each $\epsilon>0$, define  $A_t = G^{\epsilon}(t) - u_t(X^{\epsilon}_t)$
Then, for each $\epsilon > 0$, we get equalities from the martingale property of $u_t(X_t)$ and $G^{\epsilon}$
\begin{equation}
\begin{aligned}
  &v^{\epsilon}_0(X_{0}) - v_0(X_0) = \mathbb{E}[ u_T(X^{\epsilon}_T)e^{-r^{\epsilon}T} - u_T(X_T)e^{-rT}] \\
  = \; & \mathbb{E}[G^{\epsilon}(T) - A_T]e^{-r^{\epsilon}T} - u_0(X_{0})e^{-rT}  = -\mathbb{E}[A_T]e^{-r^{\epsilon}T} + u_0(X_{0}) ( e^{-r^{\epsilon}T}  - e^{-rT}).
\end{aligned}
\end{equation}
Using the above argument, we prove the sensitivity formula of Theorem~\ref{greek formula thm}.

\begin{proof} [Proof of Theorem~\ref{greek formula thm}]
  We have shown that 
  \begin{equation}
  \begin{aligned}
    \lim_{\epsilon \rightarrow 0} \frac{1}{\epsilon} (v^{\epsilon}_0(X_{0}) - v_0(X_0) )
    = \lim_{\epsilon \rightarrow 0}  \Big(  u_0(X_{0})\frac{e^{-r^{\epsilon}T} - e^{-rT}}{\epsilon}  - \mathbb{E}[\frac{A_T}{\epsilon}]e^{-r^{\epsilon}T} \Big).
  \end{aligned}
  \end{equation}
  Note that for $\psi = b, \sigma, q$, we have $\psi^{\epsilon}(\eta) \rightarrow \psi(\eta)$ and $\frac{\psi^{\epsilon}_{t}(\eta) - \psi_{t}(\eta)}{\epsilon} \rightarrow \dot{\psi}(\eta)$ as $\epsilon \rightarrow 0$ for each $\eta \in D([0, T], \mathbb{R})$. Then, $\dot{r}$ comes from the formula of $r$ and $r^{\epsilon}$ as $r = b_{t} + \frac{1}{2}\sigma_{t}^{2} + q_t, r^{\epsilon} = b^{\epsilon}_{t} + \frac{1}{2}(\sigma^{\epsilon}_{t})^{2} + q^{\epsilon}_t. $
  Thus, we obtain that $\dot{r} = \dot{b}_{t} + \dot{\sigma}_{t}\sigma_{t} + \dot{q}_t$.
  
  For the term $\mathbb{E}[A_T/\epsilon]$, we sequentially use the triangle inequality and apply the Lebesgue dominating convergence theorem to each term. In the process, we use Theorem~\ref{Ver thm1}, Lemma~\ref{greeks_app lem1} and the Lipschitz continuity of $b$ and $\sigma$. 

  The fractional part in $\mathbb{E}[A_T/\epsilon]$ can be regarded as differential by showing
  $ \mathbb{E}[  \int_{0}^{T} \lvert D_{x}u_{s}(X^{\epsilon}_s) ( (b^{\epsilon}_{s}(X^{\epsilon}) - b_{s}(X^{\epsilon}))/ \epsilon - \dot{b}_{s}(X^{\epsilon}) ) \rvert  \, ds  ] $ goes to $0$. Then, we can omit $\epsilon$ of $X$  as  
$\mathbb{E} [  \int_{0}^{T} \lvert D_{x}u_{s}(X^{\epsilon}_s) ( \dot{b}_{s}(X^{\epsilon}) - \dot{b}_{s}(X) ) \rvert \, ds ]$
and  $\mathbb{E}[\int_{0}^{T} \lvert (D_{x}u_{s}(X^{\epsilon}_s) - D_{x}u_s(X_s)) \dot{b}_{s}(X) \rvert \,ds ]$ converges to 0.
Consequently, we obtain that
  \begin{align}
     \mathbb{E}\left[  \int_{0}^{T} \left\lvert D_{x}u_{s}(X^{\epsilon}_s) \left( \frac{b^{\epsilon}_{s}(X^{\epsilon}) - b_{s}(X^{\epsilon}) }{\epsilon} \right)  - D_{x}u_s(X_s) \dot{b}_{s}(X) \right\rvert \,ds \right] \rightarrow 0, \quad \textnormal{as} \quad \epsilon \rightarrow 0.
  \end{align}
  Similarly, we can estimate the remaining term of $\sigma$. Therefore, we conclude that 
  \begin{align}
    \lim_{\epsilon \rightarrow 0} \mathbb{E}\left[ \frac{A_T}{\epsilon} \right]  e^{-r^{\epsilon}T} =  \mathbb{E}\left[\int_{0}^{T} D_{x}u_s(X_s) \dot{b}_{s}(X) +  D_{xx}u_s(X_s) \dot{\sigma}_{s}(X)\sigma_{s}(X) \,ds\right]e^{-rT}.
  \end{align}
\end{proof}

\end{appendices}

%
%

\bibliographystyle{spbasic}      
\bibliography{./Option_pricing_under_path-dependent_stock_models.bib}   

\end{document}